\theoremstyle{plain}
\newtheorem{theorem}{Theorem}[section]
\newtheorem{lemma}[theorem]{Lemma}
\newtheorem{proposition}[theorem]{Proposition}
\newtheorem{corollary}[theorem]{Corollary}
\theoremstyle{remark}
\newtheorem{example}[theorem]{Example}
\theoremstyle{remark}
\newtheorem{remark}[theorem]{Remark}
\theoremstyle{definition}
\newtheorem{definition}[theorem]{Definition}
\numberwithin{equation}{section}
\def\H{\mathcal{O}}
\def\I{\mathcal{P}}
\newcommand{\N}{\mathbb{N}}
\newcommand{\0}{\mathbb{N}_{0}}
\renewcommand{\k}{\Xi}
\newcommand{\h}{\Theta}
\newcommand{\K}{K}
\begin{document}
\title[Alternating and Variable Controls]
{Alternating and Variable Controls\\ for the Wave Equation}

\author{Antonio Agresti}
\author{Daniele Andreucci}
\author{Paola Loreti}

\thanks{The original publication is available at \url{www.esaim-cocv.org}. The copyright of this article belongs to ESAIM-COCV}
\thanks{
The second author is member of Italian G.N.F.M.-I.N.d.A.M.}

\address{Dipartimento di Scienze di Base e Applicate per l'Ingegneria\\
Sapienza Universit\`a di Roma\\
via A.Scarpa 16, 00161 Roma, Italy}
\curraddr{Dipartimento di Matematica Guido Castelnuovo\\
Sapienza Universit\`a di Roma\\
P.le A. Moro 2, 00100 Roma, Italy}
\address{Dipartimento di Scienze di Base e Applicate per l'Ingegneria\\
Sapienza Universit\`a di Roma\\
via A.Scarpa 16, 00161 Roma, Italy}
\address{Dipartimento di Scienze di Base e Applicate per l'Ingegneria\\
Sapienza Universit\`a di Roma\\
via A.Scarpa 16, 00161 Roma, Italy}

\begin{abstract}
The present article discusses the \textit{exact observability} of the wave equation when the observation subset of the boundary is variable in time. In the one-dimensional case, we prove an equivalent condition for the exact observability, which takes into account only the location in time of the observation. To this end we use Fourier series. Then we investigate the two specific cases of single exchange of the control position, and of exchange at a constant rate.
In the multi-dimensional case, we analyse sufficient conditions for the exact observability relying on the multiplier method.
In the last section, the multi-dimensional results are applied to specific settings and some connections between the one and multi-dimensional case are discussed; furthermore some open problems are presented.
\end{abstract}

\maketitle

\section{Introduction and formulation of the main results.}
\label{s:introduction}
In this work we study exact observability for the wave equation with Dirichlet boundary condition, i.e.
\begin{align}
\label{omo1}
u_{tt} - \Delta u&=0\,, &\qquad& (x,t)\in \Omega\times(0,T)\,,\\
u(x,t)&=0\,,&\qquad& (x,t)\in\partial \Omega \times (0,T)\,,\\
u(x,0)&=u_0(x)\,,&\qquad& x\in \Omega\,,\\
\label{omo2}
u_t(x,0)&=u_1(x)\,,&\qquad& x\in \Omega\,;
\end{align}
when the subset of observation is not (in general) of the form $\Gamma\times(0,T)\subset \partial \Omega \times (0,T)$ for a fixed $\Gamma\subset \partial\Omega$.\\
Here $\Delta$ is the Laplace operator and $\Omega$ is a domain (i.e., an open, connected and bounded set) in $\mathbb{R}^d$, which we consider of class $C^2$ \textit{or} convex (for more information see \cite{Grisvard}).

For the reader's convenience, we have gathered in Section \ref{s:prelim} some known results, and some extensions we need, on basic existence and regularity theory for the problem above.
In the following the trace of the normal derivative of weak solutions will be understood in the sense of Theorem \ref{t:hidden}.

Since the treatment of the observability problem (see below for a definition or \cite{Fourier}) is different in the one dimensional and in the multi-dimensional case, due to the different topology of the subset of observation, we split the discussion into two subsections.
\subsection{One-dimensional case ($d=1$)}
\label{s:alternating}
In this case, w.l.o.g. we will consider $\Omega=(0,\pi)$. We are interested in studying the controllability property of the one dimensional wave equation with Dirichlet boundary condition (e.g., by a control at $x=0$). It is known (see for instance \cite{multiplier} or \cite{Lions}) that the exact controllability is equivalent to the observability inequalities such as
\begin{equation}
\label{observability classica}
\int_0^T |u_x (0,t)|^2\,dt \asymp  E_0\,.
\end{equation}
Here and in the rest of the paper the symbol $A \asymp B$  means that $c^{-1}A<B<cA$ for some positive real $c$; where $u$ solves (\ref{omo1})-(\ref{omo2}) and $E_0$ is the energy of the initial data $(u_0,u_1)\in H^1_0(0,\pi)\times L^2(0,\pi)$; see (\ref{energy}) below. It is also clear that the observability inequality (\ref{observability classical}) remains valid if $u_x(0,t)$ is replaced by $u_x(\pi,t)$, which provides the controllability for a system controlled at $x=\pi$.\\
It is well known that (\ref{observability classical}) holds for any $T\geq 2\pi$, as a consequence of the Parseval equality, for a proof see \cite{Fourier}. Moreover, if the control is actuated on both the endpoints, the optimal control time $T$ reduces to $\pi$.\\
Although both the inequalities require an investigation, for the study we are going to explain the  so called {\em{direct observability inequality}}  follows by  hidden regularity property (see Theorem \ref{t:hidden}):

For any $T>0$ and $(u_0,u_1)\in H^1_0(\Omega)\times L^2(\Omega)$, the map
\begin{align*}
H^1_0(\Omega)\times L^2(\Omega) &\rightarrow L^2(\partial \Omega\times (0,T))\,,\\
(u_0,u_1)&\mapsto \partial_{\nu} u\,,
\end{align*}
is well defined, continuous and coincides with the classical trace (in the sense of Sobolev spaces) if $(u_0,u_1)\in (H^2(\Omega)\cap H^1_0(\Omega)) \times H^1_0(\Omega)$.

In the paper, we concentrate our analysis on the { \em{inverse observability inequality}} 
 \begin{equation}
\label{observability classical}
\int_0^T |u_x (0,t)|^2\,dt \geq c  E_0\,.
\end{equation}
For brevity, from now the inequality \eqref{observability classical} will be 
called \textit{observability inequality}.\\
\\
In this work  we want to generalize the situation of having the observability set as $\{0\}\times (0,T)$ or $\{\pi\}\times (0,T)$; that is we want to alternate the spatial controllability point still maintaining the observability inequality true, for an optimal control time. We are not aware of specific contributions in this sense with the exception the recent paper \cite{Variable}, which mostly treats bulk control, but touches the variable boundary control; however the methods of \cite{Variable} are completely different from ours. The motivation for this generalization came from typical engineering problems (see \cite{Carcaterra2}) in which the control sets have to change in time, so that, also the observability set has to change in time.  

Consider an arbitrary family of consecutive disjoint open intervals $\{I_k\,:\,k\in \mathcal{K}\}$ of $(0,\infty)$ with $\mathcal{K}\subset \0$ (possibly finite or infinite); we define
the \textit{observability sets} 
\begin{equation}
\label{def:F}
\mathcal{F}:=\{\{\lambda_k\}\times I_k\,:\,k\in\mathcal{K}\}\,,
\end{equation}
where $\lambda_k=0$ if $k$ is even, otherwise $\lambda_k=\pi$. Without loss of generality we can suppose $\mathcal{K}=\{0,\dots,N\}$ if $\mathcal{K}$ is finite or $\mathcal{K}=\0$ otherwise; furthermore we set $(t_{k-1},t_k):= I_k$ and $t_{-1}=0$.\\
We want to investigate whether the following property is satisfied:
\begin{definition}[$\mathcal{F}$-observability]
\label{def:observability1}
We say that the system (\ref{omo1})-(\ref{omo2}) is $\mathcal{F}$-observable in time $T>0$ if the inverse observability inequality holds true, that is
\begin{equation}
\label{observability0}
\sum_{k=0}^n \int_{I_k\cap (0,T)} |u_x(\lambda_k,t)|^2\, dt \geq c\,E_0\,,
\end{equation} 
for some $n\in\N$ and $c>0$ independent of $(u_0,u_1)\in H^1_0(0,\pi)\times L^2(0,\pi)$.
As above, $u$ is the weak solution to $(\ref{omo1})-(\ref{omo2})$ with initial data $(u_0,u_1)$.\\
We shall say that the system (\ref{omo1})-(\ref{omo2}) is $\mathcal{F}$-observable, if the system is $\mathcal{F}$-observable for some time $T>0$.
\end{definition}

In applications we are interested in knowing the least time $T$ such that the $\mathcal{F}$-observability property holds; we make this more precise in the following definition.
\begin{definition}[Optimal control time]
\label{defTopt}
Suppose that the system (\ref{omo1})-(\ref{omo2}) is $\mathcal{F}$-observable in time $T$, but it is not $\mathcal{F}$-observable in any time $T'<T$. Then $T_{opt}:=T$ will be called the \textit{optimal control time}.
\end{definition}
In the following the family of intervals $\{I_k:k\in \mathcal{K}\}$ is given. In order to state our result on the observability property, we have to introduce some notations. Let $r$ be the quotient map
\begin{equation*}
r: \mathbb{R} \rightarrow \mathbb{R}/(2\pi \mathbb{Z}) \approx S^1\,.
\end{equation*}
Lastly we define
\begin{align}
\label{condition}
\tilde{I}_k=\{ r(t)\,:\,t\in I_k-\lambda_k\}\,,
 \qquad k\in \0\,,
\end{align}
where as above, $\lambda_k=0$ if $k\in 2\0$, $\lambda_k=\pi$ otherwise. We are ready to state our main Theorem.
\begin{theorem}
\label{general theorem}
Let $n\in \0$. The system (\ref{omo1})-(\ref{omo2}) is $\mathcal{F}$-observable in time $T=\sup(I_n)$ if and only if
\begin{equation}
\label{general condition}
\Big| S^1 \setminus \bigcup_{k=0}^n \tilde{I}_k \Big| =0\,.
\end{equation}
Furthermore, let $h$ be the least index $n$ in $\mathcal{K}$ for which the condition (\ref{general condition}) holds, 
then the optimal control time is the infimum over the values $t$ for which 
\begin{equation*}
\Big| S^1 \setminus \left(\bigcup_{k=0}^{h-1} \tilde{I}_k\cup r((t_{h-1}-\lambda_h,t-\lambda_h) )\right) \Big| =0\,.
\end{equation*}
\end{theorem}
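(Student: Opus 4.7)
The plan is to Fourier-expand the solution and reduce (\ref{observability0}) to an equivalent statement on the circle $S^1$. Writing $u(x,t)=\sum_{k\ge 1}(a_k\cos kt+b_k\sin kt)\sin kx$, one has $E_0\asymp\sum_{k\ge 1} k^2(a_k^2+b_k^2)$. Introduce the $2\pi$-periodic function
\[
f(t):=\sum_{k\ge 1}k\bigl(a_k\cos kt+b_k\sin kt\bigr);
\]
the map $(u_0,u_1)\mapsto f$ is a bicontinuous bijection between $H^1_0(0,\pi)\times L^2(0,\pi)$ and the mean-zero $L^2(S^1)$, and Parseval gives $\|f\|_{L^2(S^1)}^2\asymp E_0$. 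Because $\cos k(t-\pi)=(-1)^k\cos kt$, the boundary traces satisfy the clean identity $u_x(\lambda_k,t)=f(t-\lambda_k)$, so that by translation and $2\pi$-periodicity
\[
\int_{I_k}|u_x(\lambda_k,t)|^2\,dt=\int_{I_k-\lambda_k}|f(s)|^2\,ds\ge\int_{\tilde I_k}|f|^2\,d\mu_{S^1}.
\]

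For the sufficient direction, assuming (\ref{general condition}) holds, I sum over $k\le n$ and use that $\bigcup\tilde I_k$ covers $S^1$ up to a null set:
\[
\sum_{k=0}^n\int_{I_k}|u_x(\lambda_k,t)|^2\,dt\ge\int_{S^1}|f|^2\,d\mu_{S^1}=\|f\|_{L^2(S^1)}^2\asymp E_0,
\]
which is exactly (\ref{observability0}) at $T=\sup I_n$.

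For the converse, assume $B:=S^1\setminus\bigcup_{k=0}^n\tilde I_k$ has positive measure. For each small $\alpha>0$, choose $A\subset B$ with $|A|=\alpha$ and set $f_\alpha:=\chi_A-\alpha/(2\pi)$; this lies in $L^2_0(S^1)$ and so corresponds to admissible initial data. On $\bigcup\tilde I_k$ we have $f_\alpha\equiv -\alpha/(2\pi)$, hence
\[
\sum_{k=0}^n\int_{I_k}|u_x(\lambda_k,t)|^2\,dt=\sum_{k=0}^n\int_{I_k-\lambda_k}|f_\alpha|^2\,ds\le M\cdot\frac{\alpha^2}{2\pi},
\]
where $M$ is the finite multiplicity of the covering $\bigsqcup_k(I_k-\lambda_k)\to S^1$, bounded uniformly in $\alpha$ because there are only $n+1$ intervals of finite total length. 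Since $\|f_\alpha\|_{L^2(S^1)}^2=\alpha(1-\alpha/(2\pi))\asymp\alpha$, the ratio with $E_0$ tends to $0$ as $\alpha\downarrow 0$, contradicting (\ref{observability0}).

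For the optimal-time statement, the same equivalence applies verbatim with $I_h$ replaced by the truncated interval $(t_{h-1},t)$ for $t\in(t_{h-1},t_h]$: observability at time $t$ holds iff
\[
\Big|S^1\setminus\Big(\bigcup_{k=0}^{h-1}\tilde I_k\cup r\bigl((t_{h-1}-\lambda_h,t-\lambda_h)\bigr)\Big)\Big|=0,
\]
while the minimality of $h$ rules out any $T\le t_{h-1}$, so $T_{opt}$ is the stated infimum. The only technical care needed is (a) the precise bicontinuous identification $f\leftrightarrow(u_0,u_1)$, which is routine from Parseval since $ka_k,kb_k\in\ell^2$ is exactly the condition $f\in L^2_0(S^1)$, and (b) the uniform bound on the multiplicity $M$; neither is a serious obstacle, and the heart of the theorem is really the algebraic identity $u_x(\lambda_k,t)=f(t-\lambda_k)$ that makes the problem periodic.
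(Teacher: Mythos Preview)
Your proof is correct and the sufficiency direction is essentially identical to the paper's: both hinge on the identity $u_x(\pi,t)=u_x(0,t-\pi)$ (your $u_x(\lambda_k,t)=f(t-\lambda_k)$), a change of variable, and Parseval on $S^1$.

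The necessity direction, however, is argued differently. The paper observes that since $\bigcup_{k\le n}\tilde I_k$ is a \emph{finite} union of open arcs, its complement, if of positive measure, contains an open arc $U$; it then proves a separate lemma (Lemma~\ref{l:lemma proof general condition}) constructing, by explicit quadrature, a single pair $(\tilde u_0,\tilde u_1)\in H^1_0\times L^2$ with $\operatorname{supp}u_x(0,\cdot)\subset U$, giving an exact zero on the left of \eqref{observability0}. You instead exploit the Fourier-level bijection $(u_0,u_1)\leftrightarrow f\in L^2_0(S^1)$ directly and produce a \emph{family} $f_\alpha=\chi_A-\alpha/(2\pi)$ whose observability ratio is $O(\alpha)\to 0$. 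Your route is a little slicker: it avoids the auxiliary lemma, does not use that the complement has nonempty interior (so it would survive verbatim if one allowed infinitely many $\tilde I_k$ with a fat-Cantor complement), and the bijection you invoke is an immediate Parseval computation. The paper's route, on the other hand, yields a single explicit counterexample and makes the inversion $f\mapsto(u_0,u_1)$ concrete via the formulas $\tilde u_0'(t)=\tfrac12(\psi(t)+\psi(2\pi-t))$, $\tilde u_1(t)=\tfrac12(\psi(t)-\psi(2\pi-t))$.

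One cosmetic point: your bound $M\cdot\alpha^2/(2\pi)$ is really $\bigl(\sum_{k\le n}|I_k|\bigr)\cdot\alpha^2/(4\pi^2)=t_n\,\alpha^2/(4\pi^2)$, since $f_\alpha\equiv-\alpha/(2\pi)$ identically on each $I_k-\lambda_k$; calling the prefactor a ``multiplicity'' is slightly loose, but the estimate and the conclusion are unaffected.
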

If $\mathcal{K}=\{0\}$, then the family of subsets considered in Theorem \ref{general theorem} contains a single interval. In this case the content of Theorem \ref{general theorem} reduces to the one point observation result discussed at the beginning of the subsection. Again, for a complete proof in this special case we refer to \cite{Fourier}. Finally, we note that in this case, according to Definition \ref{defTopt}, the optimal control time is $T_{opt}=2\pi$.\\
Interestingly, one has the following refinement of Theorem \ref{general theorem}.
\begin{corollary}
\label{corollaryJ} 
If the condition (\ref{general condition}) holds, then we can choose a family $\{\tilde{J}_k :k=0,\dots,n\}$ of disjoint subsets of $S^1$ (possibly empty), such that $\tilde{J}_k \subset \tilde{I}_k$ and
\begin{equation}
\label{c:eq corollaryJ}
\Big|S^1 \setminus \bigcup_{k=0}^n \tilde{J}_k \Big|= 0\,. 
\end{equation}
Furthermore, denote by $I'_k$ the open subset of $I_k$, such that $\tilde{J}_k =\{r(t)\,:\,t	\in I'_k- \lambda_k\} \subset \tilde{I}_k $; then the system is $\mathcal{F'}$-observable, with  
\begin{equation*}
\mathcal{F'}=\{\{\lambda_k\}\times I'_k\,:\,k\in\mathcal{K}\}\,,
\end{equation*}
and
\begin{equation*}
\sum_{k=0}^n |I'_k| =2\pi\,.
\end{equation*}
\end{corollary}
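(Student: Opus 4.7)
The plan rests on a disjointification on $S^1$, a lift back to the time axis, and a direct appeal to Theorem \ref{general theorem}. I set $\tilde{J}_0 := \tilde{I}_0$ and for $1 \leq k \leq n$,
\[
\tilde{J}_k := \tilde{I}_k \setminus \bigcup_{j=0}^{k-1} \overline{\tilde{I}_j}.
\]
Taking closures (rather than the raw sets $\tilde{I}_j$) is deliberate: it makes each $\tilde{J}_k$ open, as an intersection of open sets. Since each $\tilde{I}_j$ is either all of $S^1$ or the image of an open interval under the local homeomorphism $r$, hence an open arc, the boundary $\overline{\tilde{I}_j}\setminus \tilde{I}_j$ consists of at most two points. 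Consequently $\bigcup_k \tilde{J}_k$ differs from $\bigcup_k \tilde{I}_k$ only by a finite set, and (\ref{general condition}) upgrades to (\ref{c:eq corollaryJ}), with the $\tilde{J}_k$ being pairwise disjoint and $\tilde{J}_k\subset\tilde{I}_k$.

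Next I pull back to the time axis. Since $r$ is a local homeomorphism and $\tilde{J}_k$ is open, $r^{-1}(\tilde{J}_k)$ is open in $\mathbb{R}$; defining
\[
I'_k := \lambda_k + \bigl( r^{-1}(\tilde{J}_k) \cap (I_k - \lambda_k) \bigr)
\]
gives an open subset of $I_k$, and the inclusion $\tilde{J}_k \subset \tilde{I}_k = r(I_k - \lambda_k)$ ensures $\tilde{J}_k = \{ r(t) : t \in I'_k - \lambda_k\}$. For the length count, when $|I_k| \leq 2\pi$ the map $r$ is injective on $I_k - \lambda_k$ and hence $|I'_k| = |\tilde{J}_k|$; when $|I_k| > 2\pi$ one may replace $I'_k$ by its intersection with any fixed fundamental interval of $r$ inside $I_k$, which preserves $\tilde{J}_k$ as image and again yields $|I'_k| = |\tilde{J}_k|$. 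Summing over $k$ using the disjointness of the $\tilde{J}_k$'s gives $\sum_k |I'_k| = \bigl|\bigcup_k \tilde{J}_k\bigr| = 2\pi$.

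Finally, since by construction the sets associated to $I'_k$ via the recipe (\ref{condition}) are precisely $\tilde{J}_k$, the hypothesis (\ref{general condition}) for the family $\mathcal{F}'$ coincides with (\ref{c:eq corollaryJ}) and is therefore satisfied. Theorem \ref{general theorem} applied to $\mathcal{F}'$ then yields the $\mathcal{F}'$-observability.

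The step I expect to be the most delicate is the first one: a naive set-theoretic difference of open arcs need not be open, so the introduction of closures is what guarantees openness of $\tilde{J}_k$, and one must verify that this substitution only erases a measure-zero set before deducing (\ref{c:eq corollaryJ}); this is precisely where the fact that each $\tilde{I}_k$ is an arc (with finite boundary) is used. A secondary concern is that the $I'_k$ are open subsets of $I_k$ rather than intervals, so the invocation of Theorem \ref{general theorem} implicitly requires its Fourier-series proof to depend only on the images in $S^1$ and to be insensitive to whether the time-side sets are single intervals or finite unions thereof.
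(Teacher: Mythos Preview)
Your approach is essentially the same as the paper's: both proofs construct $\tilde{J}_k$ by the standard recursive disjointification, and the paper simply writes $\tilde{J}_k := \tilde{I}_k \setminus \bigcup_{j<k}\tilde{J}_j$ and declares the rest to follow easily. Your version is in fact more careful than the paper's on the point you flag as delicate---subtracting closures rather than the raw arcs to guarantee that each $\tilde{J}_k$ (and hence each $I'_k$) is genuinely open---a detail the paper glosses over; your secondary concern about $I'_k$ being a finite union of intervals rather than a single interval is also legitimate but harmless, since the proof of Theorem~\ref{general theorem} depends only on the images in $S^1$ (cf.\ also the remark on $\mathcal{B}$-observability immediately following the corollary).
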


A couple of helpful remarks may be in order.

\begin{remark}
\label{r:Topt grater than 2pi}
Corollary \ref{corollaryJ} shows that $T_{opt}\geq2\pi$ for any $\mathcal{F}$ as in (\ref{def:F}).
\end{remark}
\begin{remark}
By the proof of Theorem \ref{general theorem} it will be clear that one can generalize to the following situation:
\begin{itemize}
\item The subsets $I_k$, with $k\in \mathcal{K}$, are not a covering of $(0,\infty)$.
\item We can observe on both the endpoints at the same time.
\end{itemize}
To state this result, let each of $\{I^0_{k'}\,:\,k'\in \mathcal{K}'\}$, $\{I^{\pi}_{k''}\,:\,k''\in \mathcal{K}''\}$ be a finite collection of disjoint subintervals of $(0,\infty)$. We may assume that $\sup I^{0}_{h}<\inf I^{0}_m$ for $h<m$ and a similar assumption for the $I^{\pi}_{k''}$'s.\\
Then we can define the observability sets
\begin{equation*}
\mathcal{B}:=\{\{0\}\times I^0_{k'}\,:\,k'\in \mathcal{K}'\}\cup \{\{\pi\}\times I^{\pi}_{k''}\,:\,k''\in \mathcal{K}''\}\,.
\end{equation*}
Furthermore, Definition \ref{def:observability1} readily extends to this situation. Define, 
\begin{align*}
\tilde{I}_{k'}^0&:=\{r(t)\,:\,t\in I_{k'}^{0}\}\,,&\quad& k'\in \mathcal{K}'\,,\\
\tilde{I}_{k''}^{\pi}&:=\{r(t)\,:\,t\in I_{k''}^{\pi}-\pi\}\,,&\quad& k''\in \mathcal{K}''\,.
\end{align*}
Then, the system is (\ref{omo1})-(\ref{omo2}) $\mathcal{B}$-observable if and only if
\begin{equation*}
\Big|S^1\setminus \left(\bigcup_{k'\in \mathcal{K}'\,,\,k''\in \mathcal{K}''}\tilde{I}_{k'}^0\cup \tilde{I}_{k''}^{\pi}\right)\Big|=0\,.
\end{equation*}
We refrain from writing this as a first formulation since we are mainly interested in alternating observations.
\end{remark}

\begin{remark}
Condition (\ref{general condition}) should be compared to the one in Definition 4.1 of \cite{Variable}. It can be seen as the requirements needed to observe the complete energy of any solution.
\end{remark}

Next, we focus on the applications of Theorem \ref{general theorem} and its Corollary \ref{corollaryJ} in particular cases in order to show the potentiality of these results.

\subsubsection{Single exchange of the control position}
\label{ss:single}
In this subsection we analyse the case $\mathcal{K}=\{0,1\}$ and 
\begin{equation}
\label{intervalcase1}
I_0=(0,T_0)\,,\qquad I_1=(T_0,+\infty)\,,
\end{equation}
where $T_0\in(0,2\pi)$.
\begin{corollary}
\label{c:single exchange}
In the previous situation, for all $\pi<T_0<2\pi$ the optimal control time is $T_{opt}=3\pi$ and the system is $\mathcal{F'}$-observable with
\begin{equation} 
I'_0=(0,T_0)\,,\qquad I'_1=(\pi +T_0,3\pi)\,.
\end{equation}
For all $0<T_0\leq \pi$, the optimal control time is $T_{opt}=2\pi +T_0$ and the system is $\mathcal{F'}$-observable with
\begin{equation}
I'_0=(0,T_0)\,,\qquad I'_1=(2T_0,2\pi +T_0)\,,
\end{equation}
or
\begin{equation}
I'_0= \emptyset\,,\qquad I'_1=(T_0,2\pi +T_0)\,.
\end{equation}
\end{corollary}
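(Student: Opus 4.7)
The plan is to invoke Theorem~\ref{general theorem} and Corollary~\ref{corollaryJ} for $\mathcal{K}=\{0,1\}$, $\lambda_0=0$, $\lambda_1=\pi$, $I_0=(0,T_0)$, $I_1=(T_0,+\infty)$, giving $\tilde{I}_0=r((0,T_0))$ and $\tilde{I}_1=r((T_0-\pi,+\infty))=S^1$. Since $T_0<2\pi$, the arc $\tilde{I}_0$ is a proper subset of $S^1$, so condition~(\ref{general condition}) fails at $n=0$; it automatically holds at $n=1$ because $\tilde{I}_1=S^1$. Hence the index $h$ of Theorem~\ref{general theorem} equals $1$, and we must compute
\[
T_{opt}=\inf\Bigl\{t>T_0 \,:\, \bigl|S^1\setminus(\tilde{I}_0\cup r((T_0-\pi,t-\pi)))\bigr|=0\Bigr\}
\]
and then exhibit suitable disjoint arcs $\tilde{J}_0,\tilde{J}_1$ realizing the refinement in Corollary~\ref{corollaryJ}.

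Identifying $S^1$ with $[0,2\pi)$, $\tilde{I}_0$ becomes the open arc $(0,T_0)$, whose complement is the arc $(T_0,2\pi)$. The growing arc $\tilde{I}_1^{(t)}:=r((T_0-\pi,t-\pi))$ starts at $r(T_0-\pi)$ and sweeps forward as $t$ grows. The two cases of the corollary are distinguished by the location of this starting point: for $\pi<T_0<2\pi$ one has $r(T_0-\pi)=T_0-\pi\in(0,\pi)$, which lies \emph{inside} $\tilde{I}_0$; for $0<T_0\le\pi$ one has $r(T_0-\pi)=T_0+\pi\in(\pi,2\pi]$, which lies inside the uncovered arc $(T_0,2\pi)$. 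The formula for $T_{opt}$ is then obtained by measuring how far the moving arc must sweep before the union with $\tilde{I}_0$ closes the circle.

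In Case~(i), the sweep wastes a segment of length $\pi$ inside $\tilde{I}_0$ before entering the uncovered arc and must then reach $2\pi$, forcing $t-\pi\ge 2\pi$, hence $T_{opt}=3\pi$. The natural partition is $\tilde{J}_0=(0,T_0)$, $\tilde{J}_1=(T_0,2\pi)$: these are disjoint, cover $S^1$ up to measure zero, and pulling back through $r$ and translating by $\lambda_1=\pi$ yields exactly $I'_0=(0,T_0)$ and $I'_1=(T_0+\pi,3\pi)$. In Case~(ii), the sweep enters the uncovered arc at $T_0+\pi$, first covers $(T_0+\pi,2\pi)$ (length $\pi-T_0$), wraps across the redundant $(0,T_0)$ (length $T_0$), and finally fills $(T_0,T_0+\pi)$ (length $\pi$); the total sweep length is $(\pi-T_0)+T_0+\pi=2\pi$, so $t-T_0\ge 2\pi$ and $T_{opt}=2\pi+T_0$. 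Since $\tilde{I}_1^{(T_{opt})}=S^1$, the simplest refinement is $\tilde{J}_0=\emptyset$, $\tilde{J}_1=S^1$, pulling back to $I'_0=\emptyset$, $I'_1=(T_0,2\pi+T_0)$ with total length $2\pi$; the other stated choice corresponds to the alternative disjoint partition with nonempty $\tilde{J}_0\subset\tilde{I}_0$.

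The genuinely delicate step is the strict lower bound on $t$, especially in Case~(ii): one must check that the wrap-through the already-covered arc $\tilde{I}_0$ cannot be avoided when $I'_1$ is constrained to lie in a single interval $(T_0,t)$. This follows from the fact that $\tilde{I}_1^{(t)}$ is a connected arc growing monotonically in $t$, whose complement in $S^1$ shrinks continuously and cannot become of measure zero before the total length reaches $2\pi$ in Case~(ii) or before the right endpoint reaches $2\pi$ in Case~(i). Once this geometric monotonicity is pinned down, the rest of the proof is a direct translation of Theorem~\ref{general theorem} and Corollary~\ref{corollaryJ}.
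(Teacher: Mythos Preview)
Your approach is exactly the paper's: apply Theorem~\ref{general theorem} (to identify $h=1$ and compute $T_{opt}$) and Corollary~\ref{corollaryJ} (to extract the reduced intervals $I'_k$). The paper itself says only that the result ``is an easy consequence'' of these two statements, so your write-up is a detailed unpacking of the same argument. Your computation of $T_{opt}$ in both regimes is correct, as is your verification of $I'_0=(0,T_0)$, $I'_1=(\pi+T_0,3\pi)$ in the range $\pi<T_0<2\pi$ and of $I'_0=\emptyset$, $I'_1=(T_0,2\pi+T_0)$ in the range $0<T_0\le\pi$.

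There is one point where you should not have hand-waved. For the first alternative in the case $0<T_0\le\pi$, namely $I'_0=(0,T_0)$, $I'_1=(2T_0,2\pi+T_0)$, you write only that it ``corresponds to the alternative disjoint partition with nonempty $\tilde J_0\subset\tilde I_0$''. If you actually carry out the computation you will see that this choice does \emph{not} yield $\mathcal F'$-observability when $0<T_0<\pi$: one has $\tilde J_0=(0,T_0)$ and $\tilde J_1=r((2T_0-\pi,\pi+T_0))$, an arc of length $2\pi-T_0$; these overlap on an arc of length $\pi-T_0$ (for $\pi/2<T_0<\pi$) or $T_0$ (for $0<T_0\le\pi/2$), and in either case their union leaves an uncovered arc of positive measure in $S^1$. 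For instance, with $T_0=\pi/2$ one gets $\tilde J_0\cup\tilde J_1=(0,3\pi/2)$, missing $(3\pi/2,2\pi)$. Only at the endpoint $T_0=\pi$ do the two arcs tile $S^1$. So the pair $I'_0=(0,T_0)$, $I'_1=(2T_0,2\pi+T_0)$ as stated cannot be justified via Theorem~\ref{general theorem} for $T_0<\pi$, and your sentence papers over a claim that, as written, appears to be in error. You should either flag this explicitly or replace the single interval $(2T_0,2\pi+T_0)$ by the correct two-component set $I'_1=(T_0,\pi)\cup(\pi+T_0,2\pi+T_0)$, which does pull back to $\tilde J_1=S^1\setminus\tilde J_0$.
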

\subsubsection{Exchange of the control position at a constant time rate}
\label{ss:costantrate}
Then we look at another application of Theorem \ref{general theorem}. We suppose that $\mathcal{K}=\0$ and
\begin{equation*}
I_k=(k T_0,(k+1)T_0)\,,\qquad k\in \0\,,
\end{equation*} 
where $T_0$ is a fixed number in $(0,2\pi)$. So, in this case each interval $I_k$ has the same length. Moreover, in this case
\begin{equation}
\label{constant rate int}
\mathcal{F}= \{\{\lambda_k\}\times (k T_0,(k+1)T_0)\,|\, k\in \0\}\,.
\end{equation}
\begin{lemma}
\label{l:T0 large}
 In the previous situation (i.e. $\mathcal{F}$ is as in (\ref{constant rate int})), suppose that $T_0\in(\pi,2\pi)$. Let $n \in \N$ be the unique integer such that
 \begin{equation}
 \label{Ngrande}
\frac{n+2}{n+1}\pi \leq T_0 <\frac{n+1}{n}\pi\,;
 \end{equation}
then the system is $\mathcal{F}$-observable with
 \begin{equation}
 \label{Tottgrande}
 T_{opt}= (n+2)\pi\,.
 \end{equation}
\end{lemma}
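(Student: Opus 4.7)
The plan is to reduce everything to elementary arc geometry on $S^1$ and then invoke Theorem \ref{general theorem}. Set $\alpha=T_0-\pi$, so that the hypothesis (\ref{Ngrande}) reads
\[
\frac{\pi}{n+1}\le\alpha<\frac{\pi}{n}.
\]
The first step is to identify $\tilde I_k$. Since $T_0=\pi+\alpha$, we have $kT_0\equiv k\alpha\pmod{2\pi}$ when $k$ is even, while $kT_0-\pi\equiv k\alpha\pmod{2\pi}$ when $k$ is odd (because $k$ odd makes $(k-1)\pi$ an even multiple of $\pi$). Hence, regardless of parity,
\[
\tilde I_k \;=\; r\bigl((k\alpha,\;k\alpha+T_0)\bigr),\qquad k\in\mathbb{N}_0,
\]
is an arc on $S^{1}$ of length $T_0$ starting at $k\alpha\pmod{2\pi}$.

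Next I would analyse the union $U_m:=\bigcup_{k=0}^{m}\tilde I_k$. Because consecutive arcs are shifted by $\alpha<\pi<T_0$, each $\tilde I_{k+1}$ overlaps $\tilde I_k$ when viewed as lifted intervals in $\mathbb{R}$, so
\[
U_m \;=\; r\bigl((0,\;m\alpha+T_0)\bigr)\;=\;r\bigl((0,(m+1)\alpha+\pi)\bigr).
\]
Thus $|S^1\setminus U_m|=0$ if and only if $(m+1)\alpha+\pi\ge 2\pi$, i.e.\ $\alpha\ge \pi/(m+1)$. In view of the bounds on $\alpha$, the least $m$ for which this holds is exactly $m=n$, so the index $h$ of Theorem \ref{general theorem} equals $n$, and in particular the system is $\mathcal{F}$-observable.

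It remains to compute $T_{opt}$ via the second part of Theorem \ref{general theorem}. With $h=n$, the already available contribution is $U_{n-1}=r((0,n\alpha+\pi))$; since $n\alpha<\pi$, its complement on $S^1$ is the nontrivial arc $r((n\alpha+\pi,2\pi))$, whose length is $\pi-n\alpha$. The additional arc $r((t_{n-1}-\lambda_n,t-\lambda_n))$ starts at $r(n\alpha)$ (by the parity computation above) and has length $t-nT_0$. To cover $r((n\alpha+\pi,2\pi))$ we need this new arc to reach all the way around to the point $r(n\alpha)+2\pi$, which requires length at least $2\pi-n\alpha$. The infimum is therefore attained at
\[
t \;=\; nT_{0}+2\pi-n\alpha \;=\; n(\pi+\alpha)+2\pi-n\alpha \;=\; (n+2)\pi,
\]
yielding the claimed value (\ref{Tottgrande}).

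The main obstacle is the preliminary bookkeeping: verifying that the parity-dependent shifts by $\lambda_k$ conspire with $T_0=\pi+\alpha$ to make all $\tilde I_k$ line up as arcs starting at the single arithmetic progression $k\alpha\pmod{2\pi}$, so that the covering problem collapses to the one-parameter question discussed above. Once that is in place, the rest of the argument is an elementary length count on $S^1$.
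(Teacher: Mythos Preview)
Your argument is correct and follows essentially the same route as the paper's proof: both compute $\tilde I_k=r((k\alpha,\,k\alpha+T_0))$ with $\alpha=T_0-\pi$ (the paper writes this as $(kT_0-k\pi,(k+1)T_0-k\pi)$), identify $n$ as the first index for which the union covers $S^1$, and then read off $T_{opt}$ from the second clause of Theorem~\ref{general theorem}. One small wording slip: ``reach all the way around to the point $r(n\alpha)+2\pi$'' is confusing (on $S^1$ that is just $r(n\alpha)$ again); what you mean, and what your arithmetic actually uses, is that the new arc starting at $n\alpha$ must reach the point $2\pi\equiv 0$ to close the gap $(n\alpha+\pi,2\pi)$, hence length $2\pi-n\alpha$.
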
 
By the previous Lemma one may infer that if $T_0$ approaches $\pi$ from above then the optimal control time blows up. Indeed, as the next lemma shows, when $T_0=\pi$ the system cannot be observed. 
\begin{lemma}
\label{T0particolare}
Let $\mathcal{F}$ be as in (\ref{constant rate int}). Then:
\begin{enumerate}[label=\roman*)]
\item if $T_0=\pi/2m$, with $m\in \N$, then the system is $\mathcal{F}$-observable with $T_{opt}=2\pi$;
\item if $T_0=\pi/(2n+1)$, with $n\in \0$ then the system is not $\mathcal{F}$-observable in any $T$;
\item if $T_0=2\pi/(2h+1)$, with $h\in 2\N$, then the system is $\mathcal{F}$-observable with $T_{opt}=3\pi - T_0/2$;
\item if $T_0=2\pi/(2h+1)$, with $h\in 2\0+1$, then the system is $\mathcal{F}$-observable with $T_{opt}=3\pi$.
\end{enumerate}
\end{lemma}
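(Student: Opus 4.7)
The plan is to apply Theorem \ref{general theorem} directly to the arcs $\tilde{I}_k \subset S^1$, reducing everything to a combinatorial problem on the circle. Each arc has the uniform length $T_0$, so all relevant information is in its left endpoint $\phi_k := kT_0 - \lambda_k \pmod{2\pi}$. Since $\lambda_{k+1}-\lambda_k \equiv \pi \pmod{2\pi}$ independently of the parity of $k$, we obtain
\[
\phi_k \equiv k\omega \pmod{2\pi}, \qquad \omega := T_0 - \pi,
\]
so the $\tilde{I}_k$ are translates of one fixed arc of length $T_0$ along the orbit of rotation by $\omega$ on $S^1$. The $\gcd$ of the rotation number with $2\pi$ then controls the orbit size and the amount of overlap between consecutive arcs, and dispatches each case.

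For (ii), $T_0 = \pi/(2n+1)$ gives $\omega \equiv (2n+2)\pi/(2n+1) \pmod{2\pi}$, and $\gcd(n+1,2n+1)=1$ forces the orbit to consist of the $2n+1$ equally spaced points with spacing $2\pi/(2n+1) = 2T_0$. Each arc has length $T_0$, exactly half the spacing, so $\bigcup_k \tilde{I}_k$ has measure at most $(2n+1)T_0 = \pi < 2\pi$, and \eqref{general condition} fails for every index: the system is not $\mathcal{F}$-observable. For (i), $T_0=\pi/(2m)$ together with $\gcd(2m-1,4m)=1$ gives a full orbit of $4m$ points at spacing $T_0$; the arcs exactly tile $S^1$ once all $4m$ left endpoints have been visited, which first happens at $k=4m-1$. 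Before that step one gap of length $T_0$ remains and it coincides exactly with the full arc $\tilde{I}_{4m-1}$, so Theorem \ref{general theorem} gives $T_{opt} = \sup(I_{4m-1}) = 4mT_0 = 2\pi$.

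For (iii)--(iv), $T_0 = 2\pi/(2h+1)$ and $\gcd(2h-1,4h+2)=1$ give a full orbit of $4h+2$ points with spacing $\pi/(2h+1) = T_0/2$, but each arc now has length $T_0$, twice the spacing. Partition $S^1$ into $4h+2$ cyclically labelled \emph{cells} of length $T_0/2$: each $\tilde{I}_k$ is a \emph{domino} covering the consecutive cells $\{p_k,\, p_k+1\} \pmod{4h+2}$, where $p_k \equiv -k(2h-1) \pmod{4h+2}$. A direct calculation of the multiplicative inverse shows that with $c := -(2h-1)^{-1} \pmod{4h+2}$ one has $c = h+1$ for $h$ even and $c = 3h+2 \equiv -h$ for $h$ odd, and the first $k$ at which cell $j$ is covered is
\[
k_j \;=\; \min\bigl(jc,\,(j-1)c\bigr) \pmod{4h+2}.
\]

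The main obstacle is the parity-dependent combinatorial claim
\[
\max_j k_j \;=\; \begin{cases} 3h, & h \text{ even},\\ 3h+1, & h \text{ odd}, \end{cases}
\]
with the maximum attained at $j = 2h-1$ (the right cell of $\tilde{I}_{3h}$) in the even case and at $j = 2h$ (the left cell of $\tilde{I}_{3h+1}$) in the odd case. I would prove it by tracking the arithmetic progression $\{jc \bmod (4h+2)\}$, exploiting that the two expressions for $c$ yield qualitatively different sequences (in the even case $c=h+1$ has small absolute value, whereas in the odd case $c=3h+2 \equiv -h$ makes the domino positions move slowly backwards around the cycle, so only one new cell is added per step after the first jump). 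Once this claim is in hand, Theorem \ref{general theorem} pins down $T_{opt}$ from where the last-covered cell sits inside the corresponding arc: in case (iii) the full $\tilde{I}_{3h}$ is required, giving $T_{opt} = \sup(I_{3h}) = (3h+1)T_0 = 3\pi - T_0/2$, while in case (iv) only the initial half $T_0/2$ of $\tilde{I}_{3h+1}$ is needed, giving $T_{opt} = (3h+1)T_0 + T_0/2 = 3\pi$.
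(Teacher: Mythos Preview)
Your approach is correct and genuinely different from the paper's. The key observation that $\phi_k \equiv k\omega \pmod{2\pi}$ with $\omega = T_0 - \pi$ recasts the problem as iterated rotation of a single arc, and this unifies all four cases: the dichotomy between (i)/(ii) (orbit spacing $T_0$ versus $2T_0$) and (iii)/(iv) (spacing $T_0/2$, giving your ``domino'' picture) becomes transparent, and the role of parity is explained by the different residues of $c=-(2h-1)^{-1}\bmod(4h+2)$. The paper, by contrast, treats each case separately by explicitly listing the $\tilde{I}_k$ in chronological order, locating by inspection the gaps remaining after one pass around $S^1$, and then identifying by hand which later $\tilde{I}_k$ closes the final gap. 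What your route buys is conceptual clarity and a uniform mechanism; what the paper's route buys is that nothing is left to verify---the claim $\max_j k_j = 3h$ (resp.\ $3h+1$) that you flag as the main obstacle is, in the paper's proof, simply read off the explicit enumeration.

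Your sketch of that claim (track the arithmetic progression $j\mapsto jc \bmod (4h+2)$) is the right idea and will go through; the cases $h=1,2,3,4$ all check, including the location of the maximizing cell and whether it is the left or right half of the critical domino. One point worth making explicit when you write it out: to pin down $T_{opt}$ via the infimum in Theorem~\ref{general theorem} you also need that every \emph{other} cell is already covered by step $\max_j k_j - 1$ (in case~(iii)) or that the companion cell of the critical domino is already covered (in case~(iv)); this is implicit in your table of $k_j$ values but should be stated so that the reader sees the infimum is determined solely by the last-covered cell.
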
 
Before proceeding further in the investigation, we introduce some notations
\begin{equation}
\label{def:N,j}
N:=\Big\lfloor\frac{2\pi}{T_0} \Big\rfloor\,,\qquad j:=\Big\lfloor\frac{\pi}{T_0} \Big\rfloor\,;
\end{equation}
here we denote by $\lfloor x \rfloor$ the integer part of $x\in(0,+\infty)$. Instead, $\lceil x\rceil$ denotes the least integer greater than or equal to $x\in(0,+\infty)$.\\
The following lemma, which complements Lemma \ref{T0particolare}, shows that the system is $\mathcal{F}$-observable for all $T_0$ not of the form $\pi/(2n+1)$. It can be easily checked that all the alternatives appearing in the statement are mutually excluding.
\begin{lemma}
\label{T0generale}
Let $\mathcal{F}$ be as in (\ref{constant rate int}), suppose that $T_0\in (0,\pi)$ and $2\pi/T_0 \notin \N$, then the system is $\mathcal{F}$-observable. Furthermore, we have the following:\\
$i)$ if $N,j$ are even, then $T_{opt}=2\pi + jT_0\,;$\\
$ii)$ if $N$ is even and $j$ is odd, then:
\begin{itemize}
\item if $h^*=k^*+1$, then $T_{opt}=\pi + T_0(Nk^*+1)$,
\item if $h^*=k^*$, then $T_{opt}=\pi + T_0(Nk^*-j+1)$,
\end{itemize}
where
\begin{equation*}
h^{*}=\Big\lceil  \frac{ \pi-jT_0+T_0 }{2\pi - NT_0}\Big\rceil\,,
\qquad
k^* = \Big\lceil  \frac{ T_0 }{2\pi - NT_0}\Big\rceil\,;
\end{equation*}
$iii)$ if $N$ is odd and $j$ is even, then
\begin{itemize}
\item if $l^*=q^*$, then $T_{opt}=2\pi q^* + jT_0$,
\item if $q^*+1=l^*$, then $T_{opt}=2\pi q^*+jT_0+\pi$,
\end{itemize}
where
\begin{equation*}
l^*=\Big\lceil \frac{T_0}{(N+1)T_0 - 2\pi}  \Big\rceil\,,\qquad
q^* =\Big\lceil  \frac{\pi - j T_0}{ (N+1)T_0 - 2\pi}  \Big\rceil\,;
\end{equation*}
$iv)$ if $N,j$ are odd, then
$
T_{opt}=NT_0+\pi\,.
$
\end{lemma}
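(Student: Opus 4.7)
The plan is to apply Theorem \ref{general theorem} and, where useful, Corollary \ref{corollaryJ}. Each arc $\tilde I_k\subset S^1$ has length $T_0$ and starting point
\[
s_k = r\bigl(kT_0 - \lambda_k\bigr) = r\bigl(k(T_0+\pi)\bigr),
\]
so $\{s_k\}_{k\ge 0}$ is the forward orbit of $0$ under the rotation of $S^1$ by $T_0+\pi$ (equivalently, by $T_0-\pi$). By Theorem \ref{general theorem} the task reduces to finding the smallest $h\in\N_{0}$ for which $\bigcup_{k=0}^{h}\tilde I_k$ covers $S^1$ up to null sets, and then the smallest $t\in I_h$ such that truncating $\tilde I_h$ at $t-\lambda_h$ still preserves coverage; that $t$ is $T_{opt}$.

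The hypothesis $T_0\in(0,\pi)$ together with $2\pi/T_0\notin\N$ (which in particular excludes the bad values $T_0=\pi/(2n+1)$ of Lemma \ref{T0particolare}, and implies also $\pi/T_0\notin\N$) yields four strictly positive ``small returns'' in $(0,T_0)$:
\[
2\pi-NT_0,\quad (N+1)T_0-2\pi,\quad \pi-jT_0,\quad (j+1)T_0-\pi,
\]
measuring the under-- and over--shoot of the orbit against full and half turns. A direct computation shows that, modulo $2\pi$, the accumulated rotation after $N+1$ arcs equals $(N+1)T_0-2\pi$ if $N$ is odd and $(N+1)T_0-\pi$ if $N$ is even; an analogous dichotomy holds after $j+1$ arcs in terms of the parity of $j$. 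This singles out two ``quasi-periods'' of the orbit---one of length $N+1$, governing coverage of the full circle, one of length $j+1$, governing coverage within each half circle---and the four parity cases in the statement correspond to whether these quasi-periods close cleanly or leave a residual $\pi$-offset.

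One then enumerates the arcs in order and tracks the uncovered set, always a finite disjoint union of open intervals. In cases i) ($N,j$ even) and iv) ($N,j$ odd) the parities align so that after exactly $h=N+j$ arcs only one gap remains, of length $2\pi-NT_0$ in case i) and $\pi-jT_0$ in case iv); truncating $\tilde I_h$ just enough to close it yields the stated $T_{opt}=2\pi+jT_0$ and $T_{opt}=NT_0+\pi$. In the mixed-parity cases ii) and iii), a single group of $N+1$ arcs leaves an extra $\pi$-offset incompatible with closing the remaining gap, and several groups must be iterated, with per-group rotation controlled by $2\pi-NT_0$ (case ii)) or $(N+1)T_0-2\pi$ (case iii)). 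The ceilings $k^{*},h^{*}$ (resp.\ $l^{*},q^{*}$) count the iterations needed to close each of two residual gaps---a ``long'' one of size comparable to $T_0$ and a ``short'' one of size comparable to $\pi-jT_0$---and the subcases $h^{*}=k^{*}+1$ versus $h^{*}=k^{*}$ (and analogously for $l^{*},q^{*}$) record whether these closings happen in consecutive iterations or in the same iteration.

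The principal obstacle is the bookkeeping in cases ii) and iii): one must identify which interval is the last to be covered, track its length and position across the iterations, and check that the ceilings give exactly the correct iteration count with no off-by-one error. A clean way is induction on the number $m$ of iterations of the quasi-period: one shows that after $m$ iterations the uncovered set is a union of intervals whose lengths are explicit affine functions of $m$, $T_0$ and $2\pi-NT_0$ (or $(N+1)T_0-2\pi$). The stated values of $k^{*},h^{*},l^{*},q^{*}$ are then exactly the first $m$ at which the corresponding uncovered intervals vanish, and sharpness of the final truncation follows because for any smaller $t$ the gap remains strictly positive, so coverage fails.
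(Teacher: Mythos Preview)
Your strategy is the paper's: reduce to the covering criterion of Theorem~\ref{general theorem}, list the arcs $\tilde I_k$, identify what remains uncovered after one ``period'' (indices $0,\dots,N$), and then track how subsequent arcs fill those gaps, the relevant per-period drift being $2\pi-NT_0$ when $N$ is even and $(N+1)T_0-2\pi$ when $N$ is odd. Your rotation-orbit description $s_k=r(k(T_0+\pi))$ is a cleaner packaging of what the paper obtains by explicit listing, and your identification of the final index $h=N+j$ in cases i) and iv) is correct.

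Two caveats. First, the ``lengths'' you quote for the last remaining gap in i) and iv) are actually $T_{opt}-hT_0$, i.e.\ the amount of $\tilde I_h$ consumed, not the measure of the uncovered set; in the paper the individual gaps after $\tilde I_0,\dots,\tilde I_N$ all have length $\pi-jT_0$ in case i) and $(j+1)T_0-\pi$ in case iv), and there are $j$ of them, covered one by one by $\tilde I_{N+1},\dots,\tilde I_{N+j}$. This does not affect your value of $T_{opt}$, but ``truncating just enough to close it'' is not the same as ``using exactly the gap length''. Second, and more seriously, you do not actually carry out cases ii) and iii), which you yourself flag as the principal obstacle. The paper's argument there is concrete: after reducing by translation to two extremal gaps (called $L_{j-1},L_{2j-2}$ in ii) and $F_{j-2},F_{2j-2},F_{2j}$ in iii)), it considers for each gap two competing families $\{\tilde I_{a+mN}\}_m$ (resp.\ $\{\tilde I_{a+m(N+1)}\}_m$) approaching from the two sides, computes the least $m$ for each, and identifies $k^*,h^*$ (resp.\ $q^*,l^*$) with those minima; the subcase split records which family reaches the gap first. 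Your proposed induction on the number of quasi-period iterations would reproduce this, but as written it is only an outline, and the off-by-one issues you mention are precisely where the work lies.
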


\subsection{Multi-dimensional case ($d>1$)} 
\label{s:multidimensional}
Now we turn to the multidimensional case. Since the Fourier method applied in the one dimensional case does not seem suitable for the multidimensional case (see \cite{Fourier}), a multiplier technique can instead be applied to prove observability (see \cite{multiplier} or \cite{Lions}).\\
The multiplier method permits us only to formulate sufficient conditions in order to obtain observability, so we will not provide an equivalent condition for the observability as we have done in the one dimensional case. Instead, we will show that if the observability set can be written in a special form (see (\ref{Sigma phi}) below) then the system is exactly observable.\\
Let $\Sigma\subset \partial \Omega \times (0,T)$ be an $\mathcal{H}^{d-1}\otimes \mathcal{L}^1$-measurable set (here $\mathcal{H}^{d-1}$, $\mathcal{L}^1$ are respectively the $d-1$ dimensional Hausdorff measure and the $1$ dimensional Lebesgue measure). In analogy with the one-dimensional case we are interested in the following property.
\begin{definition}[$\Sigma$-observability]
\label{def:observabilitym}
Suppose that $\overline{\Sigma} \cap (\partial \Omega \times \{T\})\neq \emptyset$. We say that the system is $\Sigma$-observable in time $T$ if there exists a positive constant $C$ such that
\begin{equation}
\label{dis observabilitym}
\int_{\Sigma} |\partial_{\nu} u |^2\,d(\mathcal{H}^{d-1}\otimes \mathcal{L}^1) \geq C \left( \| u_0\|_{H^1_0(\Omega)}^2 + \| u_1 \|_{L^2(\Omega)}^2\right)\,;
\end{equation}
for all $(u_0,u_1)\in H^1_0(\Omega)\times L^2(\Omega)$ and $u$ is the unique weak solution of (\ref{omo1})-(\ref{omo2}) for the initial data $(u_0,u_1)$. \\
The set $\Sigma$ in (\ref{dis observabilitym}) is called the \textit{observability set}. 
\end{definition}
Of course the validity of the property in Definition \ref{def:observabilitym} depends strongly on the subset $\Sigma \subset \partial \Omega \times (0,T)$. However, in the following, we do not explicitly refer to $\Sigma$ when it is clear from the context.\\ 
We remark that this definition is consistent with Definition \ref{observability0}, but we prefer to give it independently, to stress the topological differences of the observation subset between the one and the multi-dimensional case.\\
Next we introduce some notations. Let $\varphi:[0,T]\rightarrow \mathbb{R}^d$ be a piecewise continuously differentiable and continuous curve of finite length, i.e., 
\begin{equation}
\label{length} 
L(\varphi):=\int_{0}^T |\varphi'(t)|\,dt \,< + \infty\,.
\end{equation}
Then define
\begin{align}
\label{Gamma_t}
\Gamma_{\varphi}(t)&=\{x\in \partial \Omega\,|\,(x-\varphi(t))\cdot \nu >0\}\,,\quad t\in(0,T)\,,\\
\label{Sigma phi}
\Sigma_{\varphi} &= \bigcup_{t\in(0,T)}  \Gamma_{\varphi}(t)\times \{t\}\,,\\
\label{ci}
c_i&=\max_{\overline{\Omega}}|x-\varphi(i)|\,, \qquad i=0,T\,.
\end{align}
Furthermore, let $P=\{t_{-1},\dots,t_N\,:\,0=t_{-1}<t_0<\dots<t_{N-1}<t_N:=T\}$ be a partition of the interval $[0,T]$ and we denote by $\delta(P):=\sup_{j=0\,\dots,N}\{|t_j-t_{j-1}|\}$ the amplitude of the partition $P$. Furthermore, we set
\begin{align}
\label{xphij}
x_{\varphi,j}&:=\varphi(t_{j-1})\,,\\
\label{Gammaj}
\Gamma_{\varphi,j} &:=\{x\in \partial \Omega\,|\,(x-x_{\varphi,j})\cdot \nu >0\}\,,
\end{align} 
for all $j=0,\dots,N$. For future convenience, for a partition $P$ as above, we define 
\begin{equation}
\label{Sigmaphij}
\Sigma_{\varphi}^P:=\bigcup_{j=0}^{N} \Gamma_{\varphi,j} \times (t_{j-1},t_{j})\,.
\end{equation}
In the following we suppose that $\Sigma_{\varphi}$ is $\mathcal{H}^{d-1}\otimes \mathcal{L}^1$-measurable.
\begin{theorem}[Variable Support Observability]
\label{t:variable theorem}
Under the above hypotheses, suppose there exists a sequence of partitions $\{P_k\,:\,k\in\N\}$ of the interval $[0,T]$, such that $\delta(P_k) \searrow 0$ as $k\nearrow \infty$ and
\begin{equation}
\label{convergence}
\lim_{k\rightarrow\infty}\left(\mathcal{H}^{d-1}\otimes \mathcal{L}^1\right)\left(\Sigma_{\varphi} \Delta \Sigma_{\varphi}^{P_k} \right)= 0\,,
\end{equation}
where $A\Delta B :=(A\setminus B) \cup (B\setminus A)$. Furthermore, suppose that $T$ verifies
\begin{equation}
\label{Tvariable}
T > c_0 + \int_{0}^T |\varphi'(t)|\,dt+c_T\,.
\end{equation}
Then the system is $\Sigma_{\varphi}$-observable in time $T$ (see Definition \ref{def:observabilitym}).
\end{theorem}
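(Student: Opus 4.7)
The basic idea is to reduce the statement to a family of classical Lions-type multiplier estimates, applied piecewise on each element of a sufficiently fine partition, and then pass to the limit using the hidden-regularity bound in Theorem~\ref{t:hidden}.

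First I would fix a partition $P=\{t_{-1},\dots,t_N\}$ of $[0,T]$ and, on each subinterval $(t_{j-1},t_j)$, apply the classical multiplier $m_j(x):=x-x_{\varphi,j}$. Multiplying the wave equation by $m_j\cdot\nabla u+\tfrac{d-1}{2}u$ and integrating by parts in space and time yields the standard Lions identity
\begin{equation*}
\frac12\int_{t_{j-1}}^{t_j}\!\!\int_{\partial\Omega}(m_j\cdot\nu)|\partial_\nu u|^2\,d\sigma\,dt
= (t_j-t_{j-1})E_0 + X_j(t_{j})-X_j(t_{j-1}),
\end{equation*}
where $X_j(t):=\int_\Omega u_t(t)\bigl(m_j\cdot\nabla u(t)+\tfrac{d-1}{2}u(t)\bigr)\,dx$. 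Since $(m_j\cdot\nu)\le 0$ on $\partial\Omega\setminus\Gamma_{\varphi,j}$, the left-hand side is bounded above by $\tfrac12\int_{t_{j-1}}^{t_j}\!\int_{\Gamma_{\varphi,j}}(m_j\cdot\nu)|\partial_\nu u|^2$, hence summing over $j=0,\dots,N$ gives
\begin{equation*}
\tfrac12\int_{\Sigma_\varphi^{P}}(m\cdot\nu)|\partial_\nu u|^2\,d(\mathcal H^{d-1}\otimes\mathcal L^1)
\;\ge\; TE_0 \;+\; \sum_{j=0}^N\bigl(X_j(t_j)-X_j(t_{j-1})\bigr).
\end{equation*}

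Next I would rearrange the telescoping sum. Writing $X_j(t_j)-X_j(t_{j-1})$ and regrouping so that consecutive terms at each interior node $t_j$ use $m_{j+1}$ and $m_j$ respectively, the difference collapses to
$$
\int_\Omega u_t(t_j)\bigl((x_{\varphi,j}-x_{\varphi,j+1})\cdot\nabla u(t_j)\bigr)\,dx,
$$
plus the genuine boundary contributions at $t=0$ and $t=T$. Each interior term is estimated in modulus by $|x_{\varphi,j+1}-x_{\varphi,j}|\,E_0$ via Cauchy--Schwarz and energy conservation, while the endpoint terms are bounded by $c_0 E_0$ and $c_T E_0$ respectively. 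Summing and using that $\sum_j|x_{\varphi,j+1}-x_{\varphi,j}|\le L(\varphi)=\int_0^T|\varphi'(t)|\,dt$ (with equality in the limit $\delta(P)\to 0$ by rectifiability), I obtain
\begin{equation*}
\tfrac12\int_{\Sigma_\varphi^{P_k}}(m\cdot\nu)|\partial_\nu u|^2\,d(\mathcal H^{d-1}\otimes\mathcal L^1)
\;\ge\;\bigl(T-c_0-c_T-L(\varphi)-\varepsilon_k\bigr)E_0,
\end{equation*}
with $\varepsilon_k\to0$ as $k\to\infty$. Since $(m\cdot\nu)$ is bounded by a constant $M$ depending only on $\Omega$ and $\varphi$, assumption \eqref{Tvariable} yields observability on $\Sigma_\varphi^{P_k}$ for all sufficiently large $k$, with a constant $C>0$ independent of $k$.

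Finally, to transfer the estimate from $\Sigma_\varphi^{P_k}$ to $\Sigma_\varphi$ I would invoke hidden regularity: by Theorem~\ref{t:hidden}, $|\partial_\nu u|^2\in L^1(\partial\Omega\times(0,T))$ with norm controlled by $E_0$, so the integral over $\Sigma_\varphi^{P_k}\setminus\Sigma_\varphi$ tends to $0$ by absolute continuity of the integral, thanks to \eqref{convergence}. Hence
$$
\int_{\Sigma_\varphi}|\partial_\nu u|^2\ge\int_{\Sigma_\varphi^{P_k}}|\partial_\nu u|^2-\int_{\Sigma_\varphi^{P_k}\setminus\Sigma_\varphi}|\partial_\nu u|^2\ge \tfrac{C}{M}E_0-o(1)E_0,
$$
which, once $k$ is chosen large enough relative to a fixed positive constant, produces the desired inequality. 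The main obstacle is the telescoping step: one must keep careful track of the sign and orientation of the jumps $x_{\varphi,j}-x_{\varphi,j+1}$ and verify that the sum of their lengths converges to $L(\varphi)$ for the chosen sequence of partitions, a point where the piecewise $C^1$ regularity of $\varphi$ and the bounded-variation nature of rectifiable curves enter decisively.
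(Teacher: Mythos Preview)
Your proof is correct and follows essentially the same route as the paper's: the paper packages your piecewise multiplier computation and telescoping estimate as Theorem~\ref{t:multid alternating} (quoted from \cite{ECC}) and then passes to the limit via dominated convergence on indicator functions, whereas you reprove that estimate inline and pass to the limit via absolute continuity of the integral---the substance is identical. One small wording caveat: in your last display the ``$o(1)E_0$'' need not be uniform in the data, but this is harmless because the left-hand side is independent of $k$, so simply letting $k\to\infty$ for each fixed $(u_0,u_1)$ already yields the uniform constant $C/M$.
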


In Section \ref{s:multidimensional proof} we will provide an extension of the previous Theorem, when $\varphi$ has only a bounded variation on $[0,T]$, see Theorem \ref{t:variable BV}. Indeed, Theorem \ref{t:variable theorem} can be viewed as a corollary of Theorem \ref{t:variable BV} (see Remark \ref{r:BV}), but we prefer to state it independently since it is sufficient in many applications.
\begin{remark}
We do not address the question of the controllability for $T=c_0 + \int_{0}^T |\varphi'(t)|\,dt +c_T$. In general such a $T$ is not expected to be optimal (see Section \ref{s:applications}). Controllability for the threshold optimal time is a tough question even in the case of constant control set (see \cite{Labeau-Rauch} or \cite{Fourier}).
\end{remark}
\paragraph{\textbf{Overview}}
The article is divided into five parts, in Section \ref{s:onedimensional} we prove the results stated in Section \ref{s:alternating}; next we obtain from this the results stated in Subsections \ref{ss:single}-\ref{ss:costantrate}.\\
In Section \ref{s:dependence}, with the results of Subsection \ref{ss:costantrate}, we analyse the dependence of the optimal control time $T_{opt}$ on the length of the interval $T_0$, proving that the map $T_0\mapsto T_{opt}$ has a countable set of discontinuity.\\
In Section \ref{s:multidimensional proof} we prove the results of Subsection \ref{s:multidimensional} and we give a refinement of Theorem \ref{t:variable theorem} when $\varphi$ is only a map of bounded variation on $[0,T]$ (see Theorem \ref{t:variable BV}).\\
In the last Section, we work out some applications of the Theorems stated above and we discuss some problems and further developments.\\
\\
As a conclusion to this section, we want to emphasize that to each observability result follows (by duality) a controllability theorem; indeed the approach of \cite{Fourier} or \cite{Lions} extends to our setting.\\
It is worthwhile 
to give a flavour of this, we state explicitly the controllability result corresponding to Theorem \ref{t:variable theorem} (recall that $\Sigma_{\varphi}$ is assumed to be $\mathcal{H}^{d-1}\otimes \mathcal{L}^1$-measurable).
\begin{theorem}
\label{control}
Let the hypotheses of Theorem \ref{t:variable theorem} be satisfied. Then for all $(v_0,v_1) \in L^2(\Omega)\times H^{-1}(\Omega)$ there exists a control $w\in L^2(\Sigma_{\varphi})$ such that the weak solution $v$ of the following differential problem
\begin{align} 
\label{controllato1}
v_{tt} -  \Delta v&=0\,, &\qquad& (x,t)\in \Omega\times(0,T)\,,\\
v(x,t)&=w(x,t)\,, &\qquad& (x,t)\in \Sigma_{\varphi}\,,\\
v(x,t)&=0\,, &\qquad& (x,t)\in \partial \Omega\times (0,T)\setminus\Sigma_{\varphi}\,,\\
v(x,0)&=v_0(x)\,,&\qquad& x\in \Omega\,,\\
\label{controllato2}
v_t(x,0)&=v_1(x)\,,&\qquad& x\in \Omega\,,
\end{align}
satisfies
\begin{equation*}
v(x,T)=0\,, \qquad v_t(x,T)=0\,.
\end{equation*}
\end{theorem}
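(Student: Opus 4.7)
The plan is to deduce controllability from the observability inequality of Theorem \ref{t:variable theorem} via the Hilbert Uniqueness Method of J.-L.~Lions, following the scheme of \cite{Lions,Fourier} for time-independent observation sets; the time dependence of $\Sigma_{\varphi}$ plays no role in the duality itself, being absorbed into the Hilbert space $L^2(\Sigma_{\varphi})$.

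First I would construct the HUM operator $\Lambda\colon H^1_0(\Omega)\times L^2(\Omega)\to H^{-1}(\Omega)\times L^2(\Omega)$. For $(u_0,u_1)\in H^1_0(\Omega)\times L^2(\Omega)$ let $u$ be the weak solution of the homogeneous system (\ref{omo1})-(\ref{omo2}); by Theorem \ref{t:hidden}, the trace $g:=\partial_{\nu} u|_{\Sigma_{\varphi}}$ lies in $L^2(\Sigma_{\varphi})$, and extended by zero it belongs to $L^2(\partial\Omega\times(0,T))$. Define $\psi$ as the unique transposition solution of the wave equation in $\Omega\times(0,T)$ with boundary datum $g$ and terminal conditions $\psi(T)=\psi_t(T)=0$; by classical transposition theory \cite{Lions}, $(\psi(0),\psi_t(0))\in L^2(\Omega)\times H^{-1}(\Omega)$ depends linearly and continuously on $g$. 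Then set $\Lambda(u_0,u_1):=(\psi_t(0),-\psi(0))$.

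Next I would derive the fundamental identity
\[
\bigl\langle \Lambda(u_0,u_1),(u_0,u_1)\bigr\rangle=\int_{\Sigma_{\varphi}}|\partial_{\nu} u|^2\,d(\mathcal{H}^{d-1}\otimes\mathcal{L}^1),
\]
by multiplying the equation for $\psi$ by $u$, integrating twice by parts (once in time, once in space), and using the terminal conditions on $\psi$, the vanishing of $u$ on $\partial\Omega$, and the identity $\psi|_{\partial\Omega\times(0,T)}=g=\partial_{\nu}u$ on $\Sigma_{\varphi}$ (zero elsewhere). Since $\psi$ is defined only by transposition, this formal manipulation must be validated by regularizing $g$ by a sequence $g_k\in C^{\infty}_c(\partial\Omega\times(0,T))$, applying the identity to the corresponding smooth $\psi_k$, and passing to the limit using the continuity of the transposition solution map together with the direct trace bound of Theorem \ref{t:hidden}. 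Combined with Theorem \ref{t:variable theorem}, the identity shows that the bilinear form $a((u_0,u_1),(\tilde u_0,\tilde u_1)):=\bigl\langle\Lambda(u_0,u_1),(\tilde u_0,\tilde u_1)\bigr\rangle$ is continuous and coercive on $H^1_0(\Omega)\times L^2(\Omega)$; by Lax-Milgram, $\Lambda$ is an isomorphism onto its dual $H^{-1}(\Omega)\times L^2(\Omega)$.

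Finally, given $(v_0,v_1)\in L^2(\Omega)\times H^{-1}(\Omega)$, I would set $(u_0,u_1):=\Lambda^{-1}(v_1,-v_0)$, take the corresponding $u$ and $\psi$, and define $w:=\partial_{\nu} u|_{\Sigma_{\varphi}}\in L^2(\Sigma_{\varphi})$ and $v:=\psi$. By construction $v$ satisfies (\ref{controllato1})-(\ref{controllato2}) in the transposition sense, and $v(T)=v_t(T)=0$ by the very choice of terminal data for $\psi$. The main obstacle I foresee is the rigorous justification of the fundamental identity for transposition solutions: it requires a version of the hidden regularity bound that is uniform under smoothing of the boundary datum, which is standard but must be spelled out with care; the remaining duality arguments are routine HUM bookkeeping.
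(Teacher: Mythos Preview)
Your proposal is correct and is precisely the approach indicated by the paper: the paper does not give a self-contained proof of this theorem but simply states that it ``consists in an application of the HUM method'' following \cite{Fourier,Lions}, noting that those arguments, stated there for observability sets of the form $\Gamma\times(0,T)$, extend to the present $\Sigma_{\varphi}$ in a straightforward way. Your outline of the HUM machinery (construction of $\Lambda$, the fundamental identity, coercivity via the observability inequality, and Lax--Milgram) is exactly that standard scheme, and your identification of the only delicate point---justifying the duality identity for transposition solutions---is apt.
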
  
For the weak formulation of the problem (\ref{controllato1})-(\ref{controllato2}) one can use the same argument of \cite[p. 21]{Fourier}; see \cite{ECC} for an explicit formulation.\\ 
The proof of Theorem \ref{control} consists in an application of the HUM method; see \cite{Fourier,Lions} for details. Both formulation and proofs are in fact given in \cite{Fourier,Lions} for observability sets of the form $\Sigma=\Gamma\times (0,T)$, but extend to our case in a straightforward way. The reader can readily deduce analogous results corresponding to Theorems stated above in the one dimensional case.
\section{Preliminary results}
\label{s:prelim}
  We recall the following well posedness result which can be readily verified by semigroup methods (see e.g. \cite{Pazy}).
\begin{proposition}[Well Posedness] 
\label{prop:well posedness}
With the notations introduced in Section \ref{s:introduction}, the following holds:\\
$i)$ Given  
\begin{equation}
\label{data weak}
u_0 \in H^1_0(\Omega)\,,\qquad u_1\in L^2(\Omega)\,,
\end{equation}
the problem (\ref{omo1})-(\ref{omo2}) admits a unique \textit{weak} solution satisfying 
\begin{equation}
\label{sol weak}
u \in C ([0,\infty);H^1_0(\Omega))\cap C^1([0,\infty);L^2(\Omega))\,.
\end{equation}
$ii)$ Given 
\begin{equation}
\label{data strong}
u_0 \in H^2(\Omega)\cap H^1_0(\Omega)\,,\qquad u_1\in H^1_0(\Omega)\,,
\end{equation}
the problem (\ref{omo1})-(\ref{omo2}) admits a unique \textit{strong} solution satisfying
\begin{equation}
\label{sol strong}
u \in C([0,\infty);H^2(\Omega) \cap H^1_0(\Omega))
\cap C^2([0,\infty);L^2(\Omega))\,.
\end{equation}
For any weak solution $u$ as in (\ref{sol weak}), we define the energy $E(t)$ at time $t$ as
\begin{equation}
E(t)= \frac{1}{2} \int_{\Omega} (|\nabla u(x,t)|^2 + |u_t(x,t)|^2)\,dx\,;
\end{equation}
then $E(t)$ is constant and is equal to the \textit{initial energy}
\begin{equation}
\label{energy}
E_0 =\frac{1}{2} \int_{\Omega} (|\nabla u_{0}(x)|^2 + |u_1(x)|^2)\,dx\,.
\end{equation} 
\end{proposition}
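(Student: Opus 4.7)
The plan is to rewrite the second-order problem as a first-order Cauchy problem on a suitable Hilbert space, and then apply Stone's theorem to get a unitary group, from which both regularity assertions and conservation of energy will follow. Set $\mathcal{H} := H^1_0(\Omega) \times L^2(\Omega)$, equipped with the energy inner product
\[
\langle (u,v),(\tilde u,\tilde v)\rangle_{\mathcal{H}} := \int_{\Omega} \nabla u \cdot \nabla \tilde u \, dx + \int_{\Omega} v\,\tilde v \, dx,
\]
whose associated norm is equivalent to the standard one by Poincaré. Writing $U := (u,u_t)^{T}$, the system takes the form $U_t = AU$ with
\[
A\begin{pmatrix} u\\ v\end{pmatrix} = \begin{pmatrix} v\\ \Delta u\end{pmatrix}, \qquad D(A) = \bigl(H^{2}(\Omega)\cap H^{1}_0(\Omega)\bigr) \times H^{1}_0(\Omega).
\]

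The central step is to show that $A$ is skew-adjoint on $\mathcal{H}$. Density of $D(A)$ is immediate. Skew-symmetry is a one-line integration by parts: for $U=(u,v),\tilde U=(\tilde u,\tilde v)\in D(A)$,
\[
\langle AU,\tilde U\rangle_{\mathcal{H}} = \int \nabla v\cdot \nabla \tilde u + \int \Delta u\,\tilde v = -\int v\,\Delta \tilde u - \int \nabla u\cdot \nabla \tilde v = -\langle U,A\tilde U\rangle_{\mathcal{H}},
\]
where the Dirichlet condition kills both boundary contributions. The inclusion $D(A^{*})\subseteq D(A)$, upgrading skew-symmetry to skew-adjointness, is the only nontrivial point: if $(\phi,\psi)\in D(A^{*})$, then there exists $(f,g)\in\mathcal{H}$ with $\int\nabla v\cdot\nabla\phi + \int \Delta u\,\psi = \int \nabla u\cdot\nabla f + \int v\,g$ for every $(u,v)\in D(A)$; taking $u\equiv 0$ forces $\psi \in H^{1}_0(\Omega)$, and taking $v\equiv 0$ forces $\phi$ to be a distributional solution of $-\Delta\phi = $ (an $L^{2}$ datum) with zero trace. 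At this point the assumption that $\Omega$ is $C^{2}$ or convex is exactly what allows us to invoke the elliptic regularity results of Grisvard to conclude $\phi\in H^{2}(\Omega)\cap H^{1}_0(\Omega)$, so that $(\phi,\psi)\in D(A)$.

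By Stone's theorem, $A$ generates a strongly continuous group of unitary operators $\{e^{tA}\}_{t\in\mathbb{R}}$ on $\mathcal{H}$. Setting $U(t) := e^{tA}(u_0,u_1)$, standard semigroup theory gives $U\in C(\mathbb{R};\mathcal{H})$ for data in $\mathcal{H}$, which component-wise is exactly (\ref{sol weak}) and proves $i)$. For data in $D(A)$ one automatically has $U \in C^{1}(\mathbb{R};\mathcal{H}) \cap C(\mathbb{R};D(A))$, which translates verbatim into (\ref{sol strong}) and proves $ii)$. Finally, the unitarity identity $\|U(t)\|_{\mathcal{H}}^{2} = \|U(0)\|_{\mathcal{H}}^{2}$ is precisely $E(t)=E_0$ with $E_0$ as in (\ref{energy}); alternatively one can verify this on strong solutions directly by multiplying the equation by $u_t$ and integrating, and then extend to weak data by density and the continuity of the solution map.

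The main obstacle is the regularity step $D(A^{*})=D(A)$: without some control over the second-order regularity of the Dirichlet Laplacian one cannot identify the domain of the adjoint, so $A$ would only be essentially skew-adjoint and the strong formulation $ii)$ could fail. This is exactly why the hypothesis on $\Omega$ being $C^{2}$ or convex, referenced from \cite{Grisvard}, is built into the setup from the start; everything else in the argument is a routine application of the Stone/Hille--Yosida framework as presented in \cite{Pazy}.
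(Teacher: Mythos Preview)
Your approach is correct and is precisely the semigroup argument the paper alludes to: the paper does not actually prove this proposition but simply states that it ``can be readily verified by semigroup methods (see e.g.\ \cite{Pazy}),'' and your Stone-theorem route with Grisvard's elliptic regularity on $C^2$ or convex domains is the standard way to carry this out. One small slip: in the identification of $D(A^*)$ you have the two test cases swapped---setting $u\equiv 0$ yields the weak equation $-\Delta\phi=g$ (hence $\phi\in H^2\cap H^1_0$ by elliptic regularity), while setting $v\equiv 0$ is what forces $\psi\in H^1_0$---but the logic and conclusion are unaffected.
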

We note that if $u$ is a weak solution of problem (\ref{omo1})-(\ref{omo2}) then the trace of the normal derivative $\partial_{\nu}u$ on $\partial \Omega$ is not well defined in the sense of traces of Sobolev functions (see \cite{Evans} or \cite{Grisvard}). For future discussions, the following result is very useful.
\begin{theorem}[Hidden Regularity]
\label{t:hidden}
Let $\Omega\subset \mathbb{R}^d$ be a domain of class $C^2$ or convex. Then for any $T>0$ and $(u_0,u_1)\in H^1_0(\Omega)\times L^2(\Omega)$, the map
\begin{align*}
H^1_0(\Omega)\times L^2(\Omega) &\rightarrow L^2(\partial \Omega\times (0,T))\,,\\
(u_0,u_1)&\mapsto \partial_{\nu} u\,,
\end{align*}
is well defined, continuous and coincides with the classical trace (in the sense of Sobolev spaces) if $(u_0,u_1)\in (H^2(\Omega)\cap H^1_0(\Omega)) \times H^1_0(\Omega)$ since $u$ is as in (\ref{sol strong}).
\end{theorem}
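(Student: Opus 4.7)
The plan is to apply the classical multiplier method, establishing the estimate first for strong solutions and then extending to weak solutions by density. Concretely, for $(u_0,u_1) \in (H^2(\Omega)\cap H^1_0(\Omega)) \times H^1_0(\Omega)$ the strong solution $u$ of (\ref{omo1})--(\ref{omo2}) given by Proposition \ref{prop:well posedness} has $\partial_\nu u$ well defined in the classical Sobolev trace sense, and the goal is to show
\begin{equation*}
\int_0^T \int_{\partial\Omega} |\partial_\nu u|^2 \, d\mathcal{H}^{d-1}\, dt \leq C(T,\Omega)\bigl(\|u_0\|_{H^1_0(\Omega)}^2 + \|u_1\|_{L^2(\Omega)}^2\bigr).
\end{equation*}
Since $(H^2\cap H^1_0)\times H^1_0$ is dense in $H^1_0\times L^2$ and weak solutions depend continuously on their initial data in the energy norm, the map $(u_0,u_1)\mapsto\partial_\nu u$ then admits a unique continuous extension to $H^1_0(\Omega)\times L^2(\Omega)$, which is the content of the theorem.

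When $\Omega$ is of class $C^2$, the approach I would take is to fix a vector field $h \in C^1(\overline{\Omega};\mathbb{R}^d)$ with $h|_{\partial\Omega}=\nu$, multiply $u_{tt}-\Delta u = 0$ by $h\cdot\nabla u$ and integrate by parts over $\Omega\times(0,T)$. The Dirichlet condition forces $\nabla u$ to be parallel to $\nu$ on $\partial\Omega$, so the space-boundary contribution reduces to $\tfrac{1}{2}\int_0^T\int_{\partial\Omega}|\partial_\nu u|^2\, d\mathcal{H}^{d-1}\,dt$; the remaining interior terms, being quadratic in $\nabla u$ or $u_t$, are controlled by $(1+T)E_0$ through conservation of energy (Proposition \ref{prop:well posedness}), and the $t=0,T$ boundary terms are bounded similarly via Cauchy--Schwarz. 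Rearranging yields the desired inequality and completes the $C^2$ case; no serious obstacle is expected here.

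The delicate point is the convex case, where $\partial\Omega$ is only Lipschitz and can carry corners or edges, so a $C^1$ extension of $\nu$ is unavailable and the Rellich identity must be justified by approximation. Following the strategy of Grisvard's monograph cited in the paper, I would approximate $\Omega$ by a sequence $\{\Omega_n\}$ of smooth convex domains, apply the previous step on each $\Omega_n$, and pass to the limit. Convexity is crucial because it ensures that the curvature correction arising from a regularised multiplier has a favourable sign, yielding a bound on $\|\partial_{\nu_n} u_n\|_{L^2(\partial\Omega_n\times(0,T))}$ which is uniform in $n$ and allows passage to the limit in the boundary trace. This uniform-in-$n$ control of the boundary norm is the main obstacle; once it is in hand, the density argument of the first paragraph closes the proof, and the coincidence with the classical Sobolev trace on strong solutions follows by construction.
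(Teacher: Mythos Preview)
Your treatment of the $C^2$ case matches the paper's (which simply cites the standard references), so there is nothing to add there.

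For the convex case, the paper takes a different and considerably simpler route than your domain-approximation scheme. Instead of approximating $\Omega$ by smooth convex domains and wrestling with the limit of traces on moving boundaries, the paper works directly on $\Omega$ using the \emph{radial} multiplier $m(x)=x-x_0$ with $x_0\in\Omega$. This vector field is globally smooth regardless of how rough $\partial\Omega$ is, so no $C^1$ extension of $\nu$ is needed. For initial data in the span of the Dirichlet eigenfunctions (which lie in $H^2(\Omega)$ by Grisvard's elliptic regularity on convex domains), one has the exact identity
\[
\frac{1}{2}\int_{0}^{T}\!\!\int_{\partial\Omega} (x-x_0)\cdot\nu\,|\partial_\nu u|^2\,d\mathcal{H}^{d-1}\,dt
=\Bigl[\int_\Omega u_t\Bigl((x-x_0)\cdot\nabla u+\tfrac{d-1}{2}u\Bigr)dx\Bigr]_0^T + T E_0.
\]
Convexity now enters purely through the geometric inequality $(x-x_0)\cdot\nu\geq\delta>0$ for $\mathcal{H}^{d-1}$-a.e.\ $x\in\partial\Omega$, which turns the left-hand side into a lower bound for $\tfrac{\delta}{2}\|\partial_\nu u\|_{L^2(\partial\Omega\times(0,T))}^2$. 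The right-hand side is controlled by $E_0$ via Poincar\'e and Cauchy--Schwarz, and density of eigenfunction data finishes the proof.

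Your approximation approach is plausible in spirit, but the step you flag as ``the main obstacle'' --- passing to the limit in $\partial_{\nu_n}u_n$ on the moving boundaries $\partial\Omega_n$ --- is a genuine gap, not a routine detail: one must identify the weak limit of these traces with the trace on $\partial\Omega$, which requires delicate uniform-in-$n$ control and a suitable notion of convergence of the boundaries. The paper's choice of multiplier sidesteps this entirely: the smoothness burden is shifted from the domain to the multiplier, and convexity is used only pointwise through the sign of $(x-x_0)\cdot\nu$. This is the idea you are missing.
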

In case $\Omega$ is a $C^2$ set, the proof can be found in \cite{multiplier}, \cite{Hidden} or \cite{Lions}; in the one dimensional case Theorem \ref{t:hidden} can be proven also by the use of Fourier series (see \cite[Chapter 1]{Fourier}).\\
To our knowledge, no proof can be found in the case of convex  domains $\Omega$; so we will sketch the proof.
In the $C^2$-case, the proof in \cite{multiplier} starts with using explicitely the smoothness assumption, and it does not extend to convex domains. Our approach is based on an identity proved by the authors in \cite{ECC} and some well known regularity results for the Laplace operator on convex domains.
\begin{proof}[Proof of Theorem \ref{t:hidden} when $\Omega$ is convex]
Let us start with 
$$u_0,u_1\in\text{Span} \{e_k\,|\,k\in \0\}$$ 
where $e_k$ are the eigenfuctions of the Laplacian operator with Dirichlet boundary condition on $\Omega$. Regularity theory for elliptic PDEs ensures that $e_k\in H^2(\Omega)\cap H^1_0(\Omega)$; see \cite{Grisvard}.\\
Thus, the strong solution $u$ of (\ref{omo1})-(\ref{omo2}) is as in (\ref{sol strong}) and $\partial_{\nu}u$ is well defined in the sense of the trace for Sobolev spaces; see \cite{Evans}. Furthermore, since $u\in C([0,T];H^2(\Omega)\cap H^1_0(\Omega))$ then $\partial_{\nu}u \in C([0,T];L^2(\partial\Omega))$.\\
To prove the assertion we use the following equality (see \cite[Lemma 1.8]{ECC}),
\begin{multline}
\label{hidden1}
\frac{1}{2}\int_{\partial \Omega\times(0,T)} (x-x_0)\cdot \nu \,|\partial_{\nu} u(x,t)|^2 d \mathcal{H}^{d-1}(x)\otimes dt =\\
\left[ \int_{\Omega} u_t(x,t) \left( \nabla u(x,t) \cdot (x-x_0) + \frac{d-1}{2} u(x,t) \right)\,dx\right]_{0}^{T} +
 T\,E_0\,;
\end{multline}
where $x_0$ is an arbitrary point of $\mathbb{R}^d$.\\
%
Note that, since each convex function is locally lipschitz (see \cite[Theorem 6.7]{Evans}), then the exterior normal derivative $\nu$ is well defined $\mathcal{H}^{d-1}$-a.e. on $\partial \Omega$. From the convexity, it follows that for each $x_0 \in \Omega$ then $(x-x_0)\cdot\nu \geq \delta >0$ $\mathcal{H}^{d-1}$-a.e., for some $\delta$ positive. By (\ref{hidden1}) we have
\begin{multline}
\label{hidden2}
\frac{\delta}{2}\int_{0}^{T}\int_{\partial \Omega} \,|\partial_{\nu} u(x,t)|^2 d \mathcal{H}^{d-1}(x)\,dt \leq\\
\left[ \int_{\Omega} u_t(x,t) \left( \nabla u(x,t) \cdot (x-x_0) + \frac{d-1}{2} u(x,t) \right)\,dx\right]_{0}^{T} +
 T\,E_0\,.
\end{multline}
Using Poincar\'e's inequality, it is clear that the RHS of (\ref{hidden2}) can be dominated by $C\,E_0$, for some positive constant $C$; a standard density argument completes the proof.
\end{proof}

\section{One dimensional case}
\label{s:onedimensional}
In the one dimensional case, as stated in Subsection \ref{s:alternating}, w.l.o.g. we set $\Omega=(0,\pi)$. It is well known that the family $\{\sin (n\,x)\,|\,n\in \N\}$ forms a complete orthogonal system of $L^2(0,\pi)$. Indeed, this family consists of eigenfunctions of the operator $d^2/dx^2$ with Dirichlet condition. Thus, the unique solution $u$ for the initial data $(u_0,u_1)\in H^1_0(0,\pi)\times L^2(0,\pi)$ provided by Proposition \ref{prop:well posedness} is given by
\begin{equation}
\label{monodimensionale Fourier}
u(x,t)=\sum_{j=1}^{\infty}\left[ \hat{u}_0(j) \cos (jt) + \frac{1}{j} \hat{u}_1(j)  \sin(jt) \right] \sin(jx)\,;
\end{equation}
here
\begin{equation}
\hat{u}_{h}(j) = \frac{2}{\pi} \int_0^{\pi} u_{h}(x)\,\sin(j \,x)\,dx\,, \qquad h=0,1\,;\, j\in \N\,.
\end{equation}
By a standard approximation argument, we see that the set
\begin{equation*}
\mathcal{H} = \{(u_0,u_1)\,|\, u_0,u_1 \in  \text{Span}\left(\sin(jx)\,|\,j\in \N\right)\,\}\,,
\end{equation*}
is dense in $ H^1_0(0,\pi)\times L^2(0,\pi)$.\\
The proof of Theorem \ref{general theorem} relies on the following observation: if $(u_0,u_1)\in \mathcal{H}$ then 
\begin{equation}
\label{derivative x}
u_x(x,t) = \sum_{j=1}^N \left[ \hat{u}_0(j) \cos (jt) + \frac{1}{j} \hat{u}_1(j)  \sin(jt) \right] j \cos(jx)\,, 
\end{equation}
and
\begin{multline}
\label{u_x(pi)}
u_x(\pi,t) = \sum_{j=1}^N j \left[ \hat{u}_0(j) \cos (jt) \cos(j \pi) + \frac{1}{j} \hat{u}_1(j)  \sin(jt)\cos(j\pi) \right]\\
= \sum_{j=1}^N j \left[ \hat{u}_0(j) \cos (j(t-\pi)) + \frac{1}{j} \hat{u}_1(j)  \sin(j(t-\pi)) \right]
=u_x(0,t-\pi)\,;
\end{multline}
where $N\in \N$ is sufficiently large and the last inequality in \eqref{u_x(pi)} follows by (\ref{derivative x}) with $x=0$.\\
Now we recall the following result, which is an easy application of the Parseval identity in Hilbert spaces (in this case $L^2(0,\pi)$ and $L^2(0,2\pi)$), for a complete proof we refer to \cite{Fourier}.
\begin{proposition}
\label{prop:observability single endpoint}
Let $u$ be the unique weak solution of (\ref{omo1})-(\ref{omo2}) with $\Omega=(0,\pi)$, then
\begin{equation}
\int_{0}^{2\pi}|u_x(0,t)|^2\,dt =2 (\|  u_0'\|_{L^2(0,\pi)}^2+ \|  u_1\|_{L^2(0,\pi)}^2)\,;
\end{equation}
here $(\cdot)'=d(\cdot)/dx$.
\end{proposition}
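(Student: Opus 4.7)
The plan is to start with initial data $(u_0,u_1)\in\mathcal{H}$ so that the Fourier series in \eqref{monodimensionale Fourier} reduces to a finite sum and all termwise manipulations are rigorous, and then extend to general $(u_0,u_1)\in H^1_0(0,\pi)\times L^2(0,\pi)$ by density using the continuity of the trace map $(u_0,u_1)\mapsto\partial_\nu u$ provided by the Hidden Regularity Theorem \ref{t:hidden}.

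For data in $\mathcal{H}$, formula \eqref{derivative x} at $x=0$ gives
\begin{equation*}
u_x(0,t)=\sum_{j=1}^N \bigl[j\hat{u}_0(j)\cos(jt)+\hat{u}_1(j)\sin(jt)\bigr].
\end{equation*}
The key computation is then to integrate $|u_x(0,t)|^2$ over $[0,2\pi]$ and invoke orthogonality. The point is that the system $\{\cos(jt),\sin(jt):j\in\N\}$ is orthogonal in $L^2(0,2\pi)$ with $\int_0^{2\pi}\cos^2(jt)\,dt=\int_0^{2\pi}\sin^2(jt)\,dt=\pi$, while the mixed products vanish. Hence
\begin{equation*}
\int_0^{2\pi}|u_x(0,t)|^2\,dt=\pi\sum_{j=1}^N \bigl[j^2|\hat{u}_0(j)|^2+|\hat{u}_1(j)|^2\bigr].
\end{equation*}

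Next I would match this expression with the right-hand side by applying Parseval's identity on $(0,\pi)$. Since $u_1(x)=\sum_j\hat{u}_1(j)\sin(jx)$ and $\int_0^\pi\sin^2(jx)\,dx=\pi/2$, we get $\|u_1\|_{L^2(0,\pi)}^2=(\pi/2)\sum_j|\hat{u}_1(j)|^2$. Termwise differentiation yields $u_0'(x)=\sum_j j\hat{u}_0(j)\cos(jx)$; since $u_0(0)=u_0(\pi)=0$ there is no zero-frequency term, and $\int_0^\pi\cos^2(jx)\,dx=\pi/2$ for $j\geq1$, so $\|u_0'\|_{L^2(0,\pi)}^2=(\pi/2)\sum_j j^2|\hat{u}_0(j)|^2$. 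Adding gives
\begin{equation*}
\|u_0'\|_{L^2(0,\pi)}^2+\|u_1\|_{L^2(0,\pi)}^2=\frac{\pi}{2}\sum_{j=1}^N\bigl[j^2|\hat{u}_0(j)|^2+|\hat{u}_1(j)|^2\bigr],
\end{equation*}
which is exactly half of the previous display, establishing the identity on $\mathcal{H}$.

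Finally, the identity extends to all of $H^1_0(0,\pi)\times L^2(0,\pi)$ by density of $\mathcal{H}$: the right-hand side is continuous in $(u_0,u_1)$ by construction, and the left-hand side is continuous in $(u_0,u_1)$ by Theorem \ref{t:hidden}, so passing to the limit preserves equality. There is no real obstacle here; the only thing to be careful about is the bookkeeping of the constants ($\pi$ versus $\pi/2$) that arise from the different normalizations of the sine basis on $(0,\pi)$ and the trigonometric basis on $(0,2\pi)$, which accounts for the factor $2$ in the statement.
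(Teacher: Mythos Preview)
Your proof is correct and follows exactly the route the paper indicates: the paper does not spell out a proof of this proposition but describes it as ``an easy application of the Parseval equality in Hilbert spaces (in this case $L^2(0,\pi)$ and $L^2(0,2\pi)$)'' and refers to \cite{Fourier}, which is precisely the computation you carry out. The density extension via Theorem~\ref{t:hidden} is the standard closing step and is consistent with how the paper handles such approximations elsewhere.
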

Now we are ready to prove Theorem \ref{general theorem}:
\begin{proof}[Proof of Theorem \ref{general theorem}]
We first prove that if the condition (\ref{general condition}) holds then the system is $\mathcal{F}$-observable (see Definition \ref{def:observability1}) where $\mathcal{F}$ is as in (\ref{def:F}); in turn we have to prove inequality (\ref{observability0}).\\
Due to Theorem \ref{t:hidden}, we can suppose that $(u_0,u_1)\in \mathcal{H}$. Now, the left hand side of (\ref{observability0}) is equal to 
\begin{equation}
\label{eq:onedimidentity}
\begin{aligned}
\sum_{k=0}^n \int_{I_k} |u_x(\lambda_k,t)|^2\, dt &= \sum_{k=0}^n \int_{I_k} |u_x(0,t-\lambda_k )|^2\,dt\\
&= \sum_{k=0}^n \int_{I_k - \lambda_k } |u_x(0,t)|^2\,dt \\
&= \sum_{k=0}^n \int_{\tilde{I}_k}  |u_x(0,t)|^2\,dt \asymp \int_{\cup_{k=0}^n \tilde{I}_k} |u_x(0,t)|^2\,dt\,.
\end{aligned}
\end{equation}
Here the first equality follows by (\ref{u_x(pi)}), the second from a change of variable in the integrals.
Then it follows by (\ref{general condition}) that
\begin{equation*}
\sum_{k=0}^n \int_{I_k} |u_x(\lambda_k,t)|^2\, dt \asymp \int_{\cup_{k=0}^n \tilde{I}_k} |u_x(0,t)|^2\,dt = \int_{S^1} |u_x(0,t)|^2\,dt = 4 E_0\,,
\end{equation*}
where the last equality follows by Proposition \ref{prop:observability single endpoint}.\\
Conversely, we show that if (\ref{general condition}) does not hold then the observability inequality fails for some initial data. To do this we need the following:
\begin{lemma}
\label{l:lemma proof general condition}
For each open set $U\subset S^1$, there exists a pair $(u_0,u_1)\in H^1_0(0,\pi)\times L^2(0,\pi)$ different from the null pair, such that the corresponding solution $u$ to the problem (\ref{omo1})-(\ref{omo2}) satisfies
\begin{equation}
\text{supp} \,\, u_x(0,\cdot) \subset U\,.
\end{equation}  
\end{lemma}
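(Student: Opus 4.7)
The plan is to prescribe the boundary trace $u_x(0,\cdot)$ directly and then read off the initial data from its Fourier coefficients. From \eqref{derivative x} evaluated at $x=0$, for $(u_0,u_1)\in\mathcal{H}$ one has
\begin{equation*}
u_x(0,t)=\sum_{j=1}^{\infty}\bigl[j\,\hat u_0(j)\cos(jt)+\hat u_1(j)\sin(jt)\bigr],
\end{equation*}
which is $2\pi$-periodic and has zero mean on $S^1$. I would therefore look for $u_x(0,\cdot)$ equal to a prescribed mean-zero function $f$ on $S^1$ with $\text{supp}\,f\subset U$, and recover $(u_0,u_1)$ from the Fourier data of $f$.

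Since $U$ is open and nonempty in $S^1$, it contains two disjoint open arcs, so one can pick nonnegative bumps $\phi_1,\phi_2\in C^{\infty}_c(S^1)$ supported in those arcs with $\int_{S^1}\phi_1=\int_{S^1}\phi_2>0$ and set $f:=\phi_1-\phi_2$. Then $f\in C^{\infty}(S^1)$, $f\not\equiv 0$, $\text{supp}\,f\subset U$ and $\int_{S^1}f=0$. Expanding $f(t)=\sum_{j\geq 1}\bigl[a_j\cos(jt)+b_j\sin(jt)\bigr]$, I would define
\begin{equation*}
u_0(x):=\sum_{j=1}^{\infty}\frac{a_j}{j}\sin(jx),\qquad u_1(x):=\sum_{j=1}^{\infty}b_j\sin(jx).
\end{equation*}
Parseval on $S^1$ yields $\sum_j(|a_j|^2+|b_j|^2)<\infty$, whence $\sum_j j^2|a_j/j|^2<\infty$ and $\sum_j|b_j|^2<\infty$; thus $(u_0,u_1)\in H^1_0(0,\pi)\times L^2(0,\pi)$, and it is nonzero because $f\not\equiv 0$ forces some $a_j$ or $b_j$ to be nonzero.

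The only delicate step is to verify the identity $u_x(0,\cdot)=f$ in $L^2(S^1)$ for this (non-polynomial) initial datum, since the formula derived from \eqref{derivative x} is a priori valid only on the dense subspace $\mathcal{H}$. I would handle this by density: the partial sums $(u_0^N,u_1^N)$ lie in $\mathcal{H}$ and converge to $(u_0,u_1)$ in $H^1_0(0,\pi)\times L^2(0,\pi)$, so by the continuity assertion in Theorem \ref{t:hidden} the associated boundary traces converge to $u_x(0,\cdot)$ in $L^2(0,2\pi)$; on the other hand, they are precisely the Fourier partial sums of $f$, which converge to $f$ in $L^2(S^1)$. Identifying the two limits and observing that $\text{supp}\,f\subset U$ (viewing $u_x(0,\cdot)$ as a $2\pi$-periodic function on $S^1$) gives the claim. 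This density/hidden-regularity interchange is the main, though routine, technical point.
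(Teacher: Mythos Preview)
Your argument is correct, and it reaches the same conclusion as the paper but by a genuinely different route. The paper does not pass through Fourier coefficients at all: after shrinking $U$ so that $\pi\notin U$ and choosing a smooth mean--zero $\psi$ supported in $U$, it exploits the d'Alembert structure of the solution to write, for $t\in(0,\pi)$,
\[
u_x(0,t)=u_0'(t)+u_1(t),\qquad u_x(0,2\pi-t)=u_0'(t)-u_1(t),
\]
and then solves this $2\times 2$ linear system to obtain closed--form expressions
\[
u_0'(t)=\tfrac12\bigl(\psi(t)+\psi(2\pi-t)\bigr),\qquad u_1(t)=\tfrac12\bigl(\psi(t)-\psi(2\pi-t)\bigr).
\]
The hypothesis $\pi\notin U$ together with $\int_0^{2\pi}\psi=0$ is what forces $u_0(\pi)=0$, i.e.\ $u_0\in H^1_0(0,\pi)$.

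What each approach buys: the paper's reflection argument yields explicit smooth initial data and the identity $u_x(0,\cdot)=\psi$ holds pointwise, so no density/hidden--regularity step is needed; the price is the auxiliary reduction $\pi\notin U$. Your Fourier--coefficient construction is arguably more in keeping with the spectral framework of the section and avoids that reduction, but it requires the (routine) passage to the limit via Theorem~\ref{t:hidden} that you flagged. Both are short and self--contained; neither dominates the other.
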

We postpone the proof of the Lemma \ref{l:lemma proof general condition} and continue with the previous discussion. Since, by assumption, the condition (\ref{general condition}) fails, then there exists an open subset $U$ contained in $S^1 \setminus \cup_{j=0}^n \tilde{I}_j$. Then choose $(u_0,u_1)\in H^1_0(0,\pi)\times L^2(0,\pi)$ in accordance with Lemma \ref{l:lemma proof general condition}. By \eqref{eq:onedimidentity}, if the observability inequality (\ref{observability0}) holds then we have 
\begin{equation*}
0=\int_{\cup_{k=0}^n \tilde{I}_k} |u_x(0,t)|^2\,dt\asymp \sum_{k=0}^n\int_{I_k} |u_x(\lambda_k,t)|^2\, dt \geq C E_0 \neq 0\,,
\end{equation*}
which is evidently an inconsistency. The last assertion of Theorem \ref{general theorem} follows by the previous argument.
\end{proof}
\begin{proof}[Proof of Lemma \ref{l:lemma proof general condition}]
We can of course suppose that $\pi \notin U$ (otherwise we can replace $U$ with $U\setminus \overline{V}$, where $V$ is a sufficiently small neighborhood of $\pi$). Let $\psi\not\equiv 0$ be a $ C^{\infty}$-function on $(0,2\pi)$ such that $\text{supp}\psi \subset U$ and $\int_0^{2\pi}\psi(t)\,dt=0$; it is enough to prove that there exists a pair $(\tilde{u}_0,\tilde{u}_1)\in H^1_0(0,\pi)\times L^2(0,\pi)$ such that the weak solution of (\ref{omo1})-(\ref{omo2}) $\tilde{u}$ satisfies $\tilde{u}_x(0,\cdot)=\psi$.\\
Firstly, by the Fourier series representation formula (\ref{monodimensionale Fourier}), we obtain
\begin{equation*}
  \tilde{u}_x(0,t) = \tilde{u}'_0(t) + \tilde{u}_1(t)\,,\qquad t\in(0,\pi)\,.
\end{equation*}
Instead for $t\in(\pi,2\pi)$, set $t=2\pi-\tau$ with $\tau \in (0,\pi)$ and using a simple reflection argument we obtain
\begin{equation*}
  \tilde{u}_x(0,2\pi - \tau) =\tilde{u}_0'(\tau) -\tilde{u}_1(\tau)\,, \qquad \tau \in (0,\pi)\,. 
\end{equation*}
Then $\tilde{u}_x(0,t)=\psi(t)$ is equivalent to
\begin{equation*}
  \psi(t)= \tilde{u}_0'(t)+\tilde{u}_1(t)\,, \qquad \psi(2\pi-t) =  \tilde{u}_0'(t) -\tilde{u}_1(t)\,, \qquad t\in(0,\pi)\,.
\end{equation*}
We immediately obtain
\begin{equation*}
  \tilde{u}'_0 (t) = \frac{1}{2} \left( \psi(t ) + \psi(2\pi-t) \right)\,,\qquad t\in (0,\pi)\,,
\end{equation*}
and
\begin{equation}
\label{phi1}
\tilde{u}_1(t) = \frac{1}{2} \left( \psi(t ) - \psi(2\pi-t) \right)\,, \qquad t\in (0,\pi)\,.
\end{equation}
Then 
\begin{equation}
\label{phi0}
\tilde{u}_0 (x) =  \frac{1}{2} \int_{0}^{x}\left( \psi(t ) + \psi(2\pi-t) \right)\,dt\,, \qquad x\in(0,\pi)\,.
\end{equation}
Since by assumption $\psi$ is a smooth function, $\int_0^{2\pi} \psi(t)dt=0$ and $\pi \notin U \supset \text{supp}\,\psi$; then $\tilde{u}_0\in H^1_0(0,\pi)$.\\ 
Thus, the functions $\tilde{u}_0\in H^1_0(0,\pi)$ and $\tilde{u}_1\in L^2(0,\pi)$, defined respectively in (\ref{phi0}) and (\ref{phi1}), provide the desired pair of initial data.
\end{proof}
Next we prove Corollary \ref{corollaryJ}.
\begin{proof}[Proof of Corollary \ref{corollaryJ}] 
It is sufficient to define recursively $\tilde{J}_0:=\tilde{I}_0$ and
\begin{equation*}
\tilde{J}_k := \tilde{I}_k \setminus \bigcup_{j=0}^{k-1} \tilde{J}_j\,,\qquad k=1,\dots,n\,.
\end{equation*}
The other statement in the Corollary follows easily by this.
\end{proof}
Corollary \ref{c:single exchange} is an easy consequence of Theorem \ref{general theorem} or Corollary \ref{corollaryJ}. Before proving the results of Subsection \ref{ss:costantrate} we introduce some conventions.\\
With an abuse of notation, we denote the subset of $S^1$ mapped from the interval $(\alpha,\beta)\subset \mathbb{R}$ through the application $r$ still as $(\alpha,\beta)$. So we will write
$$
(\alpha,\beta) = (\gamma,\delta)\subset S^1\,;
$$ 
if $r(\alpha)=r(\gamma)$, $r(\beta)=r(\delta)$. With this convention if $\alpha\in(0,2\pi)$, $\beta\in(2\pi,4\pi)$, we have
$$
(\alpha,\beta)= (\alpha,2\pi) \bigcup (0,\beta - 2\pi)\,. 
$$
Moreover, in the following we identify the measure space $(S^1,|\cdot|)$ with the space $([0,2\pi),\mathcal{L}^1)$.\\
\\
We begin with the proof of Lemma \ref{T0particolare} which is the simplest one and permits us to explain in details some technique which will be used also to prove Lemma \ref{l:T0 large} and \ref{T0generale}.
\begin{proof}[Proof of Lemma \ref{T0particolare}]
$i)$ 
First of all, we note that if $k$ is even then the observability subset $\tilde{I}_k$ comes from a subset of observation on the endpoint $x=0$, so we obtain
\begin{align*}
\tilde{I}_0 &= (0,T_0),\,\tilde{I}_2 = (2T_0,3T_0),\dots,\tilde{I}_{2m-2} = (\pi-2T_0,\pi - T_0)\,,
\\
\tilde{I}_{2m} &= (\pi,\pi + T_0)\,,\dots,\tilde{I}_{4m-2} = (2\pi - 2T_0,2\pi- T_0)\,;
\end{align*}
instead, for odd indexes, we have
\begin{align*}
\tilde{I}_1 &= (\pi + T_0,\pi + 2 T_0),\dots,\tilde{I}_{2m-1} = (2\pi-T_0,2\pi),\,\\
\tilde{I}_{2m+1}&=(T_0,2T_0)\,,\tilde{I}_{2m+3}=(3T_0,4T_0),\dots,\\
\tilde{I}_{4m-1} &= ((4m-1)T_0 + \pi, 4mT_0 + \pi) = (\pi - T_0,\pi)\,.
\end{align*}
It is easy to see that $(0,2\pi)$ is covered by the following subsets, in the following order:
\begin{equation*}
  \tilde{I}_{0}\,,\,\tilde{I}_{2m+1}\,,\,\tilde{I}_{2}\,,\,\tilde{I}_{2m+3}\,,\,....\,,\,\tilde{I}_{4m-1}\,,\,\tilde{I}_{2m}\,,
\,
  \tilde{I}_{1}\,,\,\tilde{I}_{2m+2}\,,\,\tilde{I}_{3}\,,\,....\,,\,\tilde{I}_{2m-1}\,.
\end{equation*}
%
The condition \eqref{general condition} in Theorem \ref{general theorem} is satisfied, then the system is observable in $T=2\pi$.\\
Finally we note that $T=2\pi$ is optimal, in fact $(\pi-T_0,\pi)$ is covered only by $\tilde{I}_{4m-1}=((4m-1)T_0, 4mT_0) = (2\pi - T_0,2\pi)$ so the optimality follows by Theorem \ref{general theorem}.\\
\\
$ii)$ We proceed as in $i)$, by listing the subsets of observation coming, through the map $r$, from the time interval $(0,2\pi)$. For even indexes we have
\begin{align*}
\tilde{I}_0&= (0,T_0),\,\tilde{I}_2 = (2T_0,3T_0),\dots,\tilde{I}_{2n} = (\pi-T_0,\pi)\,,\dots\,,\\
\tilde{I}_{2n+2}&=(\pi + T_0,\pi + 2T_0)\,, \dots\,,\\
\tilde{I}_{4n} &= ((4n)T_0,(4n+1)T_0)= (2\pi - 2T_0, 2\pi - T_0)\,;
\end{align*}
instead for odd indexes we have
\begin{align*}
\tilde{I}_1 &= (\pi + T_0,\pi + 2 T_0)\,,\dots,\tilde{I}_{2n-1} = (2\pi-2T_0,2\pi-T_0)\,,\dots\,,\\
\tilde{I}_{2n+1}&=(0,T_0)\,,\dots\,,\\
\tilde{I}_{4n+1} &= ((4n+1)T_0 + \pi, (4n+2) T_0 + \pi) = (\pi - T_0,\pi)\,. 
\end{align*}
It is clear that
\begin{equation*}
\tilde{I}_{0}= \tilde{I}_{2n+1},\dots,\tilde{I}_{2n}= \tilde{I}_{4n+1}\,,\,
\tilde{I}_{2n+2}= \tilde{I}_{1},\dots,\tilde{I}_{2n-1}= \tilde{I}_{4n}\,.
\end{equation*} 
Thus, the following subsets
\begin{equation*}
  (T_0,2T_0)\,,\dots\,,((4n+1)T_0,(4n+2)T_0)
\end{equation*}
cannot be covered by other subsets since the subsets $\{\tilde{I}_k\,:\,k\in \0\}$ are $2\pi$-periodic in time. In view of the necessary condition \ref{general condition} in Theorem \ref{general theorem} the system cannot be observable in any time $T$.\\
\\
$iii)$ Reasoning as in $i)-ii)$, we have for even indexes
\begin{align*}
&\tilde{I}_{0}=(0,T_0)\,,\dots\,,\tilde{I}_{h}=(hT_0,(h+1)T_0)= \left(\pi-\frac{T_0}{2},\pi+\frac{T_0}{2}\right)\,,\dots\,,\\
&\tilde{I}_{2h}=(2hT_0,(2h+1)T_0)= (2\pi-T_0,2\pi)\,,
\end{align*}
where we use that $hT_0=\pi - T_0/2$. Instead for odd indexes, we have
\begin{align*}
&\tilde{I}_{1}=(\pi + T_0,\pi + 2 T_0)\,,\dots\,,\,\\
&\tilde{I}_{h-1}=(\pi +(h-1)T_0,\pi +hT_0)= \left(2\pi - \frac{3}{2}T_0,2\pi - \frac{1}{2}T_0\right)\,,\dots\,,\\
&\tilde{I}_{h+1}=\left(\frac{T_0}{2},\frac{3T_0}{2}\right)\,,\dots\,,\\
&\tilde{I}_{2h-1}=((2h-1)T_0-\pi,2hT_0-\pi)= (\pi-2T_0,\pi-T_0)\,.
\end{align*}
Note that the subset 
\begin{equation}
\label{dim 1.3}
\left(\pi-T_0,\pi-\frac{T_0}{2}\right)
\end{equation}
is one of the subsets not covered by any $\tilde{I}_k$ for $k=1$, \dots, $2h$. In particular, we will see that the subset in (\ref{dim 1.3}) would be the last to be covered among the intervals not yet covered.\\
We now proceed further listing the later subsets (in the index or chronological order). The observability subsets coming from the endpoint $x=0$ are
\begin{align*}
\tilde{I}_{2h+2}=(2\pi+T_0,2\pi + 2T_0)&=(T_0,2T_0)\,,\dots\,,\\
\tilde{I}_{3h}= ((3h)T_0,(3h+1)T_0)&=(2\pi + (h-1)T_0,2\pi + h\,T_0)\\
&= \left(3\pi - \frac{3T_0}{2}, 3\pi - \frac{T_0}{2}\right)\\
&=\left(\pi - \frac{3T_0}{2}, \pi - \frac{T_0}{2}\right)\,;
\end{align*}
instead, from the endpoint $x=\pi$ they are
\begin{equation*}
  \tilde{I}_{2h+1}=(\pi,\pi + T_0)\,,\dots\,,\tilde{I}_{3h-1}=\left(2\pi-\frac{5}{2}T_0,2\pi-\frac{3}{2}T_0\right)\,.
\end{equation*}
It is easy to see that $(0,2\pi)$ is covered by the following subsets, which are ordered e.g., according to their left endpoint as indicated below:
\begin{multline*}
  \tilde{I}_0\,,\,\tilde{I}_{2h+2}\,,\,\tilde{I}_2\,,\,\tilde{I}_{2h+4}\,,\,\dots\,,\,\tilde{I}_{3h}\,,
  \,\tilde{I}_h\,,\,\tilde{I}_{2h+1}\,,\\
  \tilde{I}_{1}\,,\,\tilde{I}_{2h+3}\,,\,\tilde{I}_3\,,\,\dots\,,\,\tilde{I}_{3h-1}\,,\,\tilde{I}_{h-1}\,,\,\tilde{I}_{2h}\,.
\end{multline*}
To compute the optimal control time, we note that the subset with the highest index in the previous sequence is $\tilde{I}_{3h} = r(I_{3h})$; then, by Theorem \ref{general theorem}, $T_{opt}\leq (3h+1)T_0=3\pi - T_0/2$. Moreover, the set 
$$\tilde{I}_{3h}\setminus r((3hT_0,t))\subset(\pi-T_0,\pi-T_0/2)\,,$$ 
is not covered for any $t<(3h+1)T_0=3\pi - T_0/2$. Thus, by Theorem \ref{general theorem}, we have $T_{opt}\geq 3\pi - T_0/2$ and this concludes the proof.\\
\\
$iv)$ The proof is similar to $iii)$. 
In this case, since $h$ is odd, the observability sets $\tilde{I}_{k}$, for $k\leq 2h$, do not cover the subset
\begin{equation*}
  (hT_0,\pi)\,,
\end{equation*}
which is different from the set in (\ref{dim 1.3}) but plays the same role of that set in $iii)$.\\
Note that $\tilde{I}_{3h+1}\supset (hT_0,\pi)$. By Theorem \ref{general theorem}, this implies 
$$T_{opt}\leq (3h+2)T_0=2\pi + \frac{h+1}{2h+1}2\pi\,.$$
Thus, we can use only a portion of $\tilde{I}_{3h+1}$ to cover $(hT_0,\pi)$ (the red part). Indeed,
\begin{equation*}
  \tilde{I}_{3h+1}\supset r((3h+1)T_0,3\pi)) = \left( hT_0,\pi \right)\,.
\end{equation*}
Now the conclusion follows as in $iii)$.
\end{proof}

The previous lemma shows that the $\mathcal{F}$-observability is not expected in general. The main idea is to ``follow" the covering of $S^1$ with $\tilde{I}_k$ when $k$ grows. This will be exploit in the proof of Lemma \ref{l:T0 large} below.

\begin{proof}[Proof of Lemma \ref{l:T0 large}]
Since $\tilde{I}_0=(0,T_0)$, for the $\mathcal{F}$-observability it is sufficient to prove that the subsequent observation sets cover $(T_0,2\pi)$. 

The subset $\tilde{I}_1$ is shifted towards the endpoint $t=2\pi$ with a gap equal to $T_0-\pi>0$; more generally each $\tilde{I}_k$ has the same behaviour with a gap equal to $k(T_0-\pi)$.\\ 
Thus, we are able to cover the missing interval $(T_0,2\pi)$ in a finite time. Following our notation, we can check that for each $k\in \N$,
\begin{equation*}
\tilde{I}_k=(kT_0-k\pi,(k+1)T_0-k\pi)\,.
\end{equation*}

For the optimality, we are interested in the first interval $\tilde{I}_k$ which ``touches" the endpoint $t=2\pi$. More precisely, we denote with $n$ the least integer $k\geq 1$ such that,
\begin{equation*}
(k+1)T_0-k\pi \geq 2\pi \quad \Leftrightarrow \quad  T_0\geq \frac{(k+2)\pi}{(k+1)}\,.
\end{equation*}
By minimality, we have $nT_0-(n-1)\pi<2\pi$, i.e. $T_0<\pi((n+1)/n)$. In turn, $n$ is the unique integer such that
\begin{equation*}
\frac{n+2}{n+1}\pi \leq T_0 <\frac{n+1}{n}\pi\,;
\end{equation*}
which is the condition (\ref{Ngrande}). \\
We now turn to prove the formula for $T_{opt}$. Note that, we do not use all the subset $\tilde{I}_n$ to cover the missing part $(T_0,2\pi)$, since the blue part is an extra observation set not useful to cover $(T_0,2\pi)$. So, the optimal control time is the least $t\geq nT_0$ such that
\begin{equation*}
r((nT_0,t))=(nT_0-n\pi,t-n\pi)\supset (T_0,2\pi)\,.
\end{equation*}
This is equivalent to $t-n\pi \geq 2\pi$, i.e. $t\geq (n+2)\pi$; so $T_{opt}=(n+2)\pi$.
\end{proof} 

The proofs of Lemmas \ref{l:T0 large}-\ref{T0particolare} contain all the ingredients needed to prove Lemma \ref{T0generale}. Indeed, in $i)$ and $iv)$ we exploit the same strategy of Lemma \ref{T0particolare}, i.e. we list the subsets of observation $\tilde{I}_k$ when $k$ grows and we follow the covering of the ``missing subsets" of $S^1$. In such cases, the strategy works since we will discover that $T_{opt}$ is``relatively small". More precisely, we will see $T_{opt}\leq 3\pi$. 

Such strategy is no more efficient in cases $ii)$-$iii)$. On the contrary, the proof is based on a covering analysis of the ``missing sets" of $S^1$, as done in Lemma \ref{l:T0 large} for the set $(T_0,2\pi)$. In Section \ref{s:dependence}, we will see that $T_{opt}$ can be arbitrarily large provided $T_0$, such that $2\pi/T_0\not\in \N$, is suitable. Of course, such $T_0$ verifies $ii)$ or $iii)$.  

\begin{proof}[Proof of Lemma \ref{T0generale}] $i)$ By assumption, we have $N=2j$ and $j\geq 2$. By Remark \ref{r:Topt grater than 2pi} we have $T_{opt}\geq 2\pi$. The observability sets for $t\leq 2\pi$ are 
\begin{equation*}
\tilde{I}_{i}\,,\qquad i=0\,,\dots\,,N\,.
\end{equation*}
With simple considerations, one can check that the above subsets do not cover $S^1$. Namely the following subsets are not covered
\begin{align}
\label{eq:J1}
J_k &:=\left( (j+2)T_0-\pi + kT_0,(2+k)T_0 \right)\,,\\
\label{eq:J2}
J_{j+k}&:=\left((j+1)T_0+kT_0,\pi+(k+1)T_0\right)\,;
\end{align}
where $k=0,2,\dots,j-2$.\\
So, we have to look at the subsequent observability set (in the index order) to show the $\mathcal{F}$-observability. 
Observe that, the first observability set $\tilde{I}_{k}$ with $k\geq 2j$ which intersects $J_0$ is $\tilde{I}_{N+2}$. Moreover,
\begin{multline*}
 \tilde{I}_{N+ 2} \cap J_0 = (\max\{NT_0 + 2T_0 -2\pi,(j+2)T_0-\pi \},2T_0)\\
 =((j+2)T_0-\pi,2T_0) =J_0\,.
\end{multline*}
In fact the second last equality follows by
\begin{equation*}
NT_0 + 2T_0 - 2\pi < (j+2)T_0-\pi \Leftrightarrow (N-j)T_0=jT_0 < \pi\,;
\end{equation*}
and the last inequality is verified (recall the definition (\ref{def:N,j})).\\
Owing to the periodical character of our setting, the same reasoning shows that
\begin{equation*}
\tilde{I}_{N+2+m}\supset J_{m}\,;\qquad m=0,2,\dots,j-2\,.
\end{equation*}
Furthermore, the least time which allows us to cover the subsets $J_{m}$, for $m=0,2,\dots,j-2$, is the least one which ensures to cover $J_{j-2}$ and it is the minimum $t'$ such that
\begin{align*}
r((3jT_0,t'))= (3jT_0-2\pi,t'-2\pi)\supset J_{j-2}=(2jT_0-\pi,jT_0)\,.
\end{align*}
The previous condition is equivalent to $t'-2\pi \geq jT_0$, then $2\pi+jT_0$ is the least time which allows us to cover $J_m$ for $m=0,2,\dots,j-2$.\\
Now, we show that we can cover the subsets $J_m$ with $m=j,j+2,\dots,2j-2$.\\
Again by periodicity, we have only to prove that we can cover $J_{2j-2}$, the previous $J_m$'s are then automatically covered by a simple translation argument. Moreover, it is easy to see that $\tilde{I}_{3j-1}\supset J_{2j-2}$ and the above argument shows that the least time which allows us to cover $J_{2j-2}$ is $2\pi+(j-1)T_0$.\\
Finally, $T_{opt}$ is the least time which allows us to cover \textit{any} missing sets $J_m$'s. By the previous considerations, we have $T_{opt}=2\pi+jT_0$.\\ 
\\
$ii)$ As in $i)$, we have $N=2j$. Reasoning as in $i)$, the following subsets are not covered by $\tilde{I}_{1},\dots,\tilde{I}_N$:
\begin{align}
\label{eq:L1}
L_h&:=((h+1)T_0,(j+2+h)T_0-\pi)\,,\\
\label{eq:Lintermediate}
L_{j-1}&:=(jT_0,(j+1)T_0)\,,\\
\label{eq:L2}
L_{j+1+h}&:=((2+h)T_0+\pi,(j+h+3)T_0)\,;
\end{align}
where $h=0,2,\dots,j-3$. Here, we have assumed $j\geq 3$, the case $j=1$ is simpler and follows by the same argument. Note that, in case $j=1$ the only subset of $S^1$ not covered is $L_{0}:=(T_0,2T_0)$.\\
The idea is to use an argument similar to the one used in Lemma \ref{l:T0 large} on each $L_k$.
Roughly speaking, the clue is that for each $\tilde{I}_k$, ``the corresponding one" in the next period (i.e. after a time $2\pi$) is $\tilde{I}_{k+N}$ and it is ``shifted" by a quantity $2\pi-NT_0$ towards the endpoint $t=0$ with respect to $\tilde{I}_k$ (recall that we have identified the measure space $(S^1,|\cdot|)$ with $([0,2\pi),\mathcal{L}^1)$). A similar argument holds for the subsequent time periods; indeed $\tilde{I}_{k+nN}$ is ``shifted" by a quantity $n(2\pi-NT_0)$ towards the endpoint $t=0$ with respect to $\tilde{I}_k$. This fact will be used to prove that the missing sets in (\ref{eq:L1})-(\ref{eq:L2}) can be covered in a finite time.\\
Furthermore, since $N$ is even, if $\tilde{I}_{k}$ comes from an observation on the endpoint $x=0$ (resp. $x=\pi$) then $\tilde{I}_{k+nN}$ still comes from an observation on $x=0$ (resp. $x=\pi$); this will be useful in the following.\\
Again by periodicity as in the proof of point $i)$, by a shift argument, one can reduce the proof to show that the subsets $L_{j-1}$ and $L_{2j-2}$ are covered in finite time. Furthermore, the least time $t$ which allows us to cover both $L_{j-1}$ and $L_{2j-2}$ will be the optimal control time.\\
For the reader's convenience, we divide the proof into two steps. In the first one, we determine the least $\k\in \N$ which allows us to cover both $L_{j-1},L_{2j-2}$ with subsets in $\{\tilde{I}_k\,:\,k\leq \k\}$. In the second one, we compute $T_{opt}$ by means of Theorem \ref{general theorem}.

\textbf{Step 1}. We first analyse the covering of $L_{j-1}$. By the previous discussion, to cover $L_{j-1}$ we can consider two families of subsets, namely $\H_1:=\{\tilde{I}_{mN+j+1}\,:\,m\in \N\}$ and $\H_2:=\{\tilde{I}_{1+mN}\,:\,m\in \N\}$. Note that the intervals in $\H_1$ come from $x=0$, while ones in $\H_2$ come from $x=\pi$. Let us compute the least $k_i\in \N$ (with $i=1,2$) such that  the subsets in $\{\tilde{I}_k\in \H_i\,:\,k\leq k_i\}$ cover $L_{j-1}$.\\

First consider the family $\H_1$. As usual, we ask for the minimum $m^*>1$ such that the following inclusion holds
\begin{multline*}
\bigcup_{m=0}^{m^*}(\tilde{I}_{mN+j+1}\cap L_{j-1})=\\
(\max\{(m^*N+j+1)T_0-2m^*\pi,jT_0\},(j+1)T_0)\supset L_{j-1}\,.
\end{multline*}
By \eqref{eq:Lintermediate}, this occurs if and only if $(m^*N+j+1)T_0-2m^*\pi \leq jT_0$. Thus $m^*=k^*:=\lceil T_0/(2\pi-NT_0)\rceil$ and
$$
k_1=k^*N+j+1\,.
$$
A similar argument is valid for the family $\H_2$. One discovers that  $k_2=1+h^*N$, where
\begin{equation}
\label{eq:hstarNparijdispari}
h^*:=\Big\lceil\frac{\pi-jT_0+T_0}{2\pi-NT_0}\Big\rceil=\Big\lceil \frac12 + \frac{T_0}{2\pi-NT_0}\Big\rceil\,.
\end{equation}
All together, $\tilde{k}:=\min\{k_1,k_2\}$ is the least integer $h$ such that $\{\tilde{I}_k\,:\,k\leq h\}$ covers $L_{j-1}$.
For $L_{2j-2}$ a similar argument applies with respect to the families $\{\tilde{I}_{N+mN}\,:\,m\in \N\}$ and $\{\tilde{I}_{j+mN}\,:\,m\in \N\}$. One readily checks that, for any $T_0$, the family $\{\tilde{I}_k\,:\,k< \tilde{k}\}$ covers $L_{2j-2}$. This implies $\k=\tilde{k}$.
\textbf{Step 2}. We are now in a position to compute $T_{opt}$. Below $k_i$ (with $i=1,2$) are as in \textbf{Step 1}. By \eqref{eq:hstarNparijdispari} one has $k^*\leq h^*\leq k^*+1$, thus we can divide the analysis into two cases:

\begin{itemize}
\item If $h^*=k^*+1$, then $k_1< k_2$ and $\k=k_1$. One can check that,
\begin{equation*}
T_{opt}=2k^* \pi + (N+1)T_0-\pi + (k^*-1)(NT_0-2\pi)=\pi+T_0(Nk^*+1)\,.
\end{equation*}

\item  If $k^*=h^*$, then $k_2<k_1$  and $\k=k_2$. One can check that $T_{opt}$ verifies
\begin{equation*}
T_{opt}+\pi=2\pi k^*+(j+1)T_0+(k^*-1)(NT_0-2\pi)=2\pi+(Nk^*-j+1)T_0\,.
\end{equation*}
\end{itemize}
This completes the proof of $ii)$.\\

$iii)$ Since $N$ is odd, one has $N=2j+1$. Note that the observability sets which come from time less than $2\pi$ do not cover:
\begin{align}
\label{eq:F1}
F_h&:=((j+2)T_0-\pi + hT_0,(h+2)T_0)\,,\\
\label{eq:Fint}
F_{j+h}&:=((j+1+h)T_0,(h+1)T_0+\pi)\,,\\
\label{eq:F2}
F_{2j}&:=(NT_0,2\pi)\,.
\end{align}
where $h=0,2,\dots,j-2$.
We want to emulate the proof of $ii)$. We have only to pay attention to one issue: since $N$ is odd, if $\tilde{I}_k$ comes from an observation at $x=0$ (resp. $x=\pi$), then $\tilde{I}_{k+N+1}$ still comes from $x=0$ (resp. $x=\pi$).
The difference is that, $\tilde{I}_{k+N+1}$ is now ``shifted" towards the endpoint $t=2\pi$ by a quantity $(N+1)T_0-2\pi>0$ with respect to $\tilde{I}_{k}$.\\
By periodicity, the problem of finding $T_{opt}$ can be reduced to finding the minimum value of the time which allows us to cover $F_{j-2}, F_{2j-2}, F_{2j}$.\\
As in $ii)$, we split the proof into two steps. In the first one, we compute the least $\h\in \N$ such that $F_{j-1},F_{2j-2}$ and $F_{2j}$ are covered by subsets in $\{\tilde{I}_k\,:\,k\leq \h\}$. In the second one, $T_{opt}$ is found using Theorem \ref{general theorem}.\\
\\
\textbf{Step 1}.
We first analyse the covering of $F_{j-2}$. 
By the initial discussion, we can only consider the families of subsets $\I_1:=\{\tilde{I}_{j-2+n(N+1)}\,:\,n>0\}$ and $\I_2:=\{\tilde{I}_{N-2+n(N+1)}\,:\,n>0\}$. Each family provides an exhaustion of the missing set $F_{j-2}$ ``from the left to the right". Let us compute the least $h_i\in \N$ (with $i=1,2$) such that the subsets in $\{\tilde{I}_k\in \I_i\,:\,k\leq h_i\}$ cover $F_{j-2}$.\\
First consider the family $\I_1$. As usual, we ask for the minimum $n^*$ such that the following inclusion holds
\begin{multline*}
\bigcup_{n=0}^{n^*} (\tilde{I}_{j-2+n(N+1)}\cap F_{j-2}) =\\
 ((N-1)T_0-\pi,\min\{(j-1)T_0+ n^* [(N+1)T_0-2\pi],jT_0\})\supset F_{j-2}\,.
\end{multline*}  
By \eqref{eq:F1}, the previous inclusion holds if and only if $(j-1)T_0+ n^* [(N+1)T_0-2\pi]\geq jT_0$. Thus $n^*=l^*:=\lceil T_0/((N+1)T_0-2\pi)\rceil$ and
\begin{equation*}
h_1= (j-2)+l^*(N+1)\,.
\end{equation*}
Similar considerations hold for the family $\I_2$. One can readily check that
$$
h_2=N-2+q^*(N+1)\,,
$$
where 
\begin{equation}
\label{eq:qstarNdisparijpari}
q^*=\Big\lceil \frac{\pi-jT_0}{(N+1)T_0-2\pi}\Big\rceil = \Big\lceil \frac{T_0}{(N+1)T_0-2\pi}-\frac12\Big\rceil\,.
\end{equation}
All together, $\tilde{h}:=\min\{h_1,h_2\}$ is the least integer $m$ such that $\{\tilde{I}_k\,:\,k\leq m\}$ covers $F_{j-2}$. For $F_{2j-2}$ (resp. $F_{2j}$) a similar analysis applies with respect to the two families $\{\tilde{I}_{N-3+n(N+1)}\,:\,n\geq 1\}$, $\{\tilde{I}_{j-3+n(N+1)}\,:\,n\geq 1\}$ (resp. $\{\tilde{I}_{N-1+n(N+1)}\,:\,n\geq 1\}$, $\{\tilde{I}_{j-1+n(N+1)}\,:\,n\geq 1\}$). One can readily check that $\{\tilde{I}_k\,:\,k<\tilde{h}\}$ cover both $F_{2j-2}$ and $F_{2j}$. This implies $\h=\tilde{h}$.

\textbf{Step 2}. We compute $T_{opt}$. By \eqref{eq:qstarNdisparijpari} one has $q^*\leq l^*\leq q^*+1$, thus we have the following situations:
\begin{itemize}
\item If $l^*=q^*$, then $h_1< h_2$ and $\h=h_1$. One can check that
$$
T_{opt}=2\pi q^* + jT_0\,.
$$
\item If $q^*+1=l^*$, then $h_2<h_1$ and $\h=h_2$. Thus, $T_{opt}$ verifies
$$
T_{opt}-\pi=2\pi q^*+jT_0\,.
$$
\end{itemize}
This closes the proof of $iii)$.\\

$iv)$ (Sketch). Since $N$ is odd; then $N=2j+1$. Moreover, the sets $\tilde{I}_k$ with $k=0,\dots,N$, do not cover $[0,2\pi)$.  Namely the following subsets are not covered:
\begin{align*}
G_h&:=(T_0+hT_0,(j+2)T_0+hT_0-\pi)\,,&\quad & h=0,2,\dots,j-1	\,;\\
G_{j+1+h'}&:=(\pi+2T_0+h'T_0,(j+3)T_0+h'T_0)\,,&\,& h'=0,2,\dots,j-3\,.
\end{align*}
Note that $G_{j-1}=(jT_0,(2j+1)T_0-\pi)$ and is nonempty. As in $ii)$ we have assumed that $j\geq 3$, in the case $j=1$ the only subset to cover is $G_0:=(T_0,3T_0-\pi)$. 
Again, by a simple translation argument, we have only to study the covering of the sets $G_{j-1},G_{2j-2}$. Moreover, the least time $t$ which ensures that both sets are covered will provide the optimal control time $T_{opt}$. Lastly, $\pi+NT_0$ (resp.  $\pi+2jT_0$) is enough to cover $G_{j-1}$ (resp. $G_{2j-2}$). Since, $\pi+2jT_0<\pi+NT_0$, the claim is proven.
\end{proof}
\section{Dependence of $T_{opt}$ on $T_0$}
\label{s:dependence}
In the situation of Subsection \ref{ss:costantrate}, by Lemmas \ref{T0particolare}-\ref{T0generale}, for each $T_0 \in (0,2\pi)\setminus \{\frac{\pi}{2n+1}\,:\,n\in\0\}$, the optimal control time $T_{opt}$ is well defined. Thus, we may regard $T_{opt}$ as an application
\begin{equation}
  T_{opt}:(0,2\pi)\setminus \Big\{\frac{\pi}{2n+1}\,:\,n\in\0\Big\} \rightarrow \mathbb{R}\,,
  \qquad
T_0 \mapsto T_{opt}\,,
\end{equation}
and write it as $T_{opt}(T_0)$. Interestingly the formulas in Lemmas \ref{l:T0 large}-\ref{T0generale} permit us to study the behaviour of the map $T_{opt}(\cdot)$. Firstly, we consider the case $T_0\in (\pi,2\pi)$:

\begin{proposition}
The map $ T_{opt}(\cdot)$ on $(\pi,2\pi)$ admits a countable set of discontinuities in 
$$
\lambda_n :=\frac{n+2}{n+1}\pi\,, \qquad n\in \N\,.
$$
Moreover $\lim_{t\searrow \lambda_n} T_{opt}(t)=\pi(n+2)$ and $\lim_{t\nearrow \lambda_n} T_{opt}(t)=\pi(n+3)$.
\end{proposition}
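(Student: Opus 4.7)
The proof reduces almost entirely to unpacking Lemma \ref{l:T0 large}, which already gives an explicit formula for $T_{opt}(T_0)$ on $(\pi,2\pi)$. My plan is to first re-parametrize the lemma so that it reads as a piecewise-constant description of $T_{opt}(\cdot)$, then read off discontinuities and one-sided limits.

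First I would observe that for $T_0 \in (\pi, 2\pi)$, Lemma \ref{l:T0 large} asserts that $T_{opt}(T_0) = (n+2)\pi$, where $n=n(T_0)\in\N$ is uniquely determined by
\[
\frac{n+2}{n+1}\pi \leq T_0 < \frac{n+1}{n}\pi.
\]
With $\lambda_n := \frac{n+2}{n+1}\pi$ this says exactly that $n(T_0)=n$ precisely for $T_0 \in [\lambda_n,\lambda_{n-1})$ (taking the convention $\lambda_0:=2\pi$). Hence $T_{opt}(\cdot)$ is constantly equal to $(n+2)\pi$ on the half-open interval $[\lambda_n,\lambda_{n-1})$, and in particular it is continuous on every open subinterval $(\lambda_n,\lambda_{n-1})$.

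Next I would compute the one-sided limits at each $\lambda_n$. Since on a right neighborhood $(\lambda_n,\lambda_n+\varepsilon)\subset(\lambda_n,\lambda_{n-1})$ one has $T_{opt}\equiv(n+2)\pi$, it follows that $\lim_{t\searrow \lambda_n} T_{opt}(t)=(n+2)\pi$. On a left neighborhood $(\lambda_n-\varepsilon,\lambda_n)$, for sufficiently small $\varepsilon$ we have $(\lambda_n-\varepsilon,\lambda_n)\subset(\lambda_{n+1},\lambda_n)$ because $\lambda_{n+1}<\lambda_n$ (the sequence $\{\lambda_n\}$ is strictly decreasing and accumulates only at $\pi$ as $n\to\infty$); therefore $n(T_0)=n+1$ on this interval, giving $T_{opt}\equiv(n+3)\pi$ there, and hence $\lim_{t\nearrow \lambda_n} T_{opt}(t)=(n+3)\pi$. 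The two limits differ by $\pi\neq 0$, so each $\lambda_n$ is a genuine discontinuity.

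Finally, I would argue these exhaust all discontinuities: the set $\{\lambda_n : n\in\N\}\subset(\pi,2\pi)$ is countable and its complement in $(\pi,2\pi)$ is the disjoint union $\bigcup_{n\geq 1}(\lambda_n,\lambda_{n-1})$, on each piece of which $T_{opt}$ is constant and hence continuous. The only subtle point — which is the main (and minor) obstacle — is simply to keep the indexing straight at the boundary points $\lambda_n$ themselves: the closed-left/open-right convention coming from the inequality \eqref{Ngrande} must be respected, so that $T_{opt}(\lambda_n)=(n+2)\pi$ agrees with the right limit, making the jump one-sided.
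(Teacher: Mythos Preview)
Your proof is correct and follows exactly the paper's approach: the paper's own proof is the single line ``The proof follows easily from Lemma \ref{l:T0 large},'' and you have simply unpacked that lemma carefully, which is the only thing there is to do. Your indexing and one-sided-limit bookkeeping are all accurate.
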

\begin{proof}
The proof follows easily from Lemma \ref{l:T0 large}.
\end{proof}
Below, we will focus on the case $T_0\in (0,\pi)$. We start the analysis of the map focusing on the values of $T_0\in(0,\pi)$ for which $\lfloor \pi/T_0 \rfloor$ has a jump.
\begin{proposition}
\label{prop:discontinuita 1}
Let $T_0 = \pi/2n$ with $n\in \N$. Then $T_{opt}(T_0)=2\pi$, 
\begin{align}
\label{prop:lim 1 pari}
\lim_{t \nearrow T_0} T_{opt}(t)&=3\pi\,,\\
\label{prop:lim 2 pari}
\lim_{t \searrow T_0} T_{opt}(t)&=3\pi - \frac{\pi}{2n}\,.
\end{align} 
\end{proposition}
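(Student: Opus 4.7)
The value $T_{opt}(\pi/(2n))=2\pi$ is immediate from Lemma \ref{T0particolare}$i)$, so the content of the proposition lies in the two one-sided limits. The plan is to identify, on each side of $T_0=\pi/(2n)$, a small open sub-interval on which the integer quantities $N(T_0)=\lfloor 2\pi/T_0\rfloor$ and $j(T_0)=\lfloor \pi/T_0\rfloor$ are constant; then Lemma \ref{T0generale} provides an explicit formula for $T_{opt}$, and the limits are obtained by substitution.

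On the left, I would work in $T_0\in(2\pi/(4n+1),\pi/(2n))$: there $2\pi/T_0\in(4n,4n+1)$ and $\pi/T_0\in(2n,2n+\tfrac12)$, so $N=4n$ and $j=2n$, both even, and in particular $2\pi/T_0\notin\N$. Hence Lemma \ref{T0generale}$i)$ applies and gives $T_{opt}(T_0)=2\pi+jT_0=2\pi+2n\,T_0$. Letting $T_0\nearrow\pi/(2n)$ yields (\ref{prop:lim 1 pari}).

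On the right, I would work in $T_0\in(\pi/(2n),2\pi/(4n-1))$: here $2\pi/T_0\in(4n-1,4n)$ and $\pi/T_0\in(2n-\tfrac12,2n)$, so $N=4n-1$ and $j=2n-1$, both odd, with again $2\pi/T_0\notin\N$. Lemma \ref{T0generale}$iv)$ then yields $T_{opt}(T_0)=N\,T_0+\pi=(4n-1)T_0+\pi$, and letting $T_0\searrow\pi/(2n)$ gives (\ref{prop:lim 2 pari}).

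The only point requiring care is that each sub-interval chosen above contains no exceptional values coming from Lemma \ref{T0particolare} (i.e., values of the form $\pi/(2m)$, $\pi/(2m+1)$ or $2\pi/(2h+1)$), so that the formulas of Lemma \ref{T0generale} really describe $T_{opt}$ throughout. This reduces to the arithmetic question of which integers can lie in the narrow windows $(4n,4n+1)$, $(2n,2n+\tfrac12)$, $(4n-1,4n)$, $(2n-\tfrac12,2n)$; none can, so the check is routine rather than a genuine obstacle.
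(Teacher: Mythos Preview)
Your proposal is correct and follows essentially the same approach as the paper: both identify $N=4n$, $j=2n$ on the left of $T_0$ and apply Lemma~\ref{T0generale}$i)$, and $N=4n-1$, $j=2n-1$ on the right and apply Lemma~\ref{T0generale}$iv)$, then pass to the limit. The only cosmetic difference is that the paper parametrizes nearby points as $t=\pi/(2n\pm\varepsilon)$ rather than fixing explicit sub-intervals; your final paragraph about excluding exceptional values is in fact subsumed by the hypothesis $2\pi/T_0\notin\N$ of Lemma~\ref{T0generale}, which your interval choices already guarantee.
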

\begin{proof}
The first assertion is the content of $i)$ in Lemma \ref{T0particolare}. Instead in order to prove (\ref{prop:lim 1 pari}), we note that if $t$ belongs to a left neighborhood of $T_0$, we can write
\begin{equation*}
t = \frac{\pi}{2n+\varepsilon}\,,
\end{equation*}
for an appropriate $\varepsilon >0$; moreover $t\nearrow T_0$ if and only if $\varepsilon \searrow 0$.\\
Note that
\begin{equation*}
N(t)= 4n\,, \qquad j(t)=2n\,,
\end{equation*}
where $N$, $j$ are defined in (\ref{def:N,j}). Applying $i)$ in Lemma \ref{T0generale}, we obtain
\begin{equation*}
T_{opt}(t)=2\pi +2n\,\frac{\pi}{2n+\varepsilon}\,,
\end{equation*}
taking the limit for $\varepsilon \searrow 0$, one readily infers (\ref{prop:lim 1 pari}).\\
The limit in (\ref{prop:lim 2 pari}) is proved analogously. In fact let $t$ be in a small right neighborhood of $T_0$. So we can write $t$ as
\begin{equation*}
t = \frac{\pi}{2n-\varepsilon}\,,
\end{equation*}
for a small $\varepsilon >0$; as before $t\searrow T_0$ if and only if $\varepsilon \searrow 0$. By (\ref{def:N,j}) we have 
\begin{equation*}
N(t)= 4n-1\,, \qquad j(t)=2n-1\,.
\end{equation*}
Since $N$, $j$ are odd we can apply $iv)$ in Lemma \ref{T0generale}. So we get
\begin{equation*}
T_{opt}(t)=\pi + \frac{(4n-1)}{2n-\varepsilon}\pi \rightarrow \pi+\frac{(4n-1)}{2n}\pi  \,,
\end{equation*}
as $\varepsilon \searrow 0$.
\end{proof}
Lemma \ref{T0generale} and the argument used in the proof of Proposition \ref{prop:discontinuita 1} yield also the following results. The proof of Proposition \ref{prop:discontinuita 2} can be obtained easily by checking the behaviour of $k^*$ and $q^*$ in Lemma \ref{T0generale}.

\begin{proposition}
\label{prop:discontinuita 2}
Let $T_0=\pi/(2n+1)$ with $n \in \0$. Then
\begin{align}
\lim_{t\nearrow T_0} T_{opt}(t) = +\infty\,,\\
\lim_{t\searrow T_0} T_{opt}(t) = +\infty\,.
\end{align}
\end{proposition}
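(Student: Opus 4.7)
The plan is to reduce both limits to the formulas for $T_{opt}$ given in Lemma \ref{T0generale} and exploit the fact that the denominators appearing there vanish as $t\to T_0 = \pi/(2n+1)$.

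First I would determine which case of Lemma \ref{T0generale} applies on each side of $T_0$. At $T_0 = \pi/(2n+1)$ one has $2\pi/T_0 = 4n+2$ and $\pi/T_0 = 2n+1$. For $t$ in a small right neighborhood of $T_0$, $2\pi/t$ drops just below $4n+2$ and $\pi/t$ just below $2n+1$, so $N(t) = 4n+1$ (odd) and $j(t) = 2n$ (even); this is case $iii)$. For $t$ in a small left neighborhood, $2\pi/t$ is slightly above $4n+2$ and $\pi/t$ slightly above $2n+1$, so $N(t) = 4n+2$ (even) and $j(t) = 2n+1$ (odd); this is case $ii)$.

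Next I would observe that the ``denominators'' $2\pi - N(t)\,t$ (case $ii)$) and $(N(t)+1)\,t - 2\pi$ (case $iii)$) both tend to $0^+$ as $t\to T_0$. Indeed, in both cases the relevant quantity is $|(4n+2)t - 2\pi|$, which vanishes at $t=T_0$. Meanwhile the corresponding numerators are bounded below by a positive constant uniformly in a neighborhood of $T_0$: the numerator $T_0$ in $k^*$ and $l^*$ is obviously bounded away from $0$, and for $q^*$ the numerator $\pi - j(t)\,t = \pi - 2n\,t \to \pi - 2n\pi/(2n+1) = \pi/(2n+1) > 0$. Therefore $k^*, q^*, l^*$ all diverge to $+\infty$ as $t\to T_0$ from the appropriate side.

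Finally I would conclude by plugging into the $T_{opt}$ formulas. In case $ii)$, either branch gives $T_{opt}(t) = \pi + T_0\bigl(N k^* + O(1)\bigr)$, which tends to $+\infty$ since $k^*\to\infty$ and $N\,T_0$ is bounded away from $0$. Likewise in case $iii)$, either branch yields $T_{opt}(t) \geq 2\pi q^*$, so $T_{opt}(t)\to +\infty$. This establishes both limits. The only mild subtlety is making sure the numerator in $q^*$ does not collapse to $0$ simultaneously with the denominator, which is checked by the explicit computation above; this is the step I expect to need the most care.
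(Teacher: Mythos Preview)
Your argument is correct and is precisely the approach the paper indicates: the paper simply says the result follows by checking the behaviour of $k^*$ and $q^*$ in Lemma~\ref{T0generale}, which is exactly what you do (with the added care of verifying that the numerators stay bounded away from zero while the denominators $|(4n+2)t-2\pi|$ vanish). The only minor point to watch is the edge case $n=0$: the right-hand limit at $T_0=\pi$ lands in the range $t\in(\pi,2\pi)$, outside the scope of Lemma~\ref{T0generale}, and there you should invoke Lemma~\ref{l:T0 large} instead---the integer in \eqref{Ngrande} diverges as $t\searrow\pi$, giving the same conclusion.
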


\begin{proposition}
\label{prop:discontinuita 3}
Let $T_0 = 2\pi / (2h +1)$ with $h \in \N$. Then\\
$i)$ If $h$ is even, 
\begin{align}
\lim_{t \nearrow T_0} T_{opt}(t)&=3\pi + \frac{2 h \pi}{2h +1}\,,\\
T_{opt}(T_0)=\lim_{t \searrow T_0} T_{opt}(t)&=2\pi + \frac{2h \pi}{2h +1}\,.
\end{align}
$ii)$ If $h$ is odd, we have
\begin{align}
T_{opt}(T_0)=\lim_{t \nearrow T_0} T_{opt}(t)&=3\pi\,,\\
\lim_{t \searrow T_0} T_{opt}(t)&=3\pi + \frac{2\pi h}{2h+1} \,.
\end{align}
\end{proposition}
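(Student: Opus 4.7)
The plan is to argue entirely by direct substitution into Lemmas~\ref{T0particolare} and~\ref{T0generale}, treating the value $T_{opt}(T_0)$ and the two one-sided limits separately, with the key book-keeping being how the integers $N(t)$ and $j(t)$ defined in \eqref{def:N,j} jump across the critical point $T_0=2\pi/(2h+1)$.

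First I would record the behaviour of $N$ and $j$ near $T_0$. Since $2\pi/T_0=2h+1$ and $\pi/T_0=h+\tfrac12$, for $t$ in a sufficiently small neighbourhood of $T_0$ one has $N(t)=2h+1$ and $j(t)=h$ for $t<T_0$, whereas $N(t)=2h$ and $j(t)=h$ for $t>T_0$. For the value $T_{opt}(T_0)$ itself, I would simply quote Lemma~\ref{T0particolare}: case $iii)$ if $h$ is even, giving $T_{opt}(T_0)=3\pi-T_0/2=2\pi+2h\pi/(2h+1)$, and case $iv)$ if $h$ is odd, giving $T_{opt}(T_0)=3\pi$.

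For the one-sided limits when $h$ is \emph{even}, the right limit falls under case $i)$ of Lemma~\ref{T0generale} (both $N=2h$ and $j=h$ even), which yields $T_{opt}(t)=2\pi+jt=2\pi+ht$ and hence the limit $2\pi+2h\pi/(2h+1)$; this already equals $T_{opt}(T_0)$, verifying continuity from the right. The left limit falls under case $iii)$ (with $N=2h+1$ odd, $j=h$ even). Writing $t=T_0-\varepsilon$, a short computation shows $(N+1)t-2\pi=T_0-(2h+2)\varepsilon$, so
\begin{equation*}
\frac{t}{(N+1)t-2\pi}\longrightarrow 1^{+},\qquad \frac{\pi-jt}{(N+1)t-2\pi}\longrightarrow \frac{1}{2}^{+},
\end{equation*}
giving $l^{*}=2$ and $q^{*}=1$, i.e.\ the subcase $q^{*}+1=l^{*}$. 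Lemma~\ref{T0generale}$iii)$ then delivers $T_{opt}(t)=2\pi q^{*}+jt+\pi\to 3\pi+2h\pi/(2h+1)$, which is the advertised jump.

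For $h$ \emph{odd} the roles interchange. The left limit is covered by case $iv)$ of Lemma~\ref{T0generale} ($N=2h+1$, $j=h$ both odd), yielding $T_{opt}(t)=Nt+\pi\to(2h+1)T_0+\pi=3\pi$, matching $T_{opt}(T_0)$. The right limit falls under case $ii)$ ($N=2h$ even, $j=h$ odd). With $t=T_0+\varepsilon$ one checks $2\pi-Nt=T_0-2h\varepsilon$ and
\begin{equation*}
\frac{t}{2\pi-Nt}\longrightarrow 1^{+},\qquad \frac{\pi-jt+t}{2\pi-Nt}\longrightarrow \frac{3}{2}^{\pm},
\end{equation*}
so that $k^{*}=h^{*}=2$, placing us in the subcase $h^{*}=k^{*}$. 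Lemma~\ref{T0generale}$ii)$ then gives $T_{opt}(t)=\pi+t(Nk^{*}-j+1)\to \pi+T_0(3h+1)=3\pi+2h\pi/(2h+1)$, as claimed. The only real obstacle is getting the signs in the ceiling computations right so as to select the correct subcase of Lemma~\ref{T0generale}; everything else is substitution.
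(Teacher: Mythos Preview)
Your argument is correct and is precisely the approach the paper indicates: the paper does not spell out a proof of this proposition but states that it follows from Lemma~\ref{T0generale} together with the method of Proposition~\ref{prop:discontinuita 1}, and that is exactly what you carry out by tracking $N(t),j(t)$ across $T_0$ and feeding them into the appropriate cases of Lemmas~\ref{T0particolare} and~\ref{T0generale}. One cosmetic remark: in the $h$ odd, right-limit computation you write the limit of $(\pi-jt+t)/(2\pi-Nt)$ as $\tfrac{3}{2}^{\pm}$, but in fact it approaches $\tfrac{3}{2}$ from above; this does not affect the conclusion since $\lceil \tfrac{3}{2}\pm\varepsilon\rceil=2$ either way, so $h^{*}=k^{*}=2$ regardless.
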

Perhaps unexpectedly, there are other values of $T_0$ for which the map $T_0 \mapsto T_{opt}(T_0)$ is not continuous. For future convenience, we introduce the following notation
\begin{equation*}
  A_{k} =\left(\frac{\pi}{k+\frac{1}{2}}\,, \, \frac{\pi}{k} \right)\,,
  \qquad
  B_{k} = \left(\frac{\pi}{k+1}\,, \, \frac{\pi}{k+\frac{1}{2}} \right)\,,
  \qquad
  k\in\N
  \,.
\end{equation*}
Obviously 
\begin{equation}
\label{decomposition}
(0,\pi] = \Big[\cup_{k\in\N} (A_k \cup B_k) \Big]
\bigcup
\Big[\cup_{k\in\N}\Big\{\frac{\pi}{k}\,,\,\frac{\pi}{k+\frac{1}{2}} \Big\}\Big]\,.
\end{equation}
Note that the points in the set $\cup_{k\in\N}\Big\{\frac{\pi}{k}\,,\,\frac{\pi}{k+\frac{1}{2}} \Big\}$ have been considered in Propositions \ref{prop:discontinuita 1}-\ref{prop:discontinuita 3}. 
Next we look at the behaviour of the map $T_0\mapsto T_{opt}(T_0)$ in the sets $A_{k}$, $B_{k}$.
\begin{proposition}
\label{prop:discontinuita strane1}
For all $k\in 2\N$ (resp. $k \in 2\0+1$), the map $T_0\mapsto T_{opt}(T_0)$ is continuous on each interval $A_k$ (resp. $B_k$) .
\end{proposition}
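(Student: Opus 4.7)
The plan is to read off the values of $N(T_0)$ and $j(T_0)$ on each of the intervals $A_k$, $B_k$, observe that both functions are locally constant there, and then invoke the appropriate clause of Lemma \ref{T0generale} — whose formula in the selected cases turns out to be \emph{affine} in $T_0$.

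First I would verify the two computations. For $T_0 \in A_k = (\pi/(k+\tfrac12), \pi/k)$, one has $\pi/T_0 \in (k, k+\tfrac12)$, hence $j(T_0) = k$; and $2\pi/T_0 \in (2k, 2k+1)$, hence $N(T_0) = 2k$. Analogously, for $T_0 \in B_k = (\pi/(k+1), \pi/(k+\tfrac12))$, one gets $\pi/T_0 \in (k+\tfrac12, k+1)$, so $j(T_0) = k$, and $2\pi/T_0 \in (2k+1, 2k+2)$, so $N(T_0) = 2k+1$. Both assignments are constant on the respective open intervals.

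Now suppose $k \in 2\N$ and $T_0 \in A_k$. Then $N = 2k$ and $j = k$ are both even, so clause $i)$ of Lemma \ref{T0generale} applies and gives
\[
T_{opt}(T_0) = 2\pi + j\,T_0 = 2\pi + k\,T_0,
\]
which is affine, hence continuous, on $A_k$. Suppose instead $k \in 2\0+1$ and $T_0 \in B_k$. Then $N = 2k+1$ and $j = k$ are both odd, so clause $iv)$ of Lemma \ref{T0generale} applies and gives
\[
T_{opt}(T_0) = N\,T_0 + \pi = (2k+1)\,T_0 + \pi,
\]
again affine, hence continuous, on $B_k$.

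I do not expect a genuine obstacle here; the whole content is that, under the prescribed parity restriction, we always fall into the ``clean'' cases $i)$ and $iv)$ of Lemma \ref{T0generale}, rather than into the cases $ii)$ or $iii)$ that involve the ceiling-valued integers $h^*, k^*, l^*, q^*$ and are the source of the discontinuities catalogued in Propositions \ref{prop:discontinuita 1}--\ref{prop:discontinuita 3}. The only point that deserves a line of justification is the constancy of $N$ and $j$ on each $A_k$, $B_k$, which is immediate from the definitions (\ref{def:N,j}).
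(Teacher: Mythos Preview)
Your argument is correct and is precisely the paper's approach: the paper merely states that the result follows straightforwardly from Lemma~\ref{T0generale} once one observes that $N=2k$, $j=k$ on $A_k$ and $N=2k+1$, $j=k$ on $B_k$, and you have spelled this out in full, landing in clauses $i)$ and $iv)$ exactly as intended.
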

The proof of Proposition \ref{prop:discontinuita strane1} follows straightforwardly from Lemma \ref{T0generale} when we keep in mind that: in $A_{k}$ we have $N=2k$, $j=k$, while in $B_{k}$ we have $N=2k+1$, $j=k$. Finally we explicitly list the discontinuities.
\begin{proposition}
\label{prop:discontinuita strane2}
If $k\in 2\0+1$, in each interval $A_k$, the map $T_{opt}(\cdot)$ admits a countable set of discontinuities in
\begin{equation}
\label{eq:discontinuityA}
\mu_m :=\frac{\pi}{k+\left(\frac{1}{m+2}\right)}\,, \qquad m\in \N\,.
\end{equation}
Furthermore, for each $m\in \N$, 
\begin{align*}
\lim_{t \nearrow \mu_{m}} T_{opt}(t)&=  \pi+ \mu_{m}(k(m+2)+1)\,,\\
\lim_{t \searrow \mu_{m}} T_{opt}(t)&= \pi+ \mu_{m}(k(m+3)+1)\,.
\end{align*}
In particular, $\lim_{t \searrow \mu_{m}} T_{opt}(t)-\lim_{t \nearrow \mu_{m}} T_{opt}(t)=k \mu_{m}$.
\end{proposition}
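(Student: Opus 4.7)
The plan is to apply case $ii)$ of Lemma~\ref{T0generale} throughout $A_k$ and to track the jumps of the two ceiling functions $k^*$ and $h^*$ appearing in the formula for $T_{opt}$.

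First I would verify that for $k\in 2\0+1$ and $T_0\in A_k=(\pi/(k+\tfrac12),\pi/k)$ the constants in \eqref{def:N,j} take the uniform values $N=2k$ (even) and $j=k$ (odd), so case $ii)$ of Lemma~\ref{T0generale} applies on the whole of $A_k$. It is then convenient to parametrise
$$T_0=\frac{\pi}{k+\epsilon},\qquad \epsilon\in\Bigl(0,\tfrac12\Bigr);$$
a direct computation gives
$$2\pi-NT_0=\frac{2\pi\epsilon}{k+\epsilon},\qquad k^*=\Bigl\lceil\frac{1}{2\epsilon}\Bigr\rceil,\qquad h^*=\Bigl\lceil\frac{1}{2\epsilon}+\frac12\Bigr\rceil.$$

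Since $\epsilon$ is strictly decreasing in $T_0$, both $k^*$ and $h^*$ are non-decreasing piecewise constant functions of $T_0$. Their jumps occur, respectively, when $1/(2\epsilon)$ meets an integer and when $1/(2\epsilon)+\tfrac12$ meets an integer, i.e.\ at $\epsilon=1/(2n)$ and at $\epsilon=1/(2n-1)$ for integers $n\geq 2$. Collecting both families yields $\epsilon=1/p$ with $p\geq 3$ integer, which translates back to $T_0=\mu_{p-2}$. Since the formula for $T_{opt}$ in case $ii)$ of Lemma~\ref{T0generale} is affine in $T_0$ once $k^*$ and $h^*$ are fixed, $T_{opt}(\cdot)$ is continuous on $A_k\setminus\{\mu_m:m\in\N\}$ and its discontinuities are exactly at the $\mu_m$.

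It remains to compute the one-sided limits at each $\mu_m$. Writing $\mu_m=\pi/(k+1/(m+2))$ I would split according to the parity of $m$. For $m$ even, set $n=(m+2)/2$: on a left neighbourhood of $\mu_m$ one finds $k^*=n$, $h^*=n+1$ (so $h^*=k^*+1$), and on a right neighbourhood $k^*=h^*=n+1$. For $m$ odd, set $n=(m+3)/2$: on the left $k^*=h^*=n$, on the right $k^*=n$, $h^*=n+1$. Substituting into the two sub-formulas
$$T_{opt}=\pi+T_0(2kk^*+1)\ \text{if } h^*=k^*+1,\qquad T_{opt}=\pi+T_0(k(2k^*-1)+1)\ \text{if } h^*=k^*,$$
the four sub-cases collapse to
$$\lim_{t\nearrow\mu_m}T_{opt}(t)=\pi+\mu_m(k(m+2)+1),\qquad \lim_{t\searrow\mu_m}T_{opt}(t)=\pi+\mu_m(k(m+3)+1),$$
from which the jump $k\mu_m$ is immediate.

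The main obstacle is the bookkeeping in this last step: the active sub-case in Lemma~\ref{T0generale} (namely $h^*=k^*$ versus $h^*=k^*+1$) swaps across $\mu_m$ in a pattern that itself depends on the parity of $m$, and one must verify that the ceiling computations at $\epsilon=1/(m+2)$ collapse the four sub-cases to a single pair of limits. Apart from this, the argument reduces to routine monotonicity of ceilings and the affine dependence on $T_0$ in Lemma~\ref{T0generale}.
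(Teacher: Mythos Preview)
Your proposal is correct and follows essentially the same approach as the paper. The paper does not prove Proposition~\ref{prop:discontinuita strane2} directly but proves the analogous Proposition~\ref{prop:discontinuita strane3} (for $B_k$, using case $iii)$ of Lemma~\ref{T0generale}) and declares the proof of Proposition~\ref{prop:discontinuita strane2} ``similar''; your argument is exactly the case-$ii)$ analogue of that proof, with the pleasant simplification of the parametrisation $T_0=\pi/(k+\epsilon)$ which makes the ceiling computations for $k^*$ and $h^*$ more transparent than working directly with $T_0/(2\pi-NT_0)$ as the paper does.
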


\begin{proposition}
\label{prop:discontinuita strane3}
For all $k\in 2\N$, the map $T_{opt}(\cdot)$ on each interval $B_k$ admits a countable set of discontinuities in
\begin{equation}
\label{eq:pointofdiscontinuity}
\xi_m:= \frac{\pi}{k+\left(\frac{m+1}{m+2}\right)}\,, \qquad m\in \N\,.
\end{equation}
Furthermore, for each $m\in \N$, 
\begin{align}
\label{eq:lim1}
\lim_{t \nearrow \xi_m} T_{opt}(t)&=  \pi(m+3) + k \xi_m\,,\\
\label{eq:lim2}
\lim_{t \searrow \xi_{m}} T_{opt}(t)&=  \pi(m+2) + k \xi_{m}\,.
\end{align}
In particular, $\lim_{t \nearrow \xi_{m}} T_{opt}(t)-\lim_{t \searrow \xi_{m}} T_{opt}(t)=\pi$.
\end{proposition}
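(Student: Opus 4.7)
The plan is to reduce to case $iii)$ of Lemma~\ref{T0generale} and to track the jumps of the two ceiling functions $l^{*}$ and $q^{*}$ across $\xi_{m}$. Since $k\in 2\N$ and $T_{0}\in B_{k}=(\pi/(k+1),\pi/(k+\tfrac12))$, one checks from \eqref{def:N,j} that $N=2k+1$ (odd) and $j=k$ (even), so case $iii)$ applies. In particular, for $T_{0}\in B_{k}$ one has $(N+1)T_{0}-2\pi=(2k+2)T_{0}-2\pi>0$, and the two relevant quantities in Lemma~\ref{T0generale} become
\begin{equation*}
l^{*}(T_{0})=\Big\lceil\frac{T_{0}}{(2k+2)T_{0}-2\pi}\Big\rceil\,,
\qquad
q^{*}(T_{0})=\Big\lceil\frac{\pi-kT_{0}}{(2k+2)T_{0}-2\pi}\Big\rceil\,.
\end{equation*}

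Next I would substitute $T_{0}=\xi_{m}=\pi/(k+(m+1)/(m+2))$. A direct computation yields $(2k+2)\xi_{m}-2\pi=2\pi/((m+2)k+(m+1))$, and then
\begin{equation*}
\frac{\xi_{m}}{(2k+2)\xi_{m}-2\pi}=\frac{m+2}{2}\,,
\qquad
\frac{\pi-k\xi_{m}}{(2k+2)\xi_{m}-2\pi}=\frac{m+1}{2}\,.
\end{equation*}
Both functions $T_{0}\mapsto T_{0}/((2k+2)T_{0}-2\pi)$ and $T_{0}\mapsto(\pi-kT_{0})/((2k+2)T_{0}-2\pi)$ are strictly decreasing on $B_{k}$, as one sees by computing the derivatives: both equal $-2\pi/[(2k+2)T_{0}-2\pi]^{2}$ up to a positive factor.

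The key step is then the jump analysis at $\xi_{m}$, split into the parities of $m$. If $m$ is even, then $(m+2)/2$ is an integer and $(m+1)/2$ is a half--integer; by monotonicity, $q^{*}$ stays constantly equal to $(m+2)/2$ across $\xi_{m}$, while $l^{*}$ jumps from $(m+2)/2$ (right of $\xi_{m}$) to $(m+2)/2+1$ (left of $\xi_{m}$). Thus to the right of $\xi_{m}$ one has $l^{*}=q^{*}$, giving $T_{opt}=2\pi q^{*}+jT_{0}=\pi(m+2)+kT_{0}$, while to the left one has $l^{*}=q^{*}+1$, giving $T_{opt}=2\pi q^{*}+jT_{0}+\pi=\pi(m+3)+kT_{0}$. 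If $m$ is odd, the roles swap: $(m+1)/2$ is the integer and $l^{*}$ stays constant, while $q^{*}$ jumps; one finds the same two values on the two sides. Passing to the limit $T_{0}\to\xi_{m}$ in both cases yields \eqref{eq:lim1}--\eqref{eq:lim2}, and the identities
\begin{equation*}
\lim_{t\nearrow\xi_{m}}T_{opt}(t)-\lim_{t\searrow\xi_{m}}T_{opt}(t)=\pi\,.
\end{equation*}
Finally, I would note that for $T_{0}\in B_{k}\setminus\{\xi_{m}:m\in\N\}$ neither ceiling jumps (by the monotonicity established above, jumps can occur only at preimages of integer values), so the map $T_{0}\mapsto T_{opt}(T_{0})$ is continuous there; together with Proposition~\ref{prop:discontinuita strane1} this identifies $\{\xi_{m}\}_{m\in\N}$ as precisely the discontinuity set in $B_{k}$. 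The main technical point is the parity case-split, which is delicate because the two sides of $\xi_{m}$ invoke the two different formulas in Lemma~\ref{T0generale}~$iii)$, yet conspire to produce the same pair of one-sided limits.
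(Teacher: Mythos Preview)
Your proof is correct and follows essentially the same route as the paper: reduce to case $iii)$ of Lemma~\ref{T0generale}, exploit that the two ceiling arguments differ by exactly $1/2$ (so their integer-crossings interlace and together give the sequence $\{\xi_m\}$), and split on the parity of $m$ to read off the two formulas for $T_{opt}$ on either side of $\xi_m$. The paper organizes the computation by intervals $(\xi_{m+1},\xi_m)$ rather than by one-sided limits at $\xi_m$, but the content is the same. One small point: your appeal to Proposition~\ref{prop:discontinuita strane1} at the end is misplaced (that proposition concerns $B_k$ only for $k$ odd), but your own argument that neither ceiling jumps away from $\{\xi_m\}$ already suffices, so you can simply drop the reference.
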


We prove Proposition \ref{prop:discontinuita strane3}, the proof of Proposition \ref{prop:discontinuita strane2} being similar.

\begin{proof}[Proof of Proposition \ref{prop:discontinuita strane3}]
To begin, recall that $N(t)=2k+1$ and $j(t)=k$, for each $t\in B_k$. Thus $N$ is odd and $j$ is even and $T_{opt}$ is provided by $iii)$ in Lemma \ref{T0generale}. It is easy to see that the discontinuities of the map $B_k\ni t\mapsto T_{opt}(t)$ are the discontinuities of the maps
\begin{equation*}
q^*(t) = \Big\lceil  \frac{t}{(2k+2)t - 2\pi} -\frac{1}{2} \Big\rceil \,,\qquad
l^*(t) = \Big\lceil \frac{t}{(2k+2)t - 2\pi}  \Big\rceil \,,
\end{equation*}
where we have used \eqref{eq:qstarNdisparijpari}. Thus, the discontinuities of the map $t\mapsto T_{opt}(t)$ on $B_k$ are the solutions of the following equations
\begin{equation}
\label{eq:discontinuity}
\frac{t}{(2k+2)t-2\pi}=\frac{m}{2}+1\,, \quad m\in \N\,.
\end{equation}
With simple computations one obtains the sequence $\{\xi_m\}_{m\in\N}$ in \eqref{eq:pointofdiscontinuity}. It is easy to see that $\{\xi_m\}_{m\in\N}$ is a decreasing sequence. To conclude, we consider two cases:
\begin{itemize}
\item If $m\in 2\0$ and $t\in (\xi_{m+1},\xi_{m})$ (where we set $\xi_0:=\pi/(k+\frac{1}{2})$), one has 
$$
\frac{m}{2} +1<\frac{t}{2(k+1)t-2\pi}<\frac{m+1}{2}+1=\frac{m}{2}+\frac32\,.
$$
Thus $q^*=m/2+1$ and $l^*=m/2+2$. Lemma \ref{T0generale} $iii)$ yields
$$
T_{opt}(t)=2\pi \left(\frac{m}{2}+1\right) + jt +\pi =\pi(m+3)+jt\,.
$$
\item If $m\in 2\0+1$ and $t\in (\xi_{m+1},\xi_{m})$ we can argue as in the previous item. Set $m=2n+1$ for some $n\in\0$. Note that 
$$
n+\frac{3}{2} <\frac{t}{2(k+1)t-2\pi}<n+2\,.
$$
Thus $q^*=l^*=n+2$ and Lemma \ref{T0generale} $iii)$ yields
$$
T_{opt}(t)=2\pi(n+2)+ j t= \pi (m+3)+jt\,.
$$
\end{itemize}
Combining the two cases above, one easily concludes the proof.
\end{proof} 

\section{Multidimensional case}
\label{s:multidimensional proof}
In this section we prove Theorem \ref{t:variable theorem} and its extension to the $BV$ class. We use a result proved by the authors in \cite{ECC} (see Theorem 2.1 there); here we briefly recall it.\\
Let $N$ be an integer and
\begin{itemize}
\item let $\{x_i\}_{i=0,\dots,N}$ be an arbitrary family of points in $\mathbb{R}^d$;
\item let $\{t_i\}_{i=-1,\dots,N}$  be an increasing family of real numbers such that $t_{-1}=0$, and define $T:=t_N$;
\item define 
\begin{align}
\label{Ri}
R_i &=\max\{|x-x_i|\,|\,x\in \overline{\Omega}\}\,, &\qquad&  i=0,\dots,N\,,\\
\label{Ri+1,i}
R_{i,i+1} &= |x_{i+1}-x_i|\,,&\qquad&  i=0,\dots,N-1\,,\\
\Gamma_i &= \{x\in \partial \Omega \,|\, (x-x_i)\cdot \nu >0\}\,, &\qquad& i=0,\dots,N\,.
\end{align}
\end{itemize}
\begin{theorem}
\label{t:multid alternating}
For each real number $T$ such that
\begin{equation}
\label{T alternating}
T>R_N + \sum_{i=0}^{N-1} R_{i,i+1} + R_0\,,
\end{equation}
and each $u$ weak solution to (\ref{omo1})-(\ref{omo2}) with initial data $(u_0,u_1)\in H^1_0(\Omega)\times L^2(\Omega)$ the following observability inequality holds
\begin{equation}
\label{inequality multid alternating}
(\max_{i=0,\dots,N}R_i)\sum_{i=0}^{N}\int_{t_{i-1}}^{t_i}\int_{\Gamma_i} |\partial_{\nu} u|^2 d\mathcal{H}^{d-1} \,dt \geq C_T E_0\,,
\end{equation}
where $C_T =2(T-R_N - \sum_{i=0}^{N-1} R_{i,i+1} - R_0)$.
\end{theorem}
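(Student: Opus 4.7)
The plan is to apply the multiplier identity proved as Lemma~1.8 of \cite{ECC} (recalled in the convex-case proof of Theorem \ref{t:hidden} above) separately on each slab $\Omega\times(t_{i-1},t_i)$ with the distinguished point chosen to be $x_i$, and then sum the $N+1$ resulting identities. For strong data (the weak case follows at the end by density together with Theorem \ref{t:hidden}), this reads
\begin{equation*}
\tfrac12\int_{t_{i-1}}^{t_i}\!\int_{\partial\Omega}(x-x_i)\cdot\nu\,|\partial_\nu u|^2\,d\mathcal{H}^{d-1}\,dt = \bigl[F_i(t)\bigr]_{t_{i-1}}^{t_i}+(t_i-t_{i-1})\,E_0,
\end{equation*}
where $F_i(t):=\int_\Omega u_t(t,x)\bigl((x-x_i)\cdot\nabla u(t,x)+\tfrac{d-1}{2}u(t,x)\bigr)\,dx$. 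Summing over $i=0,\dots,N$, the time pieces $(t_i-t_{i-1})E_0$ collapse to $TE_0$, while the evaluations at the interior nodes telescope into
\begin{equation*}
F_N(T)-F_0(0)+\sum_{i=0}^{N-1}\bigl(F_i(t_i)-F_{i+1}(t_i)\bigr).
\end{equation*}
The key observation is that at every interior time $t_i$ the $\tfrac{d-1}{2}u$ parts of $F_i(t_i)$ and $F_{i+1}(t_i)$ cancel, so each jump reduces to $\int_\Omega u_t(t_i,x)\,(x_{i+1}-x_i)\cdot\nabla u(t_i,x)\,dx$.

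Next I would estimate the left-hand side. Since $(x-x_i)\cdot\nu\le 0$ on $\partial\Omega\setminus\Gamma_i$ by the very definition of $\Gamma_i$, discarding that portion only decreases the integral, and on $\Gamma_i$ one has $(x-x_i)\cdot\nu\le|x-x_i|\le R_i\le\max_j R_j$, yielding
\begin{equation*}
\tfrac12\sum_{i=0}^N\!\int_{t_{i-1}}^{t_i}\!\int_{\partial\Omega}\!(x-x_i)\cdot\nu\,|\partial_\nu u|^2\;\le\;\tfrac{\max_j R_j}{2}\sum_{i=0}^N\!\int_{t_{i-1}}^{t_i}\!\int_{\Gamma_i}|\partial_\nu u|^2.
\end{equation*}
The remaining task is to bound $|F_j(t)|$ and the jump terms from below by multiples of the conserved energy $E_0$. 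For this I would use Cauchy--Schwarz together with the Rellich-type identity
\begin{equation*}
\bigl\|(x-x_j)\cdot\nabla u+\tfrac{d-1}{2}u\bigr\|_{L^2(\Omega)}^{2} = \bigl\|(x-x_j)\cdot\nabla u\bigr\|_{L^2(\Omega)}^{2}-\tfrac{d^2-1}{4}\|u\|_{L^2(\Omega)}^{2},
\end{equation*}
obtained by integrating by parts the cross term $(d-1)\int_\Omega u\,(x-x_j)\cdot\nabla u\,dx=-\tfrac{d}{2}\|u\|_{L^2}^{2}$, valid since $u=0$ on $\partial\Omega$. This gives $\bigl\|(x-x_j)\cdot\nabla u+\tfrac{d-1}{2}u\bigr\|_{L^2}\le R_j\|\nabla u\|_{L^2}$, so by Cauchy--Schwarz and the elementary AM--GM bound $\|u_t\|\|\nabla u\|\le(\|u_t\|^2+\|\nabla u\|^2)/2=E_0$, one obtains $|F_j(t)|\le R_j\,E_0$. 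The same reasoning yields $|F_i(t_i)-F_{i+1}(t_i)|\le R_{i,i+1}\,E_0$.

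Combining all these estimates gives
\begin{equation*}
\tfrac{\max_j R_j}{2}\sum_{i=0}^N\!\int_{t_{i-1}}^{t_i}\!\int_{\Gamma_i}|\partial_\nu u|^2\;\ge\;\Bigl(T-R_N-\sum_{i=0}^{N-1}R_{i,i+1}-R_0\Bigr)E_0,
\end{equation*}
which is the claimed observability inequality after multiplying through by $2$. The main subtlety I expect is precisely the linear-in-$R_j$ control of $F_j$: a naive Cauchy--Schwarz followed by Poincar\'e would introduce a domain-dependent constant and pollute the sharp $C_T$, so the Rellich cancellation above is essential. The extension from strong to weak initial data is then routine via density together with the hidden-regularity Theorem \ref{t:hidden}.
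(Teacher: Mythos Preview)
Your proposal is correct and is exactly the multiplier-method argument of \cite{ECC} (Theorem~2.1 there), which the present paper invokes rather than reproves. The only slip is cosmetic: the cross term $(d-1)\int_\Omega u\,(x-x_j)\cdot\nabla u\,dx$ equals $-\tfrac{d(d-1)}{2}\|u\|_{L^2}^{2}$, not $-\tfrac{d}{2}\|u\|_{L^2}^{2}$, though your final Rellich identity and the resulting bound $|F_j(t)|\le R_jE_0$ are correct.
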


\begin{remark}
\label{r:alternating}
Theorem \ref{t:multid alternating} states that the system is $\Sigma$-observable (see Definition \ref{def:observabilitym}) in any time $T>R_N + \sum_{i=0}^{N} R_{i,i+1} + R_0$ with $\Sigma$ as 
\begin{equation}
\bigcup_{i=0}^{N} \Gamma_i\times (t_{i-1},t_i)\,.
\end{equation}
\end{remark}
In the proof of Theorem \ref{t:variable theorem}, we have to consider a sequence of partitions $\{P_k\,:\,k\in \N\}$ of the interval $[0,T]$, so we will add the upper index $k$ in order to keep trace of the dependence on $P_k$; and for brevity we will set $\Sigma_{\varphi}^k:= \Sigma_{\varphi}^{P_k}$.

With this preparation, we can prove Theorem \ref{t:variable theorem}.

\begin{proof}[Proof of Theorem \ref{t:variable theorem}]
Let $\{t_{-1}^k,\dots,t_{N_k}^k\,:\,0=t_{-1}^k<t_{0}^k<\dots t_{N_k-1}^k<t_ {N_k}^k=T\}$ be the partition $P_k$. We can apply Theorem \ref{t:multid alternating} for $x_i=x_{\varphi,i}^k:=\varphi(t_{i-1}^k)$ and $i=0,\dots,N_k$. Then the exact observability holds with $\Sigma = \Sigma_{\varphi}^k$ provided 
\begin{equation}
\label{T proof}
T > R_{N_k}^k + \sum_{i=0}^{N_k-1} R_{i+1,i}^k + R_0^k\,,
\end{equation}
where $R_i^k,R_{i+1,i}^k$ are as in (\ref{Ri})-(\ref{Ri+1,i}) with $x_i=x_{\varphi,i}^k$ (cf. Remark \ref{r:alternating}). 
We take a closer look at the middle term of the right hand side of  (\ref{T proof}):
\begin{equation}
\label{c1}
\sum_{i=0}^{N_{k}-1} R_{i+1,i}^k = \sum_{i=0}^{N_k} |\varphi(t_i^k) - \varphi(t_{i-1}^k)| - |\varphi(t_{N_{k}}^k) - \varphi(t_{{N_k}-1}^k)|\,.
\end{equation}
Since $\delta(P_k)\searrow 0$, invoking the smoothness hypotheses on $\varphi$, we see that the sum on the right hand side of (\ref{c1}) tends to the length of the curve
\begin{equation*}
L(\varphi):=\int_{0}^T |\varphi'(t)|\,dt\,.
\end{equation*} 
Furthermore, $R_{0}^k \equiv c_0$ since $t_{-1}^k\equiv 0$. By the continuity of $\varphi$ at $t=T$, we have $R_{N_{k}}^k \rightarrow c_T$ and $|\varphi(t_{N_{k}}^k) - \varphi(t_{{N_k}-1}^k)|\searrow 0$, as $k\nearrow \infty$ (recall that $|t_{N_{k}}^k-t_{N_{k}-1}^k|\leq \delta(P_k)\searrow 0$).\\ 
By the previous discussion and (\ref{Tvariable}), one sees that there exist an $M\in \N$ and a positive constant $\delta$ such that
\begin{equation}
\label{step5}
T - R_{N_k}^k - \sum_{i=0}^{N_k-1} R_{i+1,i}^k - R_0^k \geq \delta > 0\,,
\end{equation}
for all $k>M$. By inequality (\ref{inequality multid alternating}) in Theorem \ref{t:multid alternating}, we have
\begin{multline}
\label{estimatevariable}
(\max_{i=0,\dots,N}R_i^k)
\int_{(0,T)\times \partial \Omega} |\partial_{\nu} u|^2 \chi_{\Sigma_{\varphi}^k}\, d\mathcal{H}^{d-1}  \,\otimes \,dt =\\ (\max_{i=0,\dots,N}R_i^k)
\sum_{j=0}^{N_k} \int_{t_{j-1}^k}^{t_j^k} \int_{\Gamma_{\varphi,j}} |\partial_{\nu} u|^2\,d\mathcal{H}^{d-1}\,dt \geq 2\delta E_0\,,
\end{multline}
where $\delta$ is as in (\ref{step5}) and $\chi_A$ is the indicator function of the subset $A\subset \partial \Omega \times (0,T)$. It is easy to see that
\begin{equation}
\label{estimateRi}
\max_{t\in[0,T],x\in \overline{\Omega}}|x-\varphi(t)|\geq (\max_{i=0,\dots,N}R_i^k) \,.
\end{equation}
By (\ref{convergence}), we have at least for a subsequence
\begin{equation}
\label{convergence2}
\chi_{\Sigma_{\varphi}^k}(x,t) \rightarrow \chi_{\Sigma_{\varphi}}(x,t)\,,\qquad d\mathcal{H}^{d-1} \otimes dt - a.e.
\end{equation}
By the estimate in (\ref{estimateRi}), the uniformity of the estimate in (\ref{estimatevariable}), $\partial_{\nu} u \in L^2( \partial \Omega\times (0,T))$ (see Theorem \ref{t:hidden}) and the convergence in (\ref{convergence2}), the observability inequality (\ref{dis observabilitym}) follows by Lebesgue dominated convergence theorem.
\end{proof}

It is of some interest to weaken the hypotheses on the regularity of $\varphi$; see Remark \ref{r:BV} below.\\
For the reader's convenience, we recall that a function $\varphi:[0,T]\rightarrow\mathbb{R}^d$ is said to be of bounded variation (or briefly $\varphi\in BV[0,T]$) if
\begin{equation}
\label{BV def}
V_{0}^T(\varphi):= \sup \Big\{ \sum_{j=0}^N |\varphi(t_j)-\varphi(t_{j-1})| \Big\} <\infty\,,
\end{equation}
where the $\sup$ is taken on all partitions $P:=\{t_{-1},\dots,t_N\,:\,0=t_{-1}<t_0<\dots<t_{N-1}<t_N:=T\}$ of the interval $[0,T]$. 


\begin{theorem}
\label{t:variable BV}
Let $\varphi\in BV[0,T]$ be left continuous at $t=T$. Suppose that there exists a sequence of partitions $\{P_k\,:\,k\in \N\}$ such that $\delta(P_k)\searrow 0$ as $k\nearrow\infty$ and (\ref{convergence}) holds. Moreover, suppose that $T>0$ satisfies
\begin{equation}
\label{T BV}
T> c_0 + V_0^T(\varphi) + c_T\,.
\end{equation}
Then the system is $\Sigma_{\varphi}$-observable, where $\Sigma_{\varphi}$ is defined in (\ref{Sigma phi}).
\end{theorem}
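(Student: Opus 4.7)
The plan is to mimic the strategy of Theorem~\ref{t:variable theorem}, replacing the length $L(\varphi)$ in the crucial bounds by the total variation $V_0^T(\varphi)$; the passage to the limit and the use of Lebesgue dominated convergence are then identical. Given the sequence of partitions $\{P_k\}$ from the hypothesis, I would first fix $k$, write $P_k=\{t_{-1}^k,\dots,t_{N_k}^k\}$ with $t_{-1}^k=0$ and $t_{N_k}^k=T$, and apply Theorem~\ref{t:multid alternating} with the choice of points $x_{\varphi,i}^k:=\varphi(t_{i-1}^k)$ for $i=0,\dots,N_k$. This yields (\ref{inequality multid alternating}) for the set $\Sigma_{\varphi}^k$, provided the threshold condition (\ref{T alternating}) is met.

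The key step is to verify (\ref{T alternating}) uniformly in $k$ for large $k$. Three observations are needed. First, since $t_{-1}^k=0$, one has $R_0^k=c_0$. Second, by the very definition (\ref{BV def}) of total variation,
\[
\sum_{i=0}^{N_k-1} R_{i,i+1}^k \;=\; \sum_{i=0}^{N_k-1}|\varphi(t_i^k)-\varphi(t_{i-1}^k)| \;\le\; V_0^{t_{N_k-1}^k}(\varphi) \;\le\; V_0^T(\varphi),
\]
so this term is controlled uniformly in $k$, and no convergence of the partial sums is required --- just a monotone upper bound. Third, since $\delta(P_k)\searrow 0$ forces $t_{N_k-1}^k\nearrow T$, the assumed left continuity of $\varphi$ at $t=T$ gives $\varphi(t_{N_k-1}^k)\to \varphi(T)$, hence $R_{N_k}^k\to c_T$. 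Combining these three observations with the hypothesis (\ref{T BV}), one finds a $\delta>0$ and an $M\in\N$ such that
\[
T - R_0^k - \sum_{i=0}^{N_k-1} R_{i,i+1}^k - R_{N_k}^k \;\geq\; T - c_0 - V_0^T(\varphi) - R_{N_k}^k \;\geq\; \delta
\]
for all $k\ge M$, which is the uniform analogue of (\ref{step5}) in the proof of Theorem~\ref{t:variable theorem}.

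At this point, Theorem~\ref{t:multid alternating} delivers
\[
\Bigl(\max_{i=0,\dots,N_k} R_i^k\Bigr)\int_{\partial\Omega\times(0,T)}|\partial_\nu u|^2\,\chi_{\Sigma_{\varphi}^k}\,d\mathcal{H}^{d-1}\otimes dt \;\geq\; 2\delta\,E_0,
\]
and the prefactor is dominated by $\sup_{t\in[0,T],\, x\in\overline{\Omega}}|x-\varphi(t)|$, which is finite because $\varphi\in BV[0,T]$ is automatically bounded. The hypothesis (\ref{convergence}) then yields, up to a subsequence, the pointwise $\mathcal{H}^{d-1}\otimes\mathcal{L}^1$-a.e.\ convergence $\chi_{\Sigma_{\varphi}^k}\to\chi_{\Sigma_{\varphi}}$. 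Using the hidden regularity $\partial_\nu u\in L^2(\partial\Omega\times(0,T))$ from Theorem~\ref{t:hidden} to furnish a dominating function, Lebesgue dominated convergence passes the inequality to the limit and produces (\ref{dis observabilitym}) for $\Sigma_{\varphi}$, which is the claim.

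The main (and essentially only) obstacle compared with the smooth case is the third observation above: without the left continuity of $\varphi$ at $t=T$, a jump there would break the convergence $R_{N_k}^k\to c_T$ and the threshold (\ref{T BV}) would no longer be available uniformly in $k$. Everything else is a direct transcription of the proof of Theorem~\ref{t:variable theorem}, with the clean feature that the $BV$ hypothesis enters only through the monotone inequality $\sum R_{i,i+1}^k \le V_0^T(\varphi)$ and not through any refinement-type convergence of polygonal approximations.
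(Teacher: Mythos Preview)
Your proposal is correct and follows essentially the same approach as the paper: apply Theorem~\ref{t:multid alternating} on each partition $P_k$, bound $\sum_i R_{i,i+1}^k$ uniformly by $V_0^T(\varphi)$, use left continuity at $t=T$ to get $R_{N_k}^k\to c_T$, and then pass to the limit via dominated convergence exactly as in Theorem~\ref{t:variable theorem}. Your write-up is in fact more detailed than the paper's own proof, which simply notes that the argument is an ``easy revision'' of Theorem~\ref{t:variable theorem} and highlights the same three points (the role of left continuity, the $BV$ bound on the sum, and $R_0^k\equiv c_0$).
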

\begin{proof}
The proof is an easy revision of the proof of Theorem \ref{t:variable theorem}. To avoid repetitions, we give only few remarks to stress the usefulness of the assumptions. To do so, we freely use the notation introduced in the proof of Theorem \ref{t:variable theorem} above.\\ 
The left continuity at $t=T$ is required to ensure that $R_{N^k}^k \rightarrow c_T$  as $k\nearrow\infty$. Moreover, since $\varphi\in BV[0,T]$, one has
\begin{equation}
\label{eq:BVproofTk}
T - R_{N_k}^k - \sum_{i=0}^{N_k-1} R_{i+1,i}^k - R_0^k \geq T- R_{N_k}^k- V_0^T(\varphi) -c_0  \,,
\end{equation}
since $\sum_{i=0}^{N_k-1} R_{i+1,i}^k= \sum_{i=0}^{N_k-1} |\varphi(t_{i-1}^k)-\varphi(t_{i}^k)|\leq V_0^T(\varphi)$ for all $k\in \N$ and $c_0\equiv R_0^k$ (recall that $t^k_{-1}\equiv 0$).
By \eqref{T BV}, \eqref{eq:BVproofTk} and $\lim_{k\rightarrow \infty}R_{N^k}^k =c_T$, then one sees that there exist $M\in \N$ and $\delta>0$ such that
$$
T - R_{N_k}^k - \sum_{i=0}^{N_k-1} R_{i+1,i}^k - R_0^k \geq \delta>0 \,, \quad \forall k>M\,.
$$
By the previous considerations, the conclusion follows as in \eqref{estimatevariable}-\eqref{convergence2}.
\end{proof}

We conclude this section with some remarks.

\begin{remark}
\label{r:BV}
Note that Theorem \ref{t:variable BV} is an extension of both Theorems \ref{t:variable theorem} and \ref{t:multid alternating}. Indeed, under the hypotheses of Theorem \ref{t:variable theorem} (thus $\varphi\in C([0,T];\mathbb{R}^d)\cap C^1((0,T);\mathbb{R}^d)$ with finite length), we have $\varphi\in BV[0,T]$ and $V_0^T(\varphi)=\int_0^T|\varphi'(t)|\,dt$. Thus the conclusion of Theorem \ref{t:variable theorem} follows by Theorem \ref{t:variable BV}.\\
Next we show that Theorem \ref{t:multid alternating} can be recovered from Theorem \ref{t:variable BV}. Let $P:=\{t_{-1},\dots,t_N\,:\,0=t_{-1}<t_0<\dots<t_{N-1}<t_N:=T\}$ be as in Theorem \ref{t:multid alternating}. Define the piecewise constant function $\tilde{\varphi}(t):=x_i$ on $ [t_{i-1},t_i)$ for $i=0,\dots,N$ and $\tilde{\varphi}(T)=x_N$. Then $\tilde{\varphi}$ is left continuous at $t=T$ and $V_0^T(\tilde{\varphi})=R_0+\sum_{i=0}^{N-1} R_{i,i+1}+R_N$. Lastly, set $P_k\equiv P$ for each $k\in \N$. Then, the condition \eqref{condition} is verified and the claim follows.

 For a discussion of the optimality of the lower bound in \eqref{T BV} see Example \ref{ex:optimality} below.
\end{remark}

\begin{remark}
Here we are not concerned with the optimal smoothness assumption on $\varphi$ in Theorem \ref{t:variable BV}. Our hypothesis $\varphi\in BV[0,T]$ is suggested by the lower bound in \eqref{T alternating}. Indeed, let $P=\{t_{-1},\dots,t_N\,:\,0=t_{-1}<t_0<\dots<t_{N-1}<t_N=T\}$ be a partition  of the interval $[0,T]$ and let $x_j=\varphi(t_{j-1})$ for $j=0,\dots,N$ (cf. \eqref{xphij}). Then, the middle term on the RHS of \eqref{T alternating} is naturally bounded by the total variation of the function $\varphi$ in the interval $[0,T]$. This property has been exploited in the proofs of Theorems \ref{t:variable theorem}, \ref{t:variable BV}. 
\end{remark}

\section{Applications and future directions}
\label{s:applications}
We look here at some applications of Theorems \ref{t:variable theorem} and \ref{t:variable BV}. In order to state the first result we fix some notations:
\begin{itemize}
\item let $B(0,1):=\{x\in \mathbb{R}^3\,:\,|x|<1\}$ be the unit ball with center in the origin, and set $S^2:=\partial B(0,1)=\{x\in \mathbb{R}^3\,:\,|x|=1\}$;
\item let $\Phi:[0,2\pi]\times [0,\pi] \rightarrow S^2$ be the standard parametrization of the sphere; i.e.
\begin{equation*}
\Phi(\theta,\theta')=(\cos\theta\,\sin\theta',\sin \theta\,\sin \theta', \cos \theta')\,;
\end{equation*}
\item let $\eta$ be a real number in $[0,2\pi]$, then define
\begin{equation*}
R(\eta) = \left( \begin{array}{ccc}
1\,&0\,&0\,\\
0\,&\cos \eta\,& -\sin \eta\,\\
0\,&\sin \eta\,& \cos \eta\,
\end{array}\right)\,.
\end{equation*}
\end{itemize} 
\begin{corollary}
\label{c:variable circle}
Let $\Omega=B(0,1)$ and let $\alpha\in \mathbb{R}^+$ be a positive real number, such that
\begin{equation}
\label{alpha bound}
\alpha < \frac{\pi}{2(1+\sqrt{2})+\pi \sqrt{2}}\,.
\end{equation}
Then the system is $\mathcal{S}$-observable in time $T=\pi/\alpha$ with $\mathcal{S}=\cup_{t\in(0,T)} S(t)\times \{t\}$ and
\begin{align}
\label{Gammat sphere}
S(t)&= R\left(\alpha\,t-\frac{\pi}{4}\right)S\,,\\ 
S&=\Big\{ \Phi(\theta,\theta')\subset S^2\,\Big|\, \theta \in \left(0,2\pi\right),\,\theta'\in \left(\frac{\pi}{4},\pi\right) \Big\}\,.
\end{align}
\begin{proof}
Consider 
\begin{equation}
\label{eq:phiapplication}
\varphi(t)=\sqrt{2}(0,\cos(\pi/4+\alpha t),\sin (\pi/4+\alpha t))\,, \quad t\in [0,\pi/\alpha]\,.
\end{equation} 
One may check that $\Gamma_{\varphi}(t)=S(t)$ for each $t\in (0,T)$, where the $S(t)$ is defined in (\ref{Gammat sphere}). By construction the length of $\varphi$ is equal to $\sqrt{2}\pi$ (note that it is independent of $\alpha$). Then Theorem \ref{t:variable theorem} ensures the $\mathcal{S}$-exact observability provided:
\begin{equation*}
\frac{\pi}{\alpha}=T> c_0 + L(\varphi)+c_T=2(1+\sqrt{2})+\sqrt{2}\pi\,;
\end{equation*}
where we have used that $c_0=c_T=1+\sqrt{2}$. Then the previous inequality is equivalent to \eqref{alpha bound}. This concludes the proof.
\end{proof}
\end{corollary}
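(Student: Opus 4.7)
The plan is to realize the observability set $\mathcal{S}$ as $\Sigma_\varphi$ for a single well-chosen smooth curve $\varphi:[0,\pi/\alpha]\to\mathbb{R}^3$, and then invoke Theorem \ref{t:variable theorem}.

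First I would exploit the elementary geometry of the unit ball. Since $\nu(x) = x$ on $S^2$, for any $p\in\mathbb{R}^3$ the set $\{x\in S^2 : (x-p)\cdot\nu > 0\}$ equals $\{x\in S^2 : p\cdot x < 1\}$; when $|p| = \sqrt{2}$ this is precisely the complement on $S^2$ of the closed spherical cap of angular radius $\pi/4$ centered at $p/|p|$. Since $S$ is such a complement around the north pole and $S(t) = R(\alpha t - \pi/4)S$, this forces $\varphi(t)/\sqrt{2} = R(\alpha t - \pi/4)(0,0,1)$. A short trigonometric identification shows that the explicit choice
\begin{equation*}
\varphi(t) = \sqrt{2}\bigl(0,\, \cos(\pi/4 + \alpha t),\, \sin(\pi/4 + \alpha t)\bigr),\qquad t\in[0,\pi/\alpha],
\end{equation*}
meets this requirement, so $\Gamma_\varphi(t) = S(t)$ for every $t$ and hence $\Sigma_\varphi = \mathcal{S}$.

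Next I would check the quantitative hypothesis (\ref{Tvariable}). The curve $\varphi$ parametrizes a circle of radius $\sqrt{2}$ at constant angular speed $\alpha$, hence $|\varphi'(t)| \equiv \sqrt{2}\alpha$ and $L(\varphi) = \sqrt{2}\pi$. Since $|\varphi(0)| = |\varphi(T)| = \sqrt{2}$, the farthest point of $\overline{B(0,1)}$ from $\varphi(i)$ is the antipode on $S^2$ of $\varphi(i)/\sqrt{2}$, giving $c_0 = c_T = 1 + \sqrt{2}$. The condition (\ref{Tvariable}) then becomes $\pi/\alpha > 2(1+\sqrt{2}) + \sqrt{2}\pi$, which is exactly (\ref{alpha bound}).

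The only remaining ingredient is the approximation property (\ref{convergence}), and I expect this to be the sole point requiring some care. Taking uniform partitions $P_k$ with $\delta(P_k) \searrow 0$, on each slab $(t_{j-1}^k, t_j^k) \times \partial\Omega$ the symmetric difference $\Gamma_\varphi(t) \Delta \Gamma_{\varphi,j}^k$ is a thin annular band on $S^2$ around the boundary circle of the cap $\Gamma_\varphi(t)$, whose width is controlled by $|\varphi(t) - \varphi(t_{j-1}^k)| \leq \sqrt{2}\alpha\,\delta(P_k)$, uniformly in $j$ and $t$. Summing in $j$ and integrating in $t$ then yields $(\mathcal{H}^2 \otimes \mathcal{L}^1)(\Sigma_\varphi \Delta \Sigma_\varphi^{P_k}) = O(\delta(P_k)) \to 0$. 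With (\ref{convergence}) verified, Theorem \ref{t:variable theorem} delivers the $\mathcal{S}$-observability in time $T = \pi/\alpha$.
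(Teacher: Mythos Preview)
Your proof is correct and follows essentially the same route as the paper: choose the same curve $\varphi(t)=\sqrt{2}(0,\cos(\pi/4+\alpha t),\sin(\pi/4+\alpha t))$, identify $\Gamma_\varphi(t)$ with $S(t)$, compute $L(\varphi)=\sqrt{2}\pi$ and $c_0=c_T=1+\sqrt{2}$, and apply Theorem~\ref{t:variable theorem}. You are in fact more thorough than the paper's proof, which simply asserts that ``one may check'' $\Gamma_\varphi(t)=S(t)$ and does not address condition~(\ref{convergence}) inside the proof itself (the paper handles that point separately in a subsequent remark via the continuity property~(\ref{Gamma continuity})).
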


\begin{remark}
We make some comments on Corollary \ref{c:variable circle} and we employ the notation of such corollary; in particular here $\varphi$ is as in \eqref{eq:phiapplication}.
\begin{itemize}
\item The condition \eqref{alpha bound} can be interpreted as the requirement that the mapping ``$t\mapsto \Gamma_{\varphi}(t)$" does not vary too fast in time. More specifically, $\alpha$ plays the role of the ``angular velocity" of rotation of the subset $S$; thus small $\alpha$ implies small velocity of rotation. We are not aware if this result is sharp, but this behaviour seems reasonable in view of geometrical optics; see for instance \cite{Labeau-Rauch,Variable}.
\item It is routine to prove that for each $\varepsilon>0$ there exists $\delta(\varepsilon)>0$ such that
\begin{equation}
\label{Gamma continuity}
\mathcal{H}^{d-1}(\Gamma_{\varphi}(t)\Delta \Gamma_{\varphi}(t')) < \varepsilon\,, \qquad\text{if}\,\, |t-t'|<\delta(\varepsilon)\,.
\end{equation}
The condition in (\ref{Gamma continuity}) can be viewed as a kind of continuity of the subset $\Gamma_{\varphi}(t)$ on the variable $t$, but this cannot be expected in cases when $\Omega$ is not smooth; see Example \ref{ex} below.
\end{itemize}
\end{remark}

The second item in the previous remark gives us the opportunity of pointing out the next observation. 

\begin{remark}
Let $\Omega$ and $\varphi$ be such that the continuity condition (\ref{Gamma continuity}) holds (e.g.  $B(0,1)\subset \mathbb{R}^3$ and $\varphi$ as in \eqref{eq:phiapplication}), then there exists a sequence of partitions $\{P_k\,:\,k\in \N\}$ for which the condition (\ref{convergence}) holds. To prove the claim, let $\varepsilon=1/k$ and let $\delta_k>0$ be such that the condition \eqref{Gamma continuity} holds true. Let $ \N\ni \K>1/\delta_k$, define $t_j^k:=(j+1)T/\K$ for $j=-1,\dots,\K-1$ and $P_k:=\{t_{-1}^k,\dots,t_{\K-1}^k\}$. Then, $\delta(P_k)\searrow 0$ and
\begin{align*}
&\lim_{k\rightarrow\infty}(\mathcal{H}^{d-1}\otimes \mathcal{L}^1)(\Sigma_{\varphi}\Delta \Sigma_{\varphi}^{P_k})
 \\&\leq \lim_{k\rightarrow\infty}
\sum_{i=0}^{K-1} (t_i^k-t_{i-1}^k) \sup_{t\in (t_{i-1}^k,t_i^k)}\mathcal{H}^{d-1}(\Gamma_{\varphi}(t)\Delta \Gamma_{\varphi}(t_{i-1}^k))
 \leq
\lim_{k\rightarrow\infty}\frac{1}{k} T=0\,.
\end{align*}
Therefore, in Theorem \ref{t:variable BV}, assumption \eqref{Gamma continuity} can replace \eqref{convergence}.
\end{remark}

In the next example we study in some details an application in which \eqref{Gamma continuity} does not hold.

\begin{example}
\label{ex}
Let $\Omega=(0,1)^2$ be the square with unit side length and let $\beta$ be a positive number; then consider the curve
\begin{equation}
\label{varphi square}	
\varphi(t)=\left(2, - \frac{1}{2} + \beta t\right)\,, \qquad t\in \left[0,\frac{1}{\beta}\right]\,.
\end{equation}
By definition (see (\ref{Sigma phi})) the observability set $\Sigma_{\varphi}\subset \partial (0,1)^2\times (0,1/\beta)$ can be written as
\begin{equation}
\label{sigma square}
\Sigma_{\varphi}= \left(l_0 \times\left(0,\frac{1}{2\beta} \right] \right)\cup \left( l_1 \times \left(\frac{1}{2\beta},\frac{1}{\beta}\right)\right)\,,
\end{equation}
where
\begin{align*}
l_0 &= \{(x,y)\,|\, x=0\, \text{and}\,\, y \in (0,1)\,\,, \text{or}\,\, y=1\,\text{and}\,\, x\in (0,1)\}\,,\\
l_1 &= \{(x,y)\,|\, x=0\, \text{and}\,\, y \in (0,1)\,\,, \text{or}\,\, y\in \{0,1\}\,\text{and}\,\, x\in (0,1)\}\,.
\end{align*}

Furthermore, for any $t,t'\in (0,\pi)$ such that $t>1/2>t'$ we have
\begin{equation*}
\mathcal{H}^{d-1} (\Gamma_{\varphi}(t)\Delta \Gamma_{\varphi}(t')) = \mathcal{H}^{d-1}(\{(x,y)\,|\, y=1\,,x\in(0,1)\})=1\,;
\end{equation*}
in this case we lose the continuity condition in (\ref{Gamma continuity}).
\end{example}

Next, we note that Theorem \ref{t:multid alternating} (or Theorem \ref{t:variable BV}) implies the following result (for the case of a circle see \cite{ECC} Section 4).
\begin{corollary}[$N$-times alternating control]
\label{c:square alternating N}
Let $T$ and $N$ be respectively a positive real and integer number, such that
\begin{equation}
\label{T square alternating}
T>\sqrt{2}(N+2)\,,
\end{equation}
and let $\{t_i\}_{i=0,\dots,N-1}$ be any increasing finite sequence of $(0,T)$. Then the system is $ \Sigma$-observable for $\Omega=(0,1)^2$ and
\begin{equation*}
\Sigma =\left( \bigcup_{i=0,\,i\in 2\mathbf{N}}^N d_0 \times (t_{j-1},t_j)\right)\cup \left( \bigcup_{i=0,\,i\in 2\mathbf{N}+1}^N d_1 \times (t_{j-1},t_j)\right)\,,
\end{equation*}
where
\begin{align*}
d_0 &=\Big\{(x,y)\in \partial \Omega \,\Big|\,x=0\,\,\text{and}\,\,y\in(0,1)\,,\,\, \text{or}\,\,y=0\,\,\text{and}\,\,x\in(0,1) \Big\},\\
d_1 &=\Big\{(x,y)\in \partial \Omega \,\Big|\,x=1\,\,\text{and}\,\,y\in(0,1)\,,\,\, \text{or}\,\,y=1\,\,\text{and}\,\,x\in(0,1) \Big\},\\
\end{align*}
\end{corollary}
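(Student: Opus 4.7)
The plan is to apply Theorem \ref{t:multid alternating} directly, with a judicious choice of the points $\{x_i\}_{i=0}^N$ that alternate between two opposite vertices of $\overline{\Omega}$. Specifically, I would take $x_i=(1,1)$ when $i$ is even and $x_i=(0,0)$ when $i$ is odd, and augment the given sequence with $t_{-1}:=0$ and $t_N:=T$, so the framework of Theorem \ref{t:multid alternating} applies verbatim. The choice of corner points is forced by the requirement that $\Gamma_i$, as defined there, coincide with $d_0$ or $d_1$.

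The central identification step is to check that $\Gamma_i=d_0$ for even $i$ and $\Gamma_i=d_1$ for odd $i$. This is an edge-by-edge computation on $\partial(0,1)^2$, made trivial by the fact that on each of the four open sides the outward normal $\nu$ is constant. For $x_i=(1,1)$, the inner product $(x-x_i)\cdot\nu$ equals $+1$ on the left and bottom edges and $0$ on the right and top edges, so $\Gamma_i=d_0$; symmetrically $x_i=(0,0)$ gives $\Gamma_i=d_1$. The four corner points form a set of $\mathcal{H}^1$-measure zero, so the fact that $\nu$ is not defined there is immaterial.

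Next, I would compute the geometric constants entering \eqref{T alternating}. Both corners are vertices of the unit square, so $R_i=\max_{x\in\overline{\Omega}}|x-x_i|=\sqrt{2}$ for every $i$; moreover, since consecutive $x_i$ are always opposite vertices of the square, $R_{i,i+1}=|(1,1)-(0,0)|=\sqrt{2}$ for every $i=0,\dots,N-1$. Therefore
\begin{equation*}
R_N+\sum_{i=0}^{N-1}R_{i,i+1}+R_0=\sqrt{2}+N\sqrt{2}+\sqrt{2}=\sqrt{2}(N+2),
\end{equation*}
which is precisely the lower bound on $T$ in \eqref{T square alternating}. Theorem \ref{t:multid alternating} then delivers the observability inequality on $\bigcup_{i=0}^N \Gamma_i\times(t_{i-1},t_i)=\Sigma$, which is the desired conclusion.

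The only mild technical point is that $\Omega=(0,1)^2$ is convex but not $C^2$, so one must check that Theorem \ref{t:multid alternating} still applies. This is not an obstacle: the underlying hidden regularity result (Theorem \ref{t:hidden}) and the multiplier identity \eqref{hidden1} both hold for convex $\Omega$, as already exploited in the proof of Theorem \ref{t:hidden}. No further work is needed beyond the choice of corner points and the arithmetic above; the strength of Theorem \ref{t:multid alternating} is that the times $t_0,\dots,t_{N-1}$ may be chosen arbitrarily within $(0,T)$, which is reflected in the statement.
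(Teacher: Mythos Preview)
Your proposal is correct and follows precisely the paper's approach: apply Theorem \ref{t:multid alternating} with $x_i=(1,1)$ for $i$ even and $x_i=(0,0)$ for $i$ odd, which yields $\Gamma_i=d_0$ or $d_1$ respectively and $R_i=R_{i,i+1}=\sqrt{2}$, whence the threshold $\sqrt{2}(N+2)$. Your write-up simply makes explicit the edge-by-edge verification and the arithmetic that the paper leaves to the reader.
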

\begin{proof}
Apply Theorem \ref{t:multid alternating} with $x_i\equiv x_0=(1,1)$ if $i$ is even or $x_i\equiv x_1=(0,0)$ otherwise.
\end{proof}
For a general discussion on the choice of the $\{t_i\}_{i=0,\dots,N-1}$ see \cite[Subsection 2.1]{ECC}. Let us specialize Corollary \ref{c:square alternating N} to the case 
\begin{equation}
\label{tk alternating}
t_{k}=(k+1)T_0\,,\qquad k=-1,\dots,N\,,
\end{equation}
thus $T:=t_N =(N+1)T_0$. From (\ref{T square alternating}) we obtain
\begin{equation*}
(N+1) T_0 > \sqrt{2}(N+2)\,,
\end{equation*}
that leads us to a lower bound for $T_0$
\begin{equation}
\label{T lower bound}
T_0 >\sqrt{2}\,\, \frac{N+2}{N+1}>\sqrt{2}\,.
\end{equation}
In the situation just introduced, we note that:
\begin{itemize}
\item In the fixed case for the square (e.g. the control acts only on the subset $d_0$ for each time) the controllability holds for any $T>2\sqrt{2}$, by this for $\sqrt{2}<T_0\leq 2\sqrt{2}$ Corollary \ref{c:square alternating N} cannot be deduced by the fixed controllability case.
\item The lower bound in (\ref{T lower bound}) is inherent in the application of the multiplier method in the proof of the observability inequality. In the one dimensional case instead we can allow for arbitrarily small $T_0$ (see Subsection \ref{ss:costantrate}). The one dimensional case suggests that the limitation in (\ref{T lower bound}) is due to methodology of proof and not to an intrinsic obstruction of the problem.\\
It is of our interest to know if there exists a sequence $\{T_{0,h}\}_{h\in \N}$, such that $T_{0,h}\searrow 0$ as $h\nearrow \infty$ and for all $h\in \N$ the system of Corollary \ref{c:square alternating N}  is exactly observable.
\end{itemize}
Up to now we have not discussed the optimality of the lower bound in (\ref{Tvariable}) provided in Theorem \ref{t:variable theorem}; or more generally the optimality of (\ref{T BV}) in Theorem \ref{t:variable BV}. The next example shows that the lower bound for $T$ in (\ref{T BV}) is not optimal in general.
\begin{example}
\label{ex:optimality}
Consider $\Omega=(0,1)^2$, take any $\varphi:[0,T]\rightarrow \mathbb{R}^2$ continuous and piecewise differentiable of \textit{positive length}, such that $\varphi((0,T))\subset \Omega$ and $\varphi(0)=\varphi(T)=0$. From the convexity of $\Omega$, it follows that
\begin{equation*}
\Gamma_{\varphi}(t) \equiv \partial \Omega\,, \qquad \forall t\in (0,T)\,.
\end{equation*}
So (\ref{T BV}), or equivalently (\ref{Tvariable}), provides 
\begin{equation}
\label{square non optimal}
T> 2\sqrt{2} + L(\varphi)\,.
\end{equation}
It is known (see \cite{multiplier} or \cite{Lions}) that the system is exactly observable for any $T>\sqrt{2}$, so the inequality in (\ref{square non optimal}) is not optimal.
\end{example}
Besides this, it is unknown if the lower bound \eqref{T BV} in Theorem \ref{t:variable BV} is optimal for specific domain $\Omega$ and observability set $\Sigma$; e.g. the ones given in Corollary \ref{c:variable circle}.

\section{Conclusion}
In this article we extend some classical results concerning the exact boundary observability of the wave equation.\\ 
In the one dimensional case, we provide a complete treatment of the observability problem. Indeed, Theorem \ref{general theorem} and Corollary \ref{corollaryJ} completely characterize the boundary observability in terms on how the subsets of observation cover the one dimensional sphere $S^1$; see the condition in (\ref{general condition}) or in (\ref{c:eq corollaryJ}).\\
In the multidimensional case, we work out sufficient conditions for exact boundary observability; see Theorem \ref{t:variable theorem} or more generally Theorem \ref{t:variable BV}. The proofs of these results are based on the multiplier method (see \cite{Lions} or \cite{multiplier}), which is used to prove the observability in case of constant observation, i.e. when the observation subset is $ \Sigma \equiv \Gamma \times (0,T)$ for some $\Gamma\subset \partial \Omega \times (0,T)$. Our results extends the classical ones even in the case of piecewise constant subset of observation (see Remark \ref{r:BV}).\\
The applicability of our results is exemplified in Section \ref{s:applications}, where we also list some perspective developments. We recall here the issues of alternating control with arbitrarily small switch time $T_0$, and the optimality of the controllability time $T$, both in the multidimensional case.

\paragraph{\textbf{Acknowledgement}} We are indebted to the anonymous referees for their detailed and very helpful comments, which led to significant corrections, pointing out mistakes in the original proof of Lemma \ref{T0generale}.


\begin{thebibliography}{10}
\bibitem{ECC}
 A. Agresti, D. Andreucci, and P. Loreti, \textit{Variable Support Control for the Wave Equation - A Multiplier Approach}, In Proceedings of the 15th International Conference on Informatics in Control, Automation and Robotics - Volume 1: ICINCO, pages 33-42.
\bibitem{Labeau-Rauch}
C. Bardos, G. Lebeau, and J. Rauch, \textit{Sharp sufficient conditions for
the observation, control and stabilization of waves from the boundary}, SIAM
J. Control. Optim., 30, 1992.
\bibitem{Carcaterra2}
A. Carcaterra, G. Graziani, G. Pepe, and N. Roveri, \textit{Cable oscillation in a streamflow: art in the Tiber}, Proceedings of the 9th International Conference on Structural Dynamics, EURODYN 2014, ISBN: 978-972-752-165-4, 2014.
\bibitem{Evans}
L. C. Evans, and R. F. Gariepy, \textit{Measure Theory and Fine Properties of Functions, Revised Edition}, CRC Press, 2015.
\bibitem{Grisvard} 
P. Grisvard, \textit{Elliptic Problems in Nonsmooth Domains}, vol. 24 of Monographs and Studies in Mathematics, Pitman, Boston, MA, 1985.
\bibitem{multiplier}
V. Komornik, \textit{Exact controllability and stabilization. The Multiplier Method}, Research in Applied Mathematics, Masson, Paris, 1994.
\bibitem{Fourier}
V.~Komornik, and P.~Loreti,
\newblock \textit{Fourier Series in Control Theory}, Springer Monograph in Mathematics, Springer-Vergal, New York, 2005.
\bibitem{Variable}
J. Le Rousseau, G. Lebeau, P. Terpolilli and E. Tr\'{e}lat, \textit{Geometric control condition for the wave equation with a time-dependent observation domain}, Analysis \& PDE, Vol. 10 (2017), No. 4, 983-1015.
\bibitem{Hidden}
I. Lasiecka, and R. Triggiani, \textit{Regularity of hyperbolic equations under
$L^2(0,T;L^2(\Gamma))$ boundary terms}, Appl. Math. and Optimiz. 10, 275–
286, 1983.
\bibitem{Lions}
J.L. Lions, \textit{Contr\^{o}labilit\'e exacte, perturbations et stabilisation de syst\`{e}mes distribu\'es. Tome 1-2}, vol. 8 of Recherches en Math\'ematiques Appliqu\'ees, Masson, Paris, 1988.
\bibitem{Pazy}
A. Pazy, \textit{Semigroups of Linear Operators and Applications to Partial Differential Equations}, vol. 44 of Applied Mathematical Sciences, Springer-Vergal, New York, 1983.
\end{thebibliography}
\end{document}